\newcommand{\myauthor}{Benjamin Antieau and Daniel Bragg}
\newcommand{\mytitle}{Derived invariants from topological Hochschild homology}
\title{\mytitle}
\author{Benjamin Antieau and Daniel Bragg}
\date{\today}
\newcommand*{\isor}[2]{\arrow[#1,"\rotatebox{90}{\(\sim\)}","#2"']}
\definecolor{todo}{rgb}{1,0,0}
\definecolor{conditional}{rgb}{0,1,0}
\definecolor{e-mail}{rgb}{0,.40,.80}
\definecolor{reference}{rgb}{.20,.60,.22}
\definecolor{mrnumber}{rgb}{.80,.40,0}
\definecolor{citation}{rgb}{0,.40,.80}
\let\oldmarginpar\marginpar
\renewcommand\marginpar[1]{\-\oldmarginpar[\raggedleft\footnotesize #1]%
{\raggedright\footnotesize #1}}
\newcommand*{\defeq}{\stackrel{\text{def}}{=}}
\renewcommand{\mathscr}[1]{\mathcal{#1}}
\newcommand{\Cscr}{\mathcal{C}}
\newcommand{\Dscr}{\mathcal{D}}
\newcommand{\Oscr}{\mathcal{O}}
\newcommand{\Pscr}{\mathcal{P}}
\newcommand{\D}{\mathrm{D}}
\newcommand{\E}{\mathrm{E}}
\newcommand{\F}{\mathrm{F}}
\renewcommand{\H}{\mathrm{H}}
\newcommand{\K}{\mathrm{K}}
\newcommand{\R}{\mathrm{R}}
\newcommand{\CC}{\mathds{C}}
\newcommand{\FF}{\mathds{F}}
\newcommand{\PP}{\mathds{P}}
\newcommand{\QQ}{\mathds{Q}}
\renewcommand{\SS}{\mathds{S}}
\newcommand{\ZZ}{\mathds{Z}}
\newcommand{\gr}{\mathrm{gr}}
\newcommand{\crys}{\mathrm{crys}}
\newcommand{\cyc}{\mathrm{cyc}}
\newcommand{\dR}{\mathrm{dR}}
\newcommand{\dRW}{\mathrm{dRW}}
\newcommand{\cris}{\mathrm{crys}}
\newcommand{\CycSp}{\mathbf{CycSp}}
\newcommand{\un}{\mathrm{un}}
\DeclareMathOperator{\dlog}{dlog}
\newcommand{\heart}{\heartsuit}
\DeclareMathOperator{\tors}{tors}
\newcommand{\im}{\mathrm{im}}
\renewcommand{\geq}{\geqslant}
\renewcommand{\leq}{\leqslant}
\newcommand{\Cart}{\mathrm{Cart}}
\newcommand{\Dieu}{\mathrm{Dieu}}
\DeclareMathOperator{\Ext}{Ext}
\newcommand{\HC}{\mathrm{HC}}
\newcommand{\THH}{\mathrm{THH}}
\newcommand{\HP}{\mathrm{HP}}
\newcommand{\TP}{\mathrm{TP}}
\newcommand{\TR}{\mathrm{TR}}
\newcommand{\HH}{\mathrm{HH}}
\DeclareMathOperator{\Pic}{Pic}
\DeclareMathOperator{\Br}{Br}
\DeclareMathOperator{\Dom}{Dom}
\newcommand{\Perfscr}{\Pscr\mathrm{erf}}
\newcommand{\Gm}{\mathds{G}_{m}}
\newcommand{\et}{\mathrm{\acute{e}t}}
\DeclareMathOperator{\Spec}{Spec}
\DeclareMathOperator{\red}{red}
\newcommand{\we}{\simeq}
\newcommand{\iso}{\cong}
\theoremstyle{plain}
\newtheorem{theorem}{Theorem}[section]
\newtheorem*{theorem*}{Theorem}
\newtheorem{lemma}[theorem]{Lemma}
\newtheorem{proposition}[theorem]{Proposition}
\newtheorem{corollary}[theorem]{Corollary}
\newtheorem*{corollary*}{Corollary}
\theoremstyle{plain}
\theoremstyle{definition}
\newtheoremstyle{named}{}{}{\itshape}{}{\bfseries}{.}{.5em}{#1 \thmnote{#3}}
\theoremstyle{named}
\theoremstyle{definition}
\newtheorem{definition}[theorem]{Definition}
\newtheorem{notation}[theorem]{Notation}
\newtheorem{example}[theorem]{Example}
\newtheorem*{example*}{Example}
\newtheorem{question}[theorem]{Question}
\newtheorem*{question*}{Question}
\newtheorem{remark}[theorem]{Remark}
\begin{document}

\maketitle

\begin{abstract}
    \noindent
    We consider derived invariants of varieties in positive characteristic
    arising from topological Hochschild homology. Using theory developed by Ekedahl and Illusie--Raynaud in their study of the slope spectral sequence, we examine the behavior under derived equivalences of various $p$-adic quantities related
    to Hodge--Witt and crystalline cohomology groups, including slope
    numbers, domino numbers, and Hodge--Witt numbers. As a consequence, we obtain restrictions on the Hodge numbers of derived equivalent varieties, partially extending results of Popa--Schell to positive characteristic.
    
    \paragraph{Key Words.} Derived equivalence, Hodge numbers, the de
    Rham--Witt complex, dominoes.

    \paragraph{Mathematics Subject Classification 2010.}
    \href{http://www.ams.org/mathscinet/msc/msc2010.html?t=14Fxx&btn=Current}{14F30}
    --
    \href{http://www.ams.org/mathscinet/msc/msc2010.html?t=14Fxx&btn=Current}{14F40}
    --
    \href{http://www.ams.org/mathscinet/msc/msc2010.html?t=19Dxx&btn=Current}{19D55}.
\end{abstract}


\section{Introduction}

\newcommand{\BMS}{\mathrm{BMS}}
\newcommand{\DF}{\mathrm{DF}}
\newcommand{\HKR}{\mathrm{HKR}}
\newcommand{\dHH}{\mathrm{dHH}}
\newcommand{\dHC}{\mathrm{dHC}}
\newcommand{\dHP}{\mathrm{dHP}}
\newcommand{\DBF}{\mathrm{DBF}}
\newcommand{\CW}{\mathrm{CW}}

In this paper we study derived invariants of varieties in positive characteristic.

    \begin{itemize}
        \item Let $X$ and $Y$ be smooth proper $k$-schemes for some field
            $k$. We say that $X$ and $Y$ are {\bf Fourier--Mukai equivalent}, or {\bf
            FM-equivalent}, if there is a complex $P\in\Dscr^b(X\times_k Y)$
            such that the induced functor
            $\Phi_P\colon\Dscr^b(X)\rightarrow\Dscr^b(Y)$ is an equivalence,
            where $\Dscr^b(-)$ denotes the dg category of bounded complexes of
            coherent sheaves.
            This is equivalent to asking for $\Dscr^b(X)$ and $\Dscr^b(Y)$ to
            be equivalent as $k$-linear dg categories. When $X$ and $Y$ are
            smooth and projective, they are FM equivalent if and only if there
            is a $k$-linear triangulated equivalence $\D^b(X)\we\D^b(Y)$ by
            Orlov's theorem (see~\cite[Theorem~5.14]{huybrechts}).
        \item Let $h$ be a numerical or categorical invariant of smooth proper
            $k$-schemes. We say that $h$
            is a {\bf derived invariant} if whenever there is a Fourier--Mukai
            equivalence $\Dscr^b(X)\we\Dscr^b(Y)$ we have $h(X)= h(Y)$.
    \end{itemize}

The Hochschild homology groups $\HH_*(X/k)$ of a smooth
proper variety $X$, which are finite dimensional vector spaces over $k$, 
are derived invariants. In characteristic $0$, and in
characteristic $p$ for $p\geq d=\dim X$, the Hochschild--Kostant--Rosenberg
isomorphism~\cite{hkr} relates the
Hochschild homology groups to Hodge cohomology groups $\H^*(X,\Omega^*_X)$. We
briefly review this story in Section \ref{sec:Hochschild homology}. This
relationship has been extensively studied, and plays a key role in our
understanding of derived categories of varieties, especially over the complex
numbers.

Suppose now that $k$ is a perfect field of characteristic $p>0$. To a smooth
proper variety $X$ over $k$ we may associate via topological methods certain $p$-adic
analogs of Hochschild homology: the topological Hochschild homology groups
$\THH_*(X)$, as well as the related groups $\TR_*(X)$ and $\TP_*(X)$. We recall
this theory in Section \ref{sec:Topological Hochschild homology}. These are
modules over $W=W(k)$ (the ring of Witt vectors of $k$) which are equipped with certain extra semilinear structures, and whose construction moreover
depends only on $\Dscr^b(X)$. Furthermore, by a result of
Hesselholt~\cite{Hesselholt}, the
$\TR_*(X)$ may be computed in terms of the Hodge--Witt cohomology groups
$\H^*(W\Omega^*_X)$, similar to the classical relationship between Hochschild homology and Hodge cohomology. Our goal in this paper is to study these objects as derived invariants of the variety $X$.

Our key technical results are obtained in
Section \ref{sec:compatible}, where we analyze the spectral sequence connecting
Hodge--Witt cohomology groups to $\TR_*(X)$, and study the extent to which the extra structures of $F,V,d$ are preserved. We also recall the theory of coherent $R$-modules introduced by Illusie and Raynaud \cite{illusie-raynaud} in their study of the slope spectral sequence, and explain how our noncommutative structures fit into this framework.

For the remainder of the paper we study consequences of this theory. In Section \ref{sec:derived invariants from THH}, we observe the following fact, which extends
results of Bragg--Lieblich~\cite{bragg-lieblich-twistor}. Given a smooth proper
$d$-dimensional $k$-scheme, following Artin--Mazur~\cite{artin-mazur}, we let $\Phi^d_X$ be the functor on augmented Artin
local $k$-algebras
defined by $\Phi^d_X(A)=\ker\left(\H^d(X\times_{\Spec k}\Spec
A,\Gm)\rightarrow\H^d(X,\Gm)\right)$. 

\begin{proposition}\label{prop:formal groups}
    Let $X$ and $Y$ be Calabi--Yau $d$-folds over a perfect field $k$ of
    positive characteristic. If $X$ and $Y$ are FM-equivalent, then $\Phi^d_X\iso\Phi^d_Y$. In particular, the heights of $X$ and $Y$ are equal.
\end{proposition}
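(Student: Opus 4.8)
The plan is to identify the formal group $\Phi^d_X$ with the Dieudonn\'e module on a single Hodge--Witt cohomology group, and then to show that this module is visible in $\TR_*(X)$ and hence is a derived invariant. First I would invoke the Artin--Mazur description. Because $X$ is Calabi--Yau of dimension $d$, we have $\H^i(X,\mathcal{O}_X)=0$ for $0<i<d$ and $\H^d(X,\mathcal{O}_X)\iso k$, so $\Phi^d_X$ is pro-representable by a smooth one-dimensional formal group with tangent space $\H^d(X,\mathcal{O}_X)$. By Artin--Mazur~\cite{artin-mazur}, as refined through the de Rham--Witt formalism of Illusie~\cite{illusie-raynaud}, the Dieudonn\'e module of $\Phi^d_X$ is the Witt-vector cohomology group $\H^d(X,W\mathcal{O}_X)=\H^d(W\Omega^0_X)$ equipped with the Frobenius $F$ and Verschiebung $V$ of the de Rham--Witt complex. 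By Cartier--Dieudonn\'e theory a smooth one-dimensional formal group over the perfect field $k$ is determined up to isomorphism by this module, and its height is read off from it. It therefore suffices to prove that the triple $\left(\H^d(X,W\mathcal{O}_X),F,V\right)$ is a derived invariant.

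Next I would locate this group among the invariants built from $\THH$. The group $\H^d(X,W\mathcal{O}_X)$ is the $i=0$ column of the Hodge--Witt cohomology, and by Hesselholt's theorem~\cite{Hesselholt} together with the spectral sequence analyzed in Section~\ref{sec:compatible} it is an identified subquotient of $\TR_*(X)$. In the Calabi--Yau case the vanishing of $\H^i(X,\mathcal{O}_X)$ for $0<i<d$ isolates this column, so the group is recovered cleanly rather than only up to an extension, and the comparison results of Section~\ref{sec:compatible} show that the semilinear operators carried by this subquotient agree with the de Rham--Witt $F$ and $V$. Since the construction of $\TR_*(X)$ together with its $F$- and $V$-action depends only on the $k$-linear dg category $\Dscr^b(X)$, a Fourier--Mukai equivalence $\Dscr^b(X)\we\Dscr^b(Y)$ induces an isomorphism $\left(\H^d(X,W\mathcal{O}_X),F,V\right)\iso\left(\H^d(Y,W\mathcal{O}_Y),F,V\right)$ of Dieudonn\'e modules. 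By the reduction above this yields $\Phi^d_X\iso\Phi^d_Y$ and, a fortiori, equality of heights.

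The main obstacle is the compatibility of $F$ and $V$ with the equivalence. That the underlying $W$-module $\H^d(X,W\mathcal{O}_X)$ is a derived invariant follows formally from the invariance of $\TR_*(X)$, but a one-dimensional formal group over a non-algebraically-closed perfect field is not determined by this $W$-module alone, so the argument genuinely requires that the equivalence preserve the full Dieudonn\'e structure. Supplying this is exactly the role of the compatibility statements of Section~\ref{sec:compatible}, and checking that the operators produced there on the $i=0$ column coincide with the classical Frobenius and Verschiebung of Artin--Mazur--Illusie is the crux. A secondary point requiring care is the supersingular case, in which $\H^d(X,W\mathcal{O}_X)$ need not be finitely generated over $W$; here one must ensure that the recovered module, together with $F$ and $V$, still pins down the formal group, which is where the coherence and domino machinery recalled in Section~\ref{sec:compatible} enters.
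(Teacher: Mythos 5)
Your proposal is correct and follows essentially the same route as the paper (Section~\ref{sec:derived invariants from THH}): identify the Dieudonn\'e module of $\Phi^d_X$ with $\H^d(X,W\Oscr_X)$ via Artin--Mazur, recover that group together with its $F$ and $V$ operators from $\TR_{-d}(X)$ using the descent spectral sequence~\eqref{eq:Hesselholt SS}, which by Section~\ref{sec:compatible} takes place in the category of Dieudonn\'e modules, and conclude by Dieudonn\'e theory. One correction: the clean recovery of $\H^d(W\Oscr_X)$ has nothing to do with the Calabi--Yau vanishing of $\H^i(X,\Oscr_X)$ for $0<i<d$; it is a corner phenomenon valid for every smooth proper $d$-fold, since $(s,t)=(0,d)$ is the only nonzero term on the antidiagonal $s-t=-d$ and every differential into or out of it has source or target outside the range $0\leq s\leq d$, $0\leq t\leq d$, so $\TR_{-d}(X)\iso\H^d(W\Oscr_X)$ naturally and with no extension problem --- the Calabi--Yau hypothesis is used only for the Artin--Mazur identification. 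Similarly, your worry about the supersingular case does not require the coherence or domino machinery: Cartier--Dieudonn\'e theory classifies smooth formal groups by their (possibly non-finitely generated) Dieudonn\'e modules, so the argument goes through unchanged.
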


When $d=2$, one calls $\Phi^2_X$ the {\bf formal Brauer group} of $X$.
Proposition~\ref{prop:formal groups} implies in particular that the height of a
K3 surface is a derived invariant, which was already known. We then study
derived invariants of surfaces in more detail. We find another proof that the
Artin invariant is a derived invariant of supersingular K3 surfaces, and
recover a result of Tirabassi on Enriques surfaces.

In Sections \ref{sec:slopes and isogeny invariants}, \ref{sec:domino numbers},
and \ref{sec:Hodge-Witt numbers}, we introduce various numerical $p$-adic
invariants. Specializing to the case of varieties of dimension $d\leq 3$, we
prove the following (for definitions, see the body of the paper).

\begin{theorem}\label{thm:omnibus}
    Let $k$ be a perfect field of positive characteristic $p$, let $W=W(k)$ be the ring
    of $p$-typical Witt vectors over $k$, and let $K=W[p^{-1}]$ be the
    fraction field of $W$.
    The following are derived invariants of smooth proper varieties of
    dimension $d\leq 3$ over $k$:
    \begin{enumerate}
        \item[{\rm (1)}] the slopes of Frobenius with multiplicity acting on the rational Hodge--Witt cohomology groups
            $\H^j(W\Omega^i_X)\otimes_W K$ and rational crystalline cohomology groups $\H^i(X/K)$,
        \item[{\rm (2)}] the domino numbers $T^{i,j}$,
        \item[{\rm (3)}] the Hodge--Witt numbers $h^{i,j}_W$,
        \item[{\rm (4)}] the Zeta function $\zeta(X)$ (if $X$ is defined over a finite
            field), and
        \item[{\rm (5)}] the Betti numbers $b_i=\dim_K\H^i(X/K)$.
    \end{enumerate}
\end{theorem}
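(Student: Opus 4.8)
The plan is to reduce all five assertions to a single structural statement established in Section~\ref{sec:compatible}: for smooth proper $X$ over $k$, the homotopy groups $\TR_*(X)$, equipped with the operators $F$, $V$, and $d$, depend only on the $k$-linear dg category $\Dscr^b(X)$. Combined with Hesselholt's identification~\cite{Hesselholt} of $\TR_*(X)$ in terms of the Hodge--Witt cohomology groups $\H^j(X, W\Omega^i_X)$, this shows that the entire Hodge--Witt cohomology, viewed as a coherent $R$-module in the sense of Illusie--Raynaud~\cite{illusie-raynaud}, is a derived invariant. Granting this, each of (1)--(5) follows by extracting the relevant numerical quantity from the $R$-module; a Fourier--Mukai equivalence $\Dscr^b(X)\simeq\Dscr^b(Y)$ then forces the corresponding quantities for $X$ and $Y$ to agree.

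First I would treat the rational invariants (1), (4), and (5). The key point is that the crystalline Frobenius acts on $\H^j(X, W\Omega^i_X)\otimes_W K$ with all slopes in the interval $[i,i+1)$, so that the slope decomposition of $\TR_*(X)\otimes_W K$ recovers each column $\H^j(W\Omega^i_X)_K$ as an $F$-isocrystal. Since the slope spectral sequence degenerates rationally, I then reconstruct $\H^n(X/K)=\bigoplus_{i+j=n}\H^j(W\Omega^i_X)_K$ together with its Frobenius, which gives (1) for both the Hodge--Witt and the crystalline groups. The Betti numbers $b_n=\dim_K\H^n(X/K)$ of (5) are the dimensions of these isocrystals, and over a finite field the zeta function $\zeta(X)$ of (4) is assembled from the characteristic polynomials of Frobenius on the $\H^n(X/K)$ via the crystalline cohomological formula for $\zeta$.

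The invariants (2) and (3) require the finer integral structure of the coherent $R$-module. Here I would appeal to Ekedahl's theory: the domino numbers $T^{i,j}$ count the dominoes in the de Rham--Witt differential $d\colon\H^j(W\Omega^i_X)\to\H^j(W\Omega^{i+1}_X)$ and measure the failure of the integral slope spectral sequence to degenerate, while the Hodge--Witt numbers $h^{i,j}_W$ are determined from the slope numbers and the domino numbers by Ekedahl's combinatorial identities. As both are functorial invariants of the isomorphism class of the coherent $R$-module $\bigoplus_{i,j}\H^j(W\Omega^i_X)$, and that module is a derived invariant, statements (2) and (3) follow.

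The main obstacle---and the source of the restriction $d\leq 3$---is the faithful passage from $\TR_*(X)$ to the bigraded groups $\H^j(W\Omega^i_X)$ together with their integral $F$, $V$, $d$ structure. Rationally the slope filtration cleanly separates the columns in any dimension, but integrally one must control the spectral sequence relating $\TR_*(X)$ to Hodge--Witt cohomology, whose differentials can a priori mix the bigraded pieces and obstruct the identification of the dominoes. I expect that for $d\leq 3$ the relevant differentials vanish for degree reasons, so that the coherent $R$-module is recovered intact and the operators $F$, $V$, $d$ are preserved; carrying this out, which is precisely the content of Section~\ref{sec:compatible}, is the technical heart of the argument.
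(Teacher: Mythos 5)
There is a genuine gap, and it sits exactly at the point your proposal treats as settled. Your whole reduction rests on the claim that the slope decomposition of $\TR_*(X)\otimes_W K$ recovers each individual column $\H^j(W\Omega^i_X)\otimes_W K$, because ``the crystalline Frobenius acts on $\H^j(W\Omega^i_X)\otimes_W K$ with slopes in $[i,i+1)$.'' This conflates two different operators. The Dieudonn\'e-module operator $F$ carried by $\TR_n(X)$ restricts, on each subquotient $\H^j(W\Omega^i_X)$ with $i-j=n$, to the de Rham--Witt operator $F$, whose slopes lie in $[0,1)$ for \emph{every} $i$; it is $p^iF$, not $F$, that has slopes in $[i,i+1)$ and matches the crystalline Frobenius under Illusie's isomorphism~\eqref{eq:slopes of crystalline coho}. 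Consequently the slope decomposition of $\TR_n(X)\otimes_W K$ cannot separate the summands with $i-j=n$: derived invariance of $\TR$ yields only the diagonal sums $h^{\TR}_{n,\lambda}=\sum_{i-j=n}h^{i,j}_{\dRW,\lambda}$ (Theorem~\ref{thm:Newton polygons general}), not the individual slope numbers. For the same reason your stronger claim --- that the full bigraded coherent $R$-module $\bigoplus_{i,j}\H^j(W\Omega^i_X)$ is a derived invariant --- is unsupported: Section~\ref{sec:compatible} only makes $\TR_\bullet(X)$ a coherent $R$-module and makes the descent spectral sequence compatible with that structure. The paper warns explicitly, at the start of Section~\ref{sec:derived invariants from THH}, that the filtration on $\TR_\bullet(X)$ is \emph{not} a derived invariant and that there are nontrivial extensions in the category of $R$-modules, so the bigraded pieces cannot simply be read off from $\TR_\bullet(X)$. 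Your attribution of the restriction $d\leq 3$ to descent differentials ``vanishing for degree reasons'' is also off: for threefolds those differentials need not vanish (Lemma~\ref{lem:E2degeneration} and Figure~\ref{fig:threefoldslope}).

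What the paper actually does is break the diagonal sums apart using inputs external to the $\TR$ formalism, and this is the true source of the dimension restriction. For (1): the Popa--Schnell theorem in positive characteristic (Theorem~\ref{thm:Popa-Schnell}) gives derived invariance of the isogeny class of $\H^1(X/K)$ (Corollary~\ref{cor:Popa Schnell consequence}); combining this with Poincar\'e duality~\eqref{eq:constraint1} and Suh's crystalline Hodge symmetry $h^{i,j}_{\dRW,\lambda}=h^{d-j,d-i}_{\dRW,\lambda}$~\eqref{eq:constraint3}, the linear system $h^{\TR}_{n,\lambda}=\sum_{i-j=n}h^{i,j}_{\dRW,\lambda}$ can be solved for each individual $h^{i,j}_{\dRW,\lambda}$ precisely when $d\leq 3$ (Theorem~\ref{thm:Newton polygons}). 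For (2): one needs Proposition~\ref{prop:top domino numbers is invariant} (the identity $T^{0,d}=T^{\cyc}_{-d}$), Ekedahl's duality $T^{i,j}=T^{d-i-2,d-j+2}$~\eqref{eq:duality for dominoes}, and degeneration at the level of dominoes together with additivity (Lemmas~\ref{lem:degeneration for threefolds} and~\ref{lem:additivity of domino numbers}) to pin down the four possibly nonzero domino numbers of a threefold from the derived invariants $T^{\cyc}_{-3}$ and $T^{\cyc}_{-2}=T^{0,2}+T^{1,3}$. Your deductions of (3) from (1) and (2) via Ekedahl's formulas, and of (4) and (5) from (1), are fine once those two are in place, but as written the proposal proves neither (1) nor (2).
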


Part (4) was previously proved by Honigs in~\cite{honigs-3}, whose methods also suffice to prove (1) and (5). The proofs of statements (2) and (3), however, crucially rely on the topological derived invariants.

In Section \ref{sec:Hodge numbers} we consider the question of whether the
Hodge numbers $h^{i,j}=\dim_k\H^j(X,\Omega^i_X)$ are derived invariants. For context, we note that a conjecture of
Orlov~\cite{orlov} states that the rational Chow motive $\mathfrak{h}_X$ of a variety $X$ is
a derived invariant. In characteristic $0$, the Hodge numbers are determined by
the rational Chow motive, and so Orlov's conjecture implies that the Hodge
numbers are derived invariant. This consequence has been verified by
Popa--Schnell \cite{popa-schnell} for varieties of dimension $d\leq 3$. However,
as discussed in Section \ref{sec:Hodge numbers}, their proof breaks in several
ways in positive characteristic. In characteristic $p$, the Hodge numbers are
related (in a somewhat subtle way) to Hodge--Witt cohomology groups. Using this relationship and Theorem \ref{thm:omnibus}, we prove the following.
\begin{theorem}
    Suppose that $X$ and $Y$ are FM-equivalent smooth proper varieties of
    dimension $d$ over a field $k$ of positive characteristic.
    \begin{enumerate}
        \item[{\rm (1)}] If $d\leq 2$, then $h^{i,j}(X)=h^{i,j}(Y)$ for all $i,j$.
        \item[{\rm (2)}] If $d\leq 3$, then $\chi(\Omega^i_X)=\chi(\Omega^i_Y)$ for all $i$.
    \end{enumerate}
\end{theorem}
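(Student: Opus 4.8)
The plan is to read off both statements from the comparison between Hodge and Hodge--Witt cohomology, using Theorem~\ref{thm:omnibus} to supply the derived invariance of the Hodge--Witt and domino numbers. First I would reduce to the case that $k$ is perfect: the Hodge numbers $h^{i,j}$, the Euler characteristics $\chi(\Omega^i_X)=\sum_j(-1)^jh^{i,j}$, and the existence of a Fourier--Mukai kernel all survive base change along $k\to k^{\mathrm{perf}}$, so it is harmless to enlarge $k$ until Theorem~\ref{thm:omnibus} applies. It is worth emphasizing at the outset that, since $p$ may be smaller than $d$, the Hochschild--Kostant--Rosenberg isomorphism is not available; rather than using $\HH_*(X/k)$, the argument therefore stays entirely on the crystalline and de Rham--Witt side, where the relevant invariants were extracted from $\TR$ in Theorem~\ref{thm:omnibus}.

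The engine is the comparison of Ekedahl and Illusie--Raynaud \cite{illusie-raynaud} between the Hodge number $h^{i,j}$, the Hodge--Witt number $h^{i,j}_W=\dim_K\H^j(X,W\Omega^i_X)\otimes_W K$, and the domino numbers $T^{i,j}$ attached to the differentials $\H^j(X,W\Omega^i_X)\to\H^j(X,W\Omega^{i+1}_X)$ of the slope spectral sequence. For $d\le 2$ this takes the clean form
\begin{equation*}
    h^{i,j}=h^{i,j}_W+T^{i,j}+T^{i-1,j+1},
\end{equation*}
an identity for a single variety. Each term on the right is a derived invariant by parts (2) and (3) of Theorem~\ref{thm:omnibus}; applying the identity to $X$ and to $Y$ and comparing therefore gives $h^{i,j}(X)=h^{i,j}(Y)$ for every $i,j$, which is part~(1).

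For part~(2) I would pass to the alternating sums. In dimension $d\le 3$ the same analysis no longer expresses $h^{i,j}$ through $d_1$-level data alone: there are additional contributions from the second differential of the slope spectral sequence, together with the potential failure of Hodge--de Rham degeneration, and these are not visibly derived invariant --- which is exactly why individual Hodge numbers cannot be controlled beyond surfaces. The key point is that these correction terms are insensitive to the alternating sum: assembling Ekedahl's comparison into $\chi(\Omega^i_X)=\sum_j(-1)^jh^{i,j}$, the higher-differential corrections cancel and one is left with
\begin{equation*}
    \chi(\Omega^i_X)=\sum_j(-1)^j h^{i,j}_W+\sum_j(-1)^j\left(T^{i,j}-T^{i-1,j}\right),
\end{equation*}
whose right-hand side is built entirely from the derived invariants of Theorem~\ref{thm:omnibus}. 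Comparing $X$ and $Y$ then yields $\chi(\Omega^i_X)=\chi(\Omega^i_Y)$.

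The main obstacle is precisely the dimension-three bookkeeping in the last step: one must show that the deviation of $h^{i,j}$ from the $d_1$-level expression $h^{i,j}_W+T^{i,j}+T^{i-1,j+1}$ --- the part recording the higher slope-spectral-sequence differentials and any failure of degeneration --- contributes zero to each alternating sum $\sum_j(-1)^j h^{i,j}$. Equivalently, one must verify that the coherent Euler characteristic $\chi(\Omega^i_X)$, unlike the individual Hodge numbers, is governed entirely by the rational slope data and the $d_1$-dominoes, both of which are derived invariant. I expect Serre duality $h^{i,j}=h^{d-i,d-j}$ and the rational degeneration of the slope spectral sequence to organize this cancellation and to cut down the number of cases that must be checked by hand.
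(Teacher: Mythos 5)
Your argument rests on two identities that are false, together with a misstatement of what the Hodge--Witt numbers are. First, $h^{i,j}_W$ is \emph{not} $\dim_K\H^j(X,W\Omega^i_X)\otimes_W K$ (that is the slope number $\sum_\lambda h^{i,j}_{\dRW,\lambda}$); Ekedahl's Hodge--Witt number is $h^{i,j}_W=m^{i,j}+T^{i,j}-2T^{i-1,j+1}+T^{i-2,j+2}$, where the Hodge--Newton numbers $m^{i,j}$ weight each slope by its distance from $i$. The two notions already differ for a supersingular abelian surface, where $m^{0,1}=2$ while $\dim_K\H^1(W\Oscr_X)\otimes_W K=4$. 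Second, and more seriously, your ``clean form'' $h^{i,j}=h^{i,j}_W+T^{i,j}+T^{i-1,j+1}$ for $d\leq 2$ is not a theorem and is false: for a singular or supersingular Enriques surface in characteristic $2$ one has $h^{0,1}=1$, while $b_1=0$ forces every degree-one slope contribution to vanish and $T^{0,1}=T^{-1,2}=0$ (domino numbers vanish for $j\leq 1$), so the right-hand side is $0$ under either definition of $h^{i,j}_W$. The only general relations between Hodge and Hodge--Witt numbers are the inequality \eqref{eq:inequality for HW and hodge} and Crew's formula \eqref{eq:Crew's formula}; the defect $h^{i,j}-h^{i,j}_W$ records crystalline torsion and failure of Hodge--de Rham degeneration, which are precisely the things not controlled by Theorem~\ref{thm:omnibus}. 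Your displayed formula in part (2) fails as well: for a supersingular abelian surface (where $T^{0,2}=1$) and $i=0$ it gives $1-4+0+1=-2$, not $\chi(\Oscr_X)=0$.

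Your decision to avoid Hochschild homology rests on a vacuous worry and is fatal for part (1). For $d\leq 2$ one always has $\dim X\leq 2\leq p$, so the HKR spectral sequence degenerates and Theorem~\ref{thm:HH and hodge numbers} applies; the only problematic case is $d=3$, $p=2$, which is irrelevant to part (1). Moreover the crystalline-side invariants genuinely cannot determine individual Hodge numbers of a surface: after Serre duality the unknowns are $h^{0,1},h^{0,2},h^{1,0},h^{1,1}$, while the derived invariants available without $\HH$ reduce to $\chi(\Oscr_X)$ and $\chi(\Omega^1_X)$ (note $\chi(\Omega^2_X)=\chi(\Oscr_X)$ by Serre duality) --- two relations for four unknowns. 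This is exactly why the paper's proof of Theorem~\ref{thm:hodge numbers for surfaces} combines both inputs: HKR gives the derived invariance of $h^{2,0},h^{1,1},h^{0,2}$ and of the sums $h^{0,1}+h^{1,2}$ and $h^{1,0}+h^{2,1}$, while Theorem~\ref{thm:hodge-witt numbers} plus Crew's formula gives the derived invariance of $\chi(\Omega^i_X)$; the two together pin down all $h^{i,j}$. For part (2) your conclusion is correct but your route is both unnecessary and broken: Crew's formula \eqref{eq:Crew's formula} \emph{is} the statement $\chi(\Omega^i_X)=\sum_j(-1)^jh^{i,j}_W$ with no correction terms, so derived invariance of $\chi(\Omega^i_X)$ for $d\leq 3$ follows in one line from Theorem~\ref{thm:hodge-witt numbers}, which is the paper's proof of Theorem~\ref{thm:hodge numbers for threefolds}; there is no higher-differential cancellation left to organize.
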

We remark that this result uses topological Hochschild homology constructions in a key way. Even in the case of surfaces, we do not know a direct proof using only Hochschild homology.

Under a mild additional assumption, we are able to strengthen this result for $d=3$.
We say that a smooth proper variety $X$ over a perfect field $k$ of positive
characteristic is {\bf Mazur--Ogus} if the Hodge--de Rham
spectral sequence for $X$ degenerates at $\E_1$ and the crystalline cohomology
groups of $X$ are torsion-free. The class of Mazur--Ogus schemes includes
smooth complete intersections, abelian varieties, and K3 surfaces.

\begin{theorem}
    Suppose that $X$ and $Y$ are FM-equivalent smooth proper varieties of
    dimension $d=3$ over a perfect field $k$ of positive characteristic $p\geq 3$. If
    $X$ is Mazur--Ogus, then so is $Y$, and $h^{i,j}(X)=h^{i,j}(Y)$ for all
    $i,j$.
\end{theorem}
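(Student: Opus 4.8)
The plan is to reduce the Mazur--Ogus property to a numerical condition, transport it across the equivalence using the topological invariants, and then read off the Hodge numbers from the Hodge--Witt numbers. The key elementary observation is that a smooth proper $k$-scheme $Z$ is Mazur--Ogus if and only if $\sum_{i+j=n}h^{i,j}(Z)=b_n(Z)$ for all $n$: for any such $Z$ one has $b_n\leq\dim_k\H^n_{\dR}(Z/k)\leq\sum_{i+j=n}h^{i,j}(Z)$, the first inequality coming from universal coefficients for crystalline cohomology and the second from the Hodge--de Rham spectral sequence, and simultaneous equality for all $n$ is precisely torsion-freeness of $\H^*_{\crys}(Z/W)$ together with $\E_1$-degeneration. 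Granting this, it suffices to show that (a) $\H^*_{\crys}(Y/W)$ is torsion-free and (b) the Hodge--de Rham spectral sequence of $Y$ degenerates; for then $Y$ is Mazur--Ogus, and combining the Ekedahl--Illusie--Raynaud facts $\sum_{i+j=n}h^{i,j}_W=b_n$ and $h^{i,j}_W\leq h^{i,j}$ with the Mazur--Ogus property forces $h^{i,j}=h^{i,j}_W$ for both $X$ and $Y$, whence $h^{i,j}(X)=h^{i,j}_W(X)=h^{i,j}_W(Y)=h^{i,j}(Y)$, using that $h^{i,j}_W$ and $b_n$ are derived invariants (Theorem~\ref{thm:omnibus}(3),(5)).

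For (a) I would use periodic topological cyclic homology. Since $\TP(-)$ is a functor of the $k$-linear dg category $\Perf(\cdot)\simeq\Dscr^b(\cdot)$, the Fourier--Mukai equivalence yields $\TP(X)\simeq\TP(Y)$, and for a smooth proper scheme over the perfect field $k$ the groups $\TP_*$ are computed from the integral crystalline cohomology $\H^*_{\crys}(-/W)$, aggregated by the parity of the cohomological degree. Torsion-freeness is a vanishing condition, hence is already visible in this parity-aggregated form; as $X$ is Mazur--Ogus its crystalline cohomology is torsion-free, and therefore so is that of $Y$. It is exactly here that the \emph{integral} information carried by $\TP$, as opposed to the rational information of Theorem~\ref{thm:omnibus}(1) or of $\HH$, is indispensable.

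For (b) I would argue through the degeneration of the noncommutative Hodge--de Rham (Hochschild-to-periodic-cyclic) spectral sequence, which is a Morita, hence derived, invariant of $\Perf(X)\simeq\Perf(Y)$. When $p>d=3$ the Hochschild--Kostant--Rosenberg theorem identifies this spectral sequence with the classical Hodge--de Rham spectral sequence on both sides, so degeneration for $X$---which holds because $X$ is Mazur--Ogus---passes to $Y$, and (b) follows. The main obstacle is the boundary case $p=3=d$, where HKR degeneration is unavailable and the divided-power correction first appears in degree $p=3$, so that the commutative and noncommutative pictures diverge precisely in the top Hodge line. Here I would pass to the topological refinements $\THH$, $\TR$, and $\TP$, which by Hesselholt's theorem compute the de Rham--Witt cohomology integrally for every $p$ and are thus insensitive to the small-prime failure of HKR; the plan is to apply HKR in the unobstructed degrees $<p$, to recover the top line $h^{3,j}=h^{0,3-j}$ from the bottom line by Serre duality, and to use the Mazur--Ogus hypothesis on $X$ together with the derived invariance of the Hodge--Witt numbers to pin the remaining non-degeneration defect of $Y$---that is, the discrepancy $\sum_{i+j=n}h^{i,j}(Y)-b_n\geq 0$---to zero. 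Making this last transport rigorous, i.e.\ showing that at $p=d$ the $\E_1$-degeneration of the Hodge--de Rham spectral sequence is controlled entirely by the topological derived invariants, is the technical heart of the argument; everything else is formal given the reductions above.
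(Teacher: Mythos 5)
Your reduction via Lemma~\ref{lem:Mazur Ogus lemma} and your closing squeeze (combining $h^{i,j}_W\leq h^{i,j}$ with $\sum_{i+j=n}h^{i,j}_W=b_n$) are correct, but both of the steps by which you propose to prove that $Y$ is Mazur--Ogus have genuine gaps. For (a): torsion-freeness of $\TP_*(Y)$ does \emph{not} imply torsion-freeness of $\H^*_{\crys}(Y/W)$. The crystalline--$\TP$ spectral sequence~\eqref{eq:crystalline to TP} is only known to degenerate rationally (Proposition~\ref{prop:rational degeneration}), and it is not a derived invariant, so its integral behavior cannot be transported from $X$ to $Y$. Worse, even granting integral degeneration for $Y$, the groups $\H^*_{\crys}(Y/W)$ appear only as subquotients of $\TP_*(Y)$: in the filtration on the abutment, only the deepest step is a genuine submodule, and a quotient of a torsion-free $W$-module can have torsion (e.g.\ $\ZZ_p\twoheadrightarrow\ZZ_p/p$). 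So your assertion that ``torsion-freeness is a vanishing condition, hence visible in the parity-aggregated form'' is false: $p$-torsion in crystalline cohomology can be invisible in $\TP$. For the same reason, even the input that $\TP_*(X)$ is torsion-free when $X$ is Mazur--Ogus is not established, since it too requires integral degeneration of~\eqref{eq:crystalline to TP} for $X$.

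For (b): HKR does not identify the noncommutative Hodge--de Rham spectral sequence with the classical one. The former is the Tate spectral sequence~\eqref{eq:Tate to HP}, with abutment $\HP_*(X/k)$; the latter abuts to $\H^*_{\dR}(X/k)$. In characteristic $p$ these abutments differ and are related only through a third spectral sequence, the de Rham--$\HP$ spectral sequence~\eqref{eq:de Rham to HP}, which need not degenerate and is itself not a derived invariant. Consequently, Mazur--Ogus for $X$ is not known to imply degeneration of the Tate spectral sequence for $X$; the known criterion (Kaledin, Mathew) requires a lift of $X$ to $W_2(k)$, which the Mazur--Ogus hypothesis does not supply. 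Your worry about $p=3$ is also misplaced: HKR degeneration holds whenever $\dim X\leq p$, so Theorem~\ref{thm:HH and hodge numbers} applies exactly in the stated range $p\geq d=3$. The paper's proof avoids (a) and (b) altogether by reversing your order of deduction: from $X$ Mazur--Ogus, Lemma~\ref{lem:Mazur Ogus lemma}, Theorem~\ref{thm:hodge-witt numbers}, and Ekedahl's inequality~\eqref{eq:inequality for HW and hodge} one gets $h^{i,j}(X)=h^{i,j}_W(X)=h^{i,j}_W(Y)\leq h^{i,j}(Y)$ for all $i,j$, while Theorem~\ref{thm:HH and hodge numbers} forces the antidiagonal sums to agree; the squeeze then gives $h^{i,j}(X)=h^{i,j}(Y)$ termwise, whence $h^{i,j}(Y)=h^{i,j}_W(Y)$, and $Y$ is Mazur--Ogus by Lemma~\ref{lem:Mazur Ogus lemma}. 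In other words: prove the Hodge-number equalities first, and deduce the Mazur--Ogus property for $Y$ afterwards, rather than attempting to verify torsion-freeness and Hodge--de Rham degeneration for $Y$ directly from the topological invariants.
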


Finally, in Section~\ref{sec:K3}, we compute $\TR$ and $\TP$ for
twisted K3 surfaces and discuss how to recover the fine structure of the Mukai
lattice from $\TP$.

\paragraph{Conventions.}
We will use many spectral sequences in this paper. They all converge for smooth
and proper schemes or dg categories, so we will say
nothing more about convergence in our discussion.
  
If $X$ is a smooth proper scheme
over $k$, we will write $\H^*(X/W)$ for the crystalline cohomology groups of $X$ relative to $W=W(k)$ and $\H^*(W\Omega^*_X)=\H^*(X,W\Omega^*_X)$ for the Hodge--Witt cohomology groups of $X$, as defined in \cite{illusie-derham-witt}.

\paragraph{Acknowledgments.} The first author was supported by NSF Grant DMS-1552766. The second author was partially supported by NSF RTG grant DMS-1646385 and by NSF postdoctoral fellowship DMS-1902875. Both authors were supported by the National Science Foundation under Grant DMS-1440140 while in residence at the
Mathematical Sciences Research Institute in Berkeley, California during the
Spring 2019 semester.

\section{Hochschild homology}\label{sec:Hochschild homology}

Let $k$ be a commutative ring.
For any $k$-linear dg category $\Cscr$, the Hochschild homology of $\Cscr$ over
$k$ is an object
$\HH(\Cscr/k)\in\Dscr(k)$ which is equipped with an action of the circle $S^1$. The
homology groups $\H_i(\HH(\Cscr/k))=\HH_i(\Cscr/k)$ are the Hochschild homology
groups of $\Cscr$ over $k$. For a scheme $X/k$, we let
$\HH(X/k)=\HH(\Perfscr(X)/k)$, where $\Perfscr(X)$ is the $k$-linear dg
category of perfect complexes on $X$. This is a {\em noncommutative invariant} of $k$-schemes meaning in
particular that if $\Perfscr(X)\we\Perfscr(Y)$, then $\HH(X/k)\we\HH(Y/k)$ as
complexes with $S^1$-action. For some details on Hochschild homology and the
constructions below from a classical perspective, see~\cite{loday}; for details
on Hochschild homology from a modern perspective, see~\cite{bms2}.

While the Hochschild homology of a smooth proper $k$-scheme $X$ is a derived invariant, one often computes it via the following spectral sequence, which is not.

\begin{definition}
  The {\bf Hochschild--Kostant--Rosenberg spectral sequence}
\begin{equation}\label{eq:HKR spectral sequence}
    \E_2^{s,t}=\H^t(X,\Omega^s_{X})\Rightarrow\HH_{s-t}(X/k),
\end{equation}
is the descent, or local-to-global,
spectral sequence for Hochschild homology.\footnote{With this indexing the
differentials $d_r$ have bidegree $(r-1,r)$; this convention has the advantage
that the $\E_2$ page of~\eqref{eq:HKR spectral sequence} agrees with the $\E_1$
page of the Hodge--de Rham spectral sequence~\eqref{eq:Hodge to de Rham}.} Here, $\Omega^*_X=\Omega^*_{X/k}$ are the sheaves of de Rham forms relative to $k$.
\end{definition}
The HKR spectral sequence is known to degenerate for smooth
schemes in characteristic zero, or more generally when $\dim(X)!$ is invertible in $k$.
It also degenerates in characteristic $p$ when $\dim(X)\leq p$
by~\cite{antieau-vezzosi}. In general, when $\dim(X)>p$, the HKR spectral sequence
does not degenerate; for examples,
see~\cite{antieau-bhatt-mathew}. If~\eqref{eq:HKR spectral sequence} degenerates, then there exist non-canonical isomorphisms
\[
  \HH_i(X/k)\cong\bigoplus_{j}\H^{j-i}(X,\Omega^j_X)
\]
of $k$-vector spaces for each $i$. The above discussion implies the following well known result.

\begin{theorem}\label{thm:HH and hodge numbers}
    Let $X$ and $Y$ be FM-equivalent smooth proper schemes of dimension $d$ over a field $k$ of characteristic $p$. If $p=0$ or $p\geq d$, there exist isomorphisms
    \[
      \bigoplus_{j}\H^{j-i}(X,\Omega^j_X)\cong
      \bigoplus_{j}\H^{j-i}(Y,\Omega^j_Y).
    \]
    In particular,
    \[
      \sum_j h^{j,j-i}(X)=\sum_j h^{j,j-i}(Y)
    \]
    for all $i$.
\end{theorem}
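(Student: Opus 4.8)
The plan is to combine the two ingredients recalled above: the noncommutative (hence derived) invariance of Hochschild homology, and the degeneration of the HKR spectral sequence in the stated range of characteristics. First I would use that an FM-equivalence gives an equivalence $\Perfscr(X)\simeq\Perfscr(Y)$ of $k$-linear dg categories; by the noncommutative invariance of $\HH(-/k)$ this produces an equivalence $\HH(X/k)\simeq\HH(Y/k)$ in $\Dscr(k)$ (in fact $S^1$-equivariantly, though only the underlying equivalence is needed). Passing to homology, we obtain isomorphisms $\HH_i(X/k)\cong\HH_i(Y/k)$ of $k$-vector spaces for every $i$.

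Next I would invoke the hypothesis on the characteristic. When $p=0$ the HKR spectral sequence \eqref{eq:HKR spectral sequence} degenerates at $\E_2$ classically, and when $p\geq d$ it degenerates by the theorem of Antieau--Vezzosi, since $\dim X=d\leq p$. In either case degeneration yields a (non-canonical) isomorphism between $\HH_i(X/k)$ and the associated graded $\bigoplus_{s-t=i}\E_2^{s,t}$ of the HKR filtration. Setting $s=j$ and $t=j-i$, this identifies $\HH_i(X/k)\cong\bigoplus_j\H^{j-i}(X,\Omega^j_X)$ as $k$-vector spaces, and the same argument applies verbatim to $Y$.

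Concatenating the three isomorphisms gives
\[
  \bigoplus_j\H^{j-i}(X,\Omega^j_X)\cong\HH_i(X/k)\cong\HH_i(Y/k)\cong\bigoplus_j\H^{j-i}(Y,\Omega^j_Y),
\]
and comparing $k$-dimensions produces the asserted equality $\sum_j h^{j,j-i}(X)=\sum_j h^{j,j-i}(Y)$ for each $i$.

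The argument is formal once both inputs are available, so there is no genuine obstacle; the content lives entirely in the derived invariance of $\HH$ and the cited degeneration result. The one point worth flagging is that degeneration only supplies an isomorphism of associated gradeds, with no handle on the individual filtration steps. Thus the method detects only the sum of Hodge numbers along each anti-diagonal $\{(a,b):a-b=i\}$ rather than the individual $h^{i,j}$---precisely the limitation that the finer topological invariants of the later sections are designed to overcome.
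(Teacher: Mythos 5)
Your proof is correct and is precisely the argument the paper intends: the theorem is stated there as an immediate consequence of the preceding discussion, namely the derived invariance of $\HH(-/k)$ under FM-equivalence together with the degeneration of the HKR spectral sequence (classical in characteristic $0$, and by Antieau--Vezzosi when $\dim X\leq p$), which yields the non-canonical isomorphisms $\HH_i(X/k)\cong\bigoplus_j\H^{j-i}(X,\Omega^j_X)$ that you concatenate. Your closing remark about the method only seeing sums along anti-diagonals, not individual Hodge numbers, matches the paper's own framing of why the finer topological invariants are needed later.
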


From Hochschild homology, one constructs several other noncommutative invariants, namely the cyclic
homology $\HC(\Cscr/k)=\HH(\Cscr/k)_{hS^1}$ obtained using the $S^1$-homotopy
orbits, the negative cyclic homology
$\HC^-(\Cscr/k)=\HH(\Cscr/k)^{hS^1}$ obtained using the $S^1$-homotopy fixed
points,
and the periodic cyclic homology $\HP(\Cscr/k)=\HH(\Cscr/k)^{tS^1}$ obtained
using the $S^1$-Tate construction. See~\cite{loday} background. For a $k$-scheme $X$, each of
these theories is computed by two spectral sequences: a noncommutative spectral
sequence and a de Rham spectral sequence. We review the theory for $\HP(X/k)$;
the other cases are similar.

\begin{definition}
    By definition of the Tate construction (see for example~\cite{nikolaus-scholze}),
    there is a {\bf Tate spectral sequence}
    \begin{equation}\label{eq:Tate to HP}
        \E_2^{s,t}=\widehat{\H}^s(\CC\PP^\infty,\HH_t(X/k))\Rightarrow\HP_{t-s}(X/k)
    \end{equation}
    computing $\HP(X/k)$,
    with differentials $d_r$ of bidegree $(r,r-1)$,
    where $\widehat{\H}^\ast(\CC\PP^\infty,-)$ is a $2$-periodic version of the
    cohomology of $\CC\PP^\infty$. When computing $\HP_*(X/k)$ via a mixed
    complex as in~\cite{loday}, this is the spectral sequence arising from the
    filtration by columns. This is often called the noncommutative
    Hodge--de Rham spectral sequence.
\end{definition}

\begin{definition}
    Let $X$ be a smooth and proper $k$-scheme.
    There is a {\bf de Rham--$\mathbf{HP}$ spectral sequence}
    \begin{equation}\label{eq:de Rham to HP}
        \E_2^{s,t}=\H^{s-t}_{\dR}(X/k)\Rightarrow\HP_{-s-t}(X/k),
    \end{equation}
    with differentials $d_r$ of bidegree $(r,1-r)$,
    where $\R\Gamma_{\dR}(X/k)$ is the de Rham cohomology of $X/k$ and
    $\H^{s-t}_{\dR}(X/k)=\H^{s-t}(\R\Gamma_{\dR}(X/k))$. This spectral sequence was constructed in~\cite{bms2} in the
    $p$-adically complete situation and in~\cite{antieau-derham} in general.
    In characteristic zero, it can easily be extracted
    from~\cite{toen-vezzosi-simpliciales}.
\end{definition}

Finally, for a smooth proper $k$-scheme, we have the {\bf Hodge--de Rham spectral sequence}
    \begin{equation}\label{eq:Hodge to de Rham}
        \E_1^{s,t}=\H^t(X,\Omega^s_{X/k})\Rightarrow\H^{s+t}_{\dR}(X/k)
    \end{equation}
which has differentials $d_r$ of bidegree $(r,1-r)$. We summarize our situation
in Figure~\ref{fig:SS diamond 1}.

\begin{figure}[H]
  \centering
  \begin{tikzcd}
    &\HH_*(X/k)\arrow[Rightarrow]{dr}{\text{Tate}}&\\
    \H^*(X,\Omega^*_X)\arrow[Rightarrow]{ur}{\text{HKR}}\arrow[Rightarrow]{dr}[swap]{\text{Hodge--de Rham}}&&\HP_*(X/k)\\
    &\H^*_{\dR}(X/k)\arrow[Rightarrow]{ur}[swap]{\text{de Rham--HP}}&
  \end{tikzcd}
  \caption{Four spectral sequences associated to a smooth proper scheme $X$ over $k$.}
  \label{fig:SS diamond 1}
\end{figure}
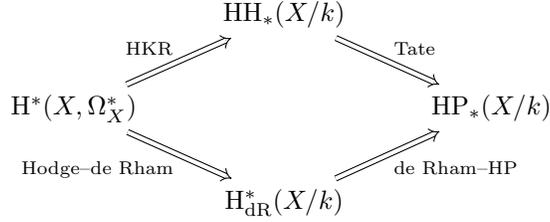

The Tate spectral sequence~\eqref{eq:Tate to HP} itself, meaning the collection
of pages and differentials, is a derived invariant. However, there is no reason
for the de Rham--$\HP$ spectral
sequence~\eqref{eq:de Rham to HP} to be derived invariant, although the objects it computes are derived invariants. 

\begin{remark}
    Playing these spectral sequences off of each other can be profitable. For
    example, if $k$ is a field and if $X/k$ is smooth and proper and if the HKR spectral sequence~\eqref{eq:HKR spectral sequence} degenerates (for example if
    $\dim(X)\leq p$), then the degeneration of the Tate spectral
    sequence~\eqref{eq:Tate to HP} implies degeneration of the de Rham--$\HP$ spectral sequence~\eqref{eq:de Rham to HP} {\em and}
    the Hodge--de Rham spectral sequence~\eqref{eq:Hodge to de Rham}. Similarly, if the HKR and Hodge--de Rham spectral sequences degenerate,
    then the Tate spectral sequence degenerates if and only if the de Rham--$\HP$ spectral sequence degenerates.
\end{remark}

If $k$ is a perfect field,
the Tate spectral sequence~\eqref{eq:Tate to HP} computing $\HP$ degenerates when $\Cscr$ is smooth and
proper over $k$, $\HH_i(\Cscr/k)=0$ for $i\notin[-p,p]$, and $\Cscr$ lifts to $W_2(k)$ by
work of Kaledin~\cite{kaledin-nhdr,kaledin-spectral} (see
also Mathew's paper~\cite{mathew-degeneration}).
Using this fact, we prove a theorem which implies Hodge--de Rham
degeneration in many cases.

\begin{theorem}\label{thm:lift}
    Let $k$ be a perfect field of positive characteristic $p$. Let $X,Y$ be smooth proper schemes such that
    $\Dscr^b(X)\we\Dscr^b(Y)$ and $\dim(X)=\dim(Y)\leq p$. If $X$ lifts to
    $W_2(k)$, then the Hodge--de Rham spectral sequence degenerates for $Y$.
\end{theorem}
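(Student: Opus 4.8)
The plan is to use Kaledin's degeneration criterion to show that the Tate spectral sequence~\eqref{eq:Tate to HP} degenerates for $X$, to transport this degeneration to $Y$ using that the Tate spectral sequence is a derived invariant, and then to read off Hodge--de Rham degeneration for $Y$ from the diamond of Figure~\ref{fig:SS diamond 1}. The essential inputs are all available earlier in the text, so the argument is largely one of assembling them in the right order.

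First I would verify the hypotheses of Kaledin's theorem~\cite{kaledin-nhdr,kaledin-spectral} for $X$. Smoothness, properness, and the lift to $W_2(k)$ are given, so it remains to check that $\HH_i(X/k)=0$ for $i\notin[-p,p]$. Writing $d=\dim X$, the $\E_2$-term $\H^t(X,\Omega^s_X)$ of the HKR spectral sequence~\eqref{eq:HKR spectral sequence} vanishes unless $0\leq s,t\leq d$, so every term with $s-t=n$ vanishes once $|n|>d$; since $\HH_n(X/k)$ is an iterated subquotient of these terms, $\HH_n(X/k)=0$ for $|n|>d$. As $d\leq p$ gives $[-d,d]\subseteq[-p,p]$, Kaledin's hypothesis holds and the Tate spectral sequence degenerates for $X$. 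Note this uses only the support of the HKR $\E_2$-page, not its degeneration.

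Next I would transport this to $Y$. As $X$ and $Y$ are smooth, hence regular, coherent complexes are perfect and $\Dscr^b(X)\we\Perfscr(X)$, $\Dscr^b(Y)\we\Perfscr(Y)$; thus the equivalence $\Dscr^b(X)\we\Dscr^b(Y)$ induces an equivalence $\Perfscr(X)\we\Perfscr(Y)$ of $k$-linear dg categories and hence an equivalence $\HH(X/k)\we\HH(Y/k)$ respecting the $S^1$-actions. Since the Tate spectral sequence is built functorially from $\HH(-/k)$ together with its circle action, its pages and differentials form a derived invariant, so it degenerates for $Y$ as well. Finally I would run the counting argument around the diamond for $Y$: since $\dim Y=\dim X\leq p$, the HKR spectral sequence degenerates for $Y$ by~\cite{antieau-vezzosi}. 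Letting $H=\sum_{s,t}\dim_k\H^t(Y,\Omega^s_Y)$, degeneration of HKR gives $\sum_n\dim_k\HH_n(Y/k)=H$, and degeneration of the Tate spectral sequence then forces $\dim_k\HP_0(Y/k)+\dim_k\HP_1(Y/k)=H$. On the other hand the de Rham--$\HP$ and Hodge--de Rham spectral sequences yield $\dim_k\HP_0(Y/k)+\dim_k\HP_1(Y/k)\leq\sum_n\dim_k\H^n_{\dR}(Y/k)\leq H$. Equality of the outer terms pinches this to a chain of equalities, and $\sum_n\dim_k\H^n_{\dR}(Y/k)=H$ is exactly degeneration of the Hodge--de Rham spectral sequence for $Y$. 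This is the content of the Remark following Figure~\ref{fig:SS diamond 1}.

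I expect the only delicate point to be the dimension bookkeeping in this last step. One must correctly account for the $2$-periodicity of the Tate and de Rham--$\HP$ spectral sequences when converting degeneration statements into the total-dimension count, so that the single chain of inequalities running through $\HP_*(Y/k)$ and $\H^*_{\dR}(Y/k)$ is genuinely forced to be a chain of equalities; everything else is a direct application of Kaledin's theorem and the derived invariance of $\HH(-/k)$ with its $S^1$-action.
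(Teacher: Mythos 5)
Your proposal is correct and follows essentially the same route as the paper: Kaledin's theorem gives degeneration of the Tate spectral sequence for $X$ (with the hypothesis $\HH_i(X/k)=0$ for $i\notin[-p,p]$ checked from the support of the HKR $\E_2$-page), this degeneration is transported to $Y$, and Hodge--de Rham degeneration for $Y$ then follows by counting dimensions around the diamond of Figure~\ref{fig:SS diamond 1}. The only (harmless) variation is that you transport the Tate degeneration to $Y$ via the derived invariance of the Tate spectral sequence itself, whereas the paper deduces it by comparing the total dimension of $\HP(X/k)\we\HP(Y/k)$ with that of $\HH(Y/k)$; both inputs are justified earlier in the text.
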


\begin{proof}
    The HKR spectral sequence~\eqref{eq:HKR spectral sequence} degenerates for both $X$ and $Y$
    by~\cite{antieau-vezzosi}. Since $X$ lifts to $W_2(k)$, the Tate spectral
    sequence~\eqref{eq:Tate to HP} degenerates for $X$. This tells us the total
    dimension of $\HP(X/k)\we\HP(Y/k)$ and hence implies that the Tate
    spectral sequence~\eqref{eq:Tate to HP} degenerates for $Y$ as well. The existence of the
    convergent de Rham--$\HP$ spectral sequence~\eqref{eq:de Rham to HP} now implies that the Hodge--de
    Rham spectral sequence~\eqref{eq:Hodge to de Rham} degenerates for $Y$ by counting dimensions.
\end{proof}

The theorem is some evidence for a positive answer to the following question.

\begin{question}[Lieblich]\label{question:lieblich}
    Let $X$ and $Y$ be FM-equivalent smooth proper varieties over a
    perfect field $k$ of positive characteristic $p$. If $X$ lifts to characteristic 0 (or lifts to $W_2(k)$,
    etc.), does $Y$ also lift?
\end{question}

\section{Topological Hochschild homology}\label{sec:Topological Hochschild homology}

In this section, we introduce the main tools of this paper, which are
topological analogs of the invariants of the previous section. Here,
topological means that one works relative to the sphere spectrum $\SS$, which
is the initial commutative ring (spectrum) in homotopy theory. To any stable
$\infty$-category or dg category $\Cscr$, one can associate a spectrum
$\THH(\Cscr)=\HH(\Cscr/\SS)$ with $S^1$-action. This is again a noncommutative
invariant and there are various analogs of the spectral sequences of the
previous section. We will especially be interested in the {\bf topological periodic cyclic homology}
\[
  \TP(\Cscr)=\THH(\Cscr)^{tS^1}.
\]
Topological Hochschild homology $\THH(\Cscr)$ is equipped with an even richer
structure than simply an $S^1$-action: it is a cyclotomic spectrum, a notion
introduced by B\"okstedt--Hsiang--Madsen~\cite{bhm} to study algebraic
$K$-theory and recently recast by Nikolaus and
Scholze in~\cite{nikolaus-scholze}. We use the following definition, which is a slight alteration of the main
definition of~\cite{nikolaus-scholze}.

\begin{definition}
    A {\bf $p$-typical cyclotomic spectrum} is a spectrum $X$ with an
    $S^1$-action together with an $S^1$-equivariant map $X\rightarrow
    X^{tC_p}$, called the cyclotomic Frobenius, where $X^{tC_p}$ is equipped
    with the $S^1$-action coming from the isomorphism $S^1\iso S^1/C_p$.
    We let $\CycSp_p$ denote the stable $\infty$-category of $p$-typical cyclotomic
    spectra. If $k$ is a commutative ring, then $\THH(k)$ is a commutative
    algebra object of $\CycSp_p$ and we let $\CycSp_{\THH(k)}$ denote the
    stable $\infty$-category of $\THH(k)$-modules in $p$-typical cyclotomic
    spectra.
\end{definition}

The exact nature of a cyclotomic spectrum
will not concern us much, except in the extraction of homotopy objects with
respect to a natural $t$-structure on cyclotomic spectra studied in~\cite{antieau-nikolaus}.

\begin{definition}\label{def:dieudonne modules etc}
    A {\bf $p$-typical Cartier module} is an abelian group $M$ equipped with endomorphisms $F$
    and $V$ such that $FV=p$ on $M$. A {\bf Dieudonn\'e module} over a perfect
    field $k$ is a $W=W(k)$-module $M$ equipped with endomorphisms $F$ and $V$ satisfying $FV=VF=p$ and which are compatible with the Witt vector Frobenius $\sigma$ in the sense that
    \[
      F(am)=\sigma(a)F(m)\hspace{1cm}\mbox{and}\hspace{1cm} V(\sigma(a)m)=aV(m)
    \]
    for $a\in W$ and $m\in M$. Note that we do not require that $M$ is finitely generated or torsion free.
    
    A Cartier or Dieudonn\'e module $M$ is {\bf derived $V$-complete} if the
    natural map $M\rightarrow\R\lim_n M//V^n$ is an equivalence, where $M//V^n$
    is the cofiber of $V^n\colon M\rightarrow M$ in the derived category of
    abelian groups. Let $\widehat{\Cart}_p$ denote the abelian category of
    derived $V$-complete Cartier modules and let $\widehat{\Dieu}_k$ denote the
    abelian category of derived $V$-complete Dieudonn\'e modules over $k$.
\end{definition}

The full subcategory of $\CycSp_p$ of $p$-typical cyclotomic spectra $X$ such
that $\pi_iX=0$ for $i<0$ defines the connective part of a $t$-structure on
$\CycSp_p$ and similarly for $\CycSp_{\THH(k)}$. The main theorem
of~\cite{antieau-nikolaus} identifies the heart.

\begin{theorem}[\cite{antieau-nikolaus}]\label{thm:heart}
    Let $k$ be a perfect field of positive characteristic $p$.
    There are equivalences of abelian categories $\CycSp_p^\heart\we\widehat{\Cart}_p$ and
    $\CycSp_{\THH(k)}^\heart\we\widehat{\Dieu}_k$.
\end{theorem}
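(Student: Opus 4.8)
The plan is to analyze the heart directly through the underlying spectrum together with the cyclotomic Frobenius. Since the $t$-structure on $\CycSp_p$ is defined so that connectivity is measured on the underlying spectrum, an object $X$ of $\CycSp_p^\heart$ has underlying spectrum an Eilenberg--MacLane spectrum $HM$ with $M=\pi_0X$, equipped with an $S^1$-action and with the cyclotomic Frobenius $\varphi\colon HM\to(HM)^{tC_p}$. The first step is to pin down the $S^1$-action. Because truncation in $\Fun(BS^1,\Sp)$ is computed pointwise, the heart of $\Sp^{BS^1}$ is $\Fun(BS^1,\Ab)$, and since $BS^1$ is simply connected this is just $\Ab$; so the only genuine datum carried by a heart object beyond the abelian group $M$ is the Frobenius $\varphi$.

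The second step is to compute the target and unwind $\varphi$. Using the Tate spectral sequence $\widehat{\H}^s(C_p,\pi_tHM)\Rightarrow\pi_{t-s}((HM)^{tC_p})$ and the fact that $\pi_tHM$ is concentrated in $t=0$, so that there is no room for differentials, I would show that $(HM)^{tC_p}$ is $2$-periodic with $\pi_{2i}=M/p$ and $\pi_{2i+1}=M[p]$. Unwinding the homotopy fixed point spectral sequence for $\Map_{\Sp^{BS^1}}(HM,(HM)^{tC_p})$, in which the even homotopy $M/p$ and odd homotopy $M[p]$ are woven together by the degree-two generator of $H^*(BS^1)$, I expect to recover precisely a pair of endomorphisms $F,V$ of $M$ subject to $FV=p$. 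The relation $FV=p$ traces back to the norm cofiber sequence $(HM)_{hC_p}\xrightarrow{\Nm}(HM)^{hC_p}\to(HM)^{tC_p}$ computing the Tate construction, whose norm map is multiplication by $p$ on $\pi_0$.

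The third step, which I expect to be the main obstacle, is to identify the homotopy-theoretic finiteness built into the heart with the algebraic derived $V$-completeness. The point is that the reconstruction of the cyclotomic object from its Borel underlying data with Frobenius proceeds through an inverse limit along a tower whose transition maps are governed by $V$, and the condition $\pi_iX=0$ for $i<0$ forces this limit to converge; concretely I would show that $M\to\R\lim_n M//V^n$ is an equivalence precisely because the negative homotopy of the iterated Tate constructions must vanish. Establishing that the comparison functor $X\mapsto(\pi_0X,F,V)$ is fully faithful and essentially surjective then reduces to a mapping-space computation showing that no higher obstructions survive, i.e.\ that $\Map_{\CycSp_p}$ between heart objects is discrete with $\pi_0$ the Cartier-module homomorphisms. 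This is where the completeness condition does real work, since without it the relevant mapping groups would be infinitely generated by the periodicity of $(HM)^{tC_p}$.

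Finally, for the module version I would bootstrap from the absolute case. Since $\pi_0$ of the relevant fixed points of $\THH(k)$ is the Witt ring $W=W(k)$, carrying its Frobenius and Verschiebung, a module $X$ over $\THH(k)$ in the heart acquires a $W$-module structure on $M=\pi_0X$ for which $F$ is $\sigma$-semilinear, $V$ satisfies the compatibility of Definition~\ref{def:dieudonne modules etc}, and $FV=VF=p$; derived $V$-completeness transports verbatim. This identifies $\CycSp_{\THH(k)}^\heart$ with $\widehat{\Dieu}_k$, compatibly with the forgetful functors to $\widehat{\Cart}_p$.
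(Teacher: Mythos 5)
Your proposal founders at its very first step, and the failure is fatal to the whole architecture. In the t-structure of \cite{antieau-nikolaus}, only the \emph{connective} half is detected on underlying spectra; the coconnective half is defined by right-orthogonality against $1$-connective cyclotomic spectra, and this is \emph{strictly weaker} than coconnectivity of the underlying spectrum. The reason is exactly the $2$-periodicity you compute in your second step: for a cyclotomic spectrum $X$ whose underlying spectrum is Eilenberg--MacLane, the mapping spectrum in $\CycSp_p$ out of a highly connective object $Y$ is the equalizer of two maps to the Borel mapping spectrum into $X^{tC_p}$, which is $2$-periodic and hence not coconnective; so such maps need not vanish, and $X$ need not be coconnective in $\CycSp_p$. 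Consequently objects of $\CycSp_p^{\heart}$ are \emph{not} Eilenberg--MacLane spectra with Frobenius. The paper itself contains the decisive counterexample: $\THH(k)$ lies in the heart $\CycSp_{\THH(k)}^{\heart}$ and corresponds to the Dieudonn\'e module $W(k)$, yet $\pi_*\THH(k)\iso k[b]$ with $|b|=2$. Note also that your proposed comparison functor $X\mapsto(\pi_0 X,F,V)$ gives the wrong answer on this fundamental example: $\pi_0$ of the underlying spectrum is $k$, on which $FV=p$ forces $FV=0$, whereas the correct invariant is $\pi_0\TR(X)=\pi_0^{\cyc}(X)\iso W(k)$, which is $p$-torsion-free. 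A secondary but also genuine problem is your second step: a single $S^1$-equivariant map $\varphi\colon HM\to (HM)^{tC_p}$ is a point of a mapping space, and there is no mechanism by which it encodes a \emph{pair} of endomorphisms $F,V$ of $M$ with $FV=p$; in the actual theory $F$ comes from the cyclotomic Frobenius but $V$ comes from the $S^1/C_p\simeq S^1$ transfer built into the construction of $\TR$, not from the Frobenius datum of a single object.

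For comparison, the paper does not prove the statement at all but cites \cite{antieau-nikolaus}, whose proof runs along a genuinely different track that your proposal cannot reach: one introduces the category of \emph{topological Cartier modules} (spectra $M$ with $S^1$-action together with maps $V\colon M_{hC_p}\to M$ and $F\colon M\to M^{hC_p}$ whose composite is the norm), proves that $\TR$ implements an equivalence between bounded-below $p$-typical cyclotomic spectra and bounded-below $V$-complete topological Cartier modules, and then identifies the heart of the induced t-structure on the latter with the abelian category $\widehat{\Cart}_p$ of derived $V$-complete Cartier modules; the equivalence on hearts is $X\mapsto\pi_0\TR(X)$. Derived $V$-completeness enters through the convergence of the limit defining $\TR$ along the $V$-maps --- your third step gestures at this, but it cannot be carried out once the underlying spectrum has been (incorrectly) frozen to be $HM$. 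The module statement over $\THH(k)$ then follows as you suggest, but only after the absolute case is established by this route.
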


To a $p$-typical cyclotomic spectrum $X$, we can associate a new spectrum
$\TR(X)$ with $S^1$-action and with natural endomorphisms $F$ and $V$ making the homotopy groups of
$\TR(X)$ into Cartier modules. The construction of $\TR(X)$ was introduced by
Hesselholt in~\cite{Hesselholt}. It is proven
in~\cite{antieau-nikolaus} that $\pi_i\TR(X)\iso\pi_i^\cyc(X)$ as Cartier
modules under the equivalence of Theorem~\ref{thm:heart}, where $\pi_i^\cyc(X)$
denotes the $i$th homotopy object of $X$ with respect to the $t$-structure on
cyclotomic spectra. In the case of a
scheme $X$, we let $\TR(X)=\TR(\THH(X))$. If $k$ is a perfect field of positive characteristic $p$, then for
any cyclotomic spectrum $X\in\CycSp_{\THH(k)}$, the homotopy groups
$\TR_i(X)=\pi_i\TR(X)$ are equipped with differentials 
\[
  \TR_i(X)\xrightarrow{d}\TR_{i+1}(X)
\]
coming from the $S^1$-action making $\TR_*(X)$ into an $R$-module, where $R$ is the
Cartier--Dieudonn\'e--Raynaud ring; see Section~\ref{sec:compatible}.

\begin{example}
    When $k$ is a perfect field of positive characteristic $p$, $\THH(k)$ is in the
    heart $\CycSp_{\THH(k)}^\heart$ and corresponds to the ring of Witt vectors $W(k)$ with its Witt
    vector Frobenius and Vershiebung maps. In this language, the result is due
    to~\cite[Theorem~2]{antieau-nikolaus}, but the underlying computation that
    $\pi_*\TR(k)\iso W(k)$ is due to Hesselholt--Madsen~\cite[Theorem~5.5]{HM97}.
    The fact should be compared to the fundamental computation of B\"okstedt
    which says that $\pi_*\THH(k)\iso k[b]$ where $|b|=2$
    (see~\cite{bokstedt} for the case of $k=\FF_p$
    and~\cite[Corollary~5.5]{HM97} for the general case).
\end{example}
    
Let $A$ be a smooth commutative $k$-algebra where $k$ is a perfect field of
positive characteristic $p$. The
homotopy groups of $\THH(A)$ were computed in~\cite{Hesselholt}, where it
is shown that $\pi_*\THH(A)\iso\Omega^*_{A/k}\otimes_kk[b]$, where $|b|=2$ and
$\Omega^i_{A/k}$ lives in homological degree $i$. This is analogous to the HKR
isomorphism for Hochschild homology, but more complicated thanks to B\"okstedt's
class $b$. 

However, when working with $\TR$ instead, Hesselholt
proved in~\cite{Hesselholt} an exact de Rham--Witt analog of the HKR isomorphism: there is a
graded isomorphism $\TR_*(A)\iso W\Omega^*_A$ compatible with the $F$, $V$, and $d$
operations, where $W\Omega^\bullet_A$ is the de Rham--Witt complex of $A$ as
studied in~\cite{illusie-derham-witt}.\footnote{We will use $W\Omega^*_A$ to
denote the graded abelian group underlying the complex
$W\Omega^\bullet_A$.}

Now, we can define the topological or crystalline analogs of the four
spectral sequences from Section~\ref{sec:Hochschild homology}.

\begin{definition}\label{def:four spectral sequences}
Let $X$ be a smooth proper scheme over a perfect field $k$ of positive
characteristic $p$ and let $\Cscr$ be a smooth proper dg category over $k$.
    \begin{enumerate}
        \item[(a)] Using Hesselholt's local calculation, the {\bf descent
            spectral sequence} for $\TR$ is
            \begin{equation}\label{eq:Hesselholt SS}
                \E_2^{s,t}=\H^t(X,W\Omega^s_{X})\Rightarrow\TR_{s-t}(X).
            \end{equation} 
            With this indexing, the differentials $d_r$ have bidegree $(r-1,r)$.
            This is the topological analog of~\eqref{eq:HKR spectral sequence}.
        \item[(b)]    There is a {\bf Tate spectral sequence}
            \begin{equation}\label{eq:Tate for TP}
                \E_2^{s,t}=\widehat{\H}^s(\CC\PP^\infty,\TR_t(\Cscr))\Rightarrow\TP_{t-s}(\Cscr)
            \end{equation}
            computing $\TP(\Cscr)$,
            with differentials $d_r$ of bidegree $(r,r-1)$. This is the $\TP$
            analog of~\eqref{eq:Tate to HP}. It was constructed
            in~\cite[Corollary~10]{antieau-nikolaus}. See Figure~\ref{eq:Tate for
            TP visual}. By analogy with~\eqref{eq:Tate to HP}, this spectral
        sequence could be called the ``non-commutative slope spectral
    sequence''.
        \item[(c)] There is a {\bf
            crystalline--$\TP$ spectral sequence}
            \begin{equation}\label{eq:crystalline to TP}
                \E_2^{s,t}=\H^{s-t}_{\crys}(X/W)\Rightarrow\TP_{-s-t}(X),
            \end{equation}
            with differentials $d_r$ of bidegree $(r,1-r)$.
            This is the topological analog of~\eqref{eq:de Rham to HP} and is due to~\cite{bms2}.
        \item[(d)] The Hodge--de
            Rham spectral sequence~\eqref{eq:Hodge to de Rham} is replaced with the {\bf slope spectral sequence}
            \begin{equation}\label{eq:slope SS}
                \E_1^{s,t}=\H^t(X,W\Omega^s_X)\Rightarrow\H^{s+t}_{\cris}(X/W)
            \end{equation}
            of~\cite{illusie-derham-witt}; it has differentials $d_r$ of
            bidegree $(r,1-r)$.
    \end{enumerate}
\end{definition}

Figure~\ref{fig:SS diamond 2} gives the topological analog of Figure~\ref{fig:SS diamond 1}.

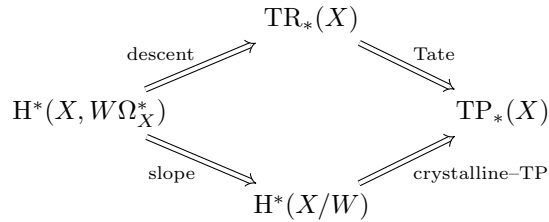
\begin{figure}[H]
  \centering
  \begin{tikzcd}
    &\TR_*(X)\arrow[Rightarrow]{dr}{\text{Tate}}&\\
    \H^*(X,W\Omega^*_X)\arrow[Rightarrow]{ur}{\text{descent}}\arrow[Rightarrow]{dr}[swap]{\text{slope}}&&\TP_*(X)\\
    &\H^*(X/W)\arrow[Rightarrow]{ur}[swap]{\text{crystalline--TP}}&
  \end{tikzcd}
  \caption{Four spectral sequences associated to a smooth proper scheme $X$ over a perfect field $k$ of positive characteristic.}
  \label{fig:SS diamond 2}
\end{figure}

\begin{remark}
    As in the case of Hochschild homology, the Tate spectral sequence is a
    noncommutative invariant, but {\em a priori} the other three spectral sequences are not.
\end{remark}

\begin{remark}\label{rem:differentials are compatible}
    The $W$-modules appearing in Figure \ref{fig:SS diamond 2} are equipped with various extra structures, and the interactions of these structures with the four spectral sequences will be critical in our analysis. Specifically,
    \begin{enumerate}
        \item[{\rm (1)}] the groups $\H^*(W\Omega^*_X)$ and $\TR_*(X)$ are Dieudonn\'{e} modules, and the descent spectral sequence~\eqref{eq:Hesselholt SS} takes place in the abelian category of derived $V$-complete Dieudonn\'e modules. In particular, the differentials on each page commute with $F$ and $V$.
        \item[{\rm (2)}] The groups $\H^*(X/K)=\H^*(X/W)\otimes_WK$ and $\TP_*(X)\otimes_WK$ are $F$-isocrystals (see Definition \ref{def:isocrystal}), and up to certain Tate twists, the crystalline--TP spectral sequence~\eqref{eq:crystalline to TP} is compatible with these $F$-isocrystal structures.
        \item[{\rm (3)}] The differentials in the slope~\eqref{eq:slope SS} and Tate~\eqref{eq:Tate for TP} spectral sequences do \textit{not} commute with $F$ and $V$. Rather, they satisfy the relations in Figure~\ref{fig:Raynaud relations}.
    \end{enumerate}
\end{remark}

\begin{figure}[H]
    \centering
    \begin{tikzcd}[row sep=small]
        &\vdots              & \hspace{.1cm} & \vdots & \hspace{.1cm} & \vdots & \hspace{.1cm}    \\
        \cdots&\TR_{t+1}(\Cscr)            & 0      & \TR_{t+1}(\Cscr) & 0      & \TR_{t+1}(\Cscr)  & \cdots\\
        \cdots&\TR_t(\Cscr)\arrow{urr} & 0      & \TR_t(\Cscr)\arrow{urr}& 0 & \TR_t(\Cscr)  & \cdots\\
        \cdots&\TR_{t-1}(\Cscr)\arrow{urr} & 0      & \TR_{t-1}(\Cscr)\arrow{urr}& 0      & \TR_{t-1}(\Cscr)  & \cdots\\
        &\vdots&&\vdots&&\vdots
    \end{tikzcd}
    \caption{A portion of the $\E_2$-page of the Tate spectral sequence~\eqref{eq:Tate for TP}
        computing $\TP$. The Tate spectral sequence is $2$-periodic in the
        columns and for $\Cscr$ smooth and proper over a perfect field of
        characteristic $p$ it is bounded in the rows
        by~\cite[Corollary~5]{antieau-nikolaus}.}
\label{eq:Tate for TP visual}
\end{figure}

We will need the following proposition.

\begin{proposition}\label{prop:rational degeneration}
    If $X$ is smooth and proper over a perfect field of positive characteristic
    $p$, then the four spectral sequences of Definition~\ref{def:four spectral
    sequences} degenerate rationally.
\end{proposition}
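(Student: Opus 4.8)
The plan is to reduce the degeneration of all four sequences to dimension counts over $K=W[p^{-1}]$ within the diamond of Figure~\ref{fig:SS diamond 2}, exploiting that the two descent-type sequences \eqref{eq:Hesselholt SS} and \eqref{eq:slope SS} share the page $\H^t(X,W\Omega^s_X)$, while the two Tate-type sequences \eqref{eq:Tate for TP} and \eqref{eq:crystalline to TP} share the abutment $\TP_*(X)$. Since $\dim X=d<\infty$ each antidiagonal is finite, and the Tate-type sequences are $2$-periodic with finitely many nonzero rows by \cite{antieau-nikolaus}, so every total degree receives only finitely many contributions. Thus, after $\otimes_W K$, degeneration of any one of the four is equivalent to the equality of the total $K$-dimension of its abutment with that of its $\E_2$-page. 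Set $P=\sum_{s,t}\dim_K\big(\H^t(X,W\Omega^s_X)\otimes_W K\big)$.

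First I would treat the two descent-type sequences. Rational degeneration of the slope spectral sequence \eqref{eq:slope SS} is the theorem of Illusie--Raynaud \cite{illusie-raynaud,illusie-derham-witt}; it gives $\sum_n\dim_K\H^n(X/K)=P$. For the descent sequence \eqref{eq:Hesselholt SS}, recall from Remark~\ref{rem:differentials are compatible}(1) that it is a sequence of Dieudonn\'e modules, so every differential commutes with $F$ and becomes, after $\otimes_W K$, a morphism of $F$-isocrystals. The differential $d_r$ has bidegree $(r-1,r)$, hence runs from $\H^t(W\Omega^s_X)\otimes_W K$ to $\H^{t+r}(W\Omega^{s+r-1}_X)\otimes_W K$, and by Illusie's slope bound \cite{illusie-derham-witt} the Frobenius slopes on source and target lie in the disjoint windows $[s,s+1)$ and $[s+r-1,s+r)$ for $r\geq 2$. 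A morphism of $F$-isocrystals with disjoint slope support vanishes, so $d_r\otimes_W K=0$ and \eqref{eq:Hesselholt SS} degenerates rationally; consequently $\sum_i\dim_K\big(\TR_i(X)\otimes_W K\big)=P$ as well.

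It remains to treat the two Tate-type sequences, and here the two degenerations are \emph{equivalent}: both abut to $\TP_*(X)$, and since I have just shown $\sum_i\dim_K(\TR_i\otimes_W K)=\sum_n\dim_K\H^n(X/K)=P$, collapse of \eqref{eq:Tate for TP} and collapse of \eqref{eq:crystalline to TP} each amount to the single identity $\dim_K\big(\TP_0(X)\otimes_W K\big)+\dim_K\big(\TP_1(X)\otimes_W K\big)=P$. The obstacle is that, unlike the descent case, these differentials do \emph{not} commute with $F$ (Remark~\ref{rem:differentials are compatible}(3)), so no direct slope argument is available. I would collapse \eqref{eq:Tate for TP}: its leading differential $d_2$, of bidegree $(2,1)$, is the Connes operator $\TR_t(X)\to\TR_{t+1}(X)$, which under Hesselholt's identification $\TR_*\cong W\Omega^*$ \cite{Hesselholt} is the de Rham--Witt differential $d$; rational degeneration of \eqref{eq:slope SS} says precisely that $d$ induces zero on $\H^*(W\Omega^*_X)\otimes_W K$, whence $d_2\otimes_W K=0$.

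The hard part will be promoting this to the vanishing of all higher differentials of \eqref{eq:Tate for TP}, since these are secondary operations in the circle action and $d_2=0$ is not formally sufficient. I would exploit that after inverting $p$ we are in characteristic zero, where $\TR(X)\otimes_W K$ is a $K$-module spectrum with $S^1$-action, i.e.\ a mixed complex over $K$ whose Connes operator is rationally trivial on homology by the previous paragraph; rational collapse of \eqref{eq:Tate for TP} then becomes a formality/degeneration statement for this mixed complex, the exact analog over $K$ of the degeneration of the noncommutative Hodge--de Rham sequence \eqref{eq:Tate to HP}, and I expect this to be the crux. Once \eqref{eq:Tate for TP} is known to degenerate rationally, the dimension count above forces $\dim_K(\TP_0\otimes_W K)+\dim_K(\TP_1\otimes_W K)=P$, and hence \eqref{eq:crystalline to TP} degenerates rationally as well, completing all four cases.
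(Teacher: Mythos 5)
Your dimension-counting skeleton is exactly the paper's, but both of the direct arguments you use to seed it fail, so the proof does not close. The fatal gap is the slope argument for the descent spectral sequence \eqref{eq:Hesselholt SS}. Its differentials commute with the de Rham--Witt operator $F$ (Remark~\ref{rem:differentials are compatible}(1)), and with respect to $F$ \emph{every} term $\H^t(X,W\Omega^s_X)\otimes_W K$ has slopes in $[0,1)$: the isomorphism \eqref{eq:slopes of crystalline coho} reads $(\H^{j-i}(W\Omega^i_X),p^iF)\iso\H^j(X/K)_{[i,i+1)}$, so the window $[s,s+1)$ you invoke is the slope window for the \emph{twisted} operator $p^sF$, not for $F$. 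Since $d_r$ commutes with $F$, it satisfies $d_r\circ(p^sF)=p^{1-r}\,(p^{s+r-1}F)\circ d_r$, i.e.\ it fails to intertwine the crystalline normalizations of source and target precisely by the factor $p^{r-1}$; there is no single Frobenius structure respected by $d_r$ for which the source and target windows are disjoint, and with respect to $F$ both windows are $[0,1)$, so no vanishing follows. (Contrast the slope spectral sequence \eqref{eq:slope SS}, whose differential satisfies $dF=pFd$: that genuine slope shift is exactly why Illusie--Raynaud's rational degeneration \cite{illusie-raynaud} works --- the mechanism you are trying to transplant lives at the other corner of the diamond.)

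The second gap you flag yourself: the higher differentials of the Tate spectral sequence \eqref{eq:Tate for TP}. There is no ``formality over $K$'' theorem to invoke here --- Kaledin-type noncommutative Hodge--de Rham degeneration \cite{kaledin-nhdr,mathew-degeneration} applies to Hochschild homology of smooth proper dg categories over characteristic-zero fields, and $\TR(X)\otimes_W K$ with its $S^1$-action is not presented as such an object --- so your ``crux'' step is genuinely open. Worse, once the descent step fails, so does your identity $\sum_i\dim_K(\TR_i(X)\otimes_WK)=P$, and with it the asserted equivalence between degeneration of \eqref{eq:Tate for TP} and of \eqref{eq:crystalline to TP}. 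The paper closes the diamond from the corner you left untouched: rational degeneration of the crystalline--$\TP$ spectral sequence \eqref{eq:crystalline to TP} is a theorem of Elmanto \cite{elmanto}, via an argument of Scholze, and then \emph{both} remaining sequences degenerate by exactly the sandwich you set up: writing $b_n=\dim_K\H^n(X/K)$, one has $\sum_n b_n=\dim_K(\TP_0(X)\otimes_WK)+\dim_K(\TP_1(X)\otimes_WK)\leq\sum_i\dim_K(\TR_i(X)\otimes_WK)\leq P=\sum_n b_n$, where the last equality is Illusie--Raynaud, forcing equality throughout. So keep your counting framework, but replace both of your direct arguments by the single external input at the $\TP$ corner.
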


\begin{proof}
    For the slope spectral sequence, this is due to Illusie--Raynaud~\cite{illusie-raynaud}.
    For the crystalline--$\TP$ spectral sequence, it is proved by Elmanto in~\cite{elmanto}
    using an argument of Scholze. The other two cases follow by counting
    dimensions.
\end{proof}

\begin{remark}
    Suppose that $\Cscr$ is a smooth proper dg category over a perfect field
    $k$ and suppose that $\Cscr$ lifts to a smooth proper dg category
    $\widetilde{\Cscr}$ over $W$. Then, $\TP(\Cscr)$ is
    equivalent to $\HP(\widetilde{\Cscr}/W)$ by an unpublished
    argument of Scholze. In this way, one might view $\TP$ as a noncommutative
    version of crystalline cohomology. See also forthcoming work of Petrov and
    Vologodsky.
\end{remark}

\section{Compatibility of the descent and slope spectral
sequences}\label{sec:compatible}

For the remainder of the paper, we use the following notation.

\begin{notation}\label{notation:k}
  Unless otherwise stated, $k$ denotes a perfect field $k$ of characteristic $p>0$. We write $W=W(k)$ for the ring of Witt vectors of $k$ and $K=W[p^{-1}]$ for the field of fractions of $W$. We let $\sigma$ denote the Frobenius on $W$.
\end{notation}

Let $X$ be a smooth proper scheme over $k$. The Hodge--Witt cohomology groups
$\H^*(W\Omega^*_X)$ and the homotopy groups $\TR_*(X)$ are modules over $W$ and are equipped with $\sigma$-linear operators
$F,V$, which satisfy the relations $FV=VF=p$. That is, they are Dieudonn\'{e}
modules in the sense of Definition \ref{def:dieudonne modules etc}. The
differentials
$\H^j(W\Omega^i_X)\rightarrow\H^j(W\Omega^{i+1}_X)$ and
$\TR_i(X)\rightarrow\TR_{i+1}(X)$ do not commute with $F$ and $V$, but instead
satisfy the relations of Figure \ref{fig:Raynaud relations}. Following Illusie--Raynaud \cite{illusie-raynaud}, we formalize these
properties using the {\bf Cartier--Dieudonn\'{e}--Raynaud algebra} relative to
$k$, which is the graded ring
\begin{equation}
    R=R^0\oplus R^1
\end{equation}
generated by $W$ and by operations $F$ and $V$ in degree $0$ and by $d$ in degree
$1$, subject to the relations of Figure~\ref{fig:Raynaud relations}.

\begin{figure}[H]
    \centering
    \begin{align*}
        FV&=VF=p\\
        Fa&=\sigma(a)F\mbox{ for all }a\in W\\
        V\sigma(a)&=aV\mbox{ for all }a\in W\\
        da&=ad\mbox{ for all }a\in W\\
        d^2&=0\\
        FdV&=d
    \end{align*}
    \caption{The relations in the Raynaud ring.}
    \label{fig:Raynaud relations}
\end{figure}

Note that $VF=p=FV$ and $FdV=d$ imply the standard relations $Vd=pdV$ and $dF=pFd$. In particular, $R^0=W_{\sigma}[F,V]$ is the usual Raynaud algebra. Note also that a (left) module over $R^0$ is the same thing as a Dieudonn\'{e} module in the sense of Definition \ref{def:dieudonne modules etc}. We will use the terms interchangeably. A graded left $R$-module is a complex
\[
  M^\bullet=[\cdots\to M^{i-1}\xrightarrow{d} M^i\xrightarrow{d}M^{i+1}\to\cdots]
\]
of $R^0$-modules whose differential $d$ satisfies $FdV=d$. We will refer to
such an object simply as an $R$\textbf{-module}.
\begin{remark}
  A similar structure is studied by Bhatt--Lurie--Mathew in~\cite{blm}. They introduce the notion of a Dieudonn\'e complex. A saturated Dieudonn\'e complex in the sense of~\cite[Definition~2.2.1]{blm} is naturally an $R$-module for the Raynaud algebra relative to $\FF_p$ by~\cite[Proposition~2.2.4]{blm}.
\end{remark}

If $M$ is an $R$-module, we denote by $M[n]$ the graded module with degree
shifted by $n$ so that $M[n]^i=M^{i-n}$. Note that this notation is not consistent with that of
\cite{ekedahl,MR765411}, although we will use the sign conventions from {\em op. cit}.

For each $j$, the differentials in the de Rham--Witt complex make the complex
\begin{equation}\label{eq:Here is an R module}
  \H^{j}(W\Omega^{\bullet}_X)\defeq[0\to\H^j(W\mathscr{O}_X)\xrightarrow{d}\H^j(W\Omega^1_X)\xrightarrow{d}\H^j(W\Omega^2_X)\xrightarrow{d}\cdots]
\end{equation}
into an $R$-module, where $\H^j(W\Omega^i_X)$ is placed in degree $i$ (see~\cite{illusie-derham-witt}).\footnote{In \cite{MR765411} this complex is denoted
by $\R^j\Gamma(X,W\Omega^{\bullet}_X)$ (see page 190). We will avoid this
notation due to its potential for confusion with the $j$-th hypercohomology of
$W\Omega^{\bullet}_X$.} Similarly, by~\cite[Section~6.2]{antieau-nikolaus}, the $S^1$-action on $\TR(X)$ gives rise to a complex
\begin{equation}\label{eq:TR complex}
  \TR_{\bullet}(X)\defeq
  [\cdots\xrightarrow{d}\TR_{t-1}(X)\xrightarrow{d}\TR_t(X)\xrightarrow{d}\TR_{t+1}(X)\xrightarrow{d}\cdots]
\end{equation}
which is an $R$-module, where $\TR_t(X)$ is placed in degree $t$.

The $\E_1$-page of the slope spectral sequence~\eqref{eq:slope SS} is the same
as the $\E_2$-page of the descent spectral sequence~\eqref{eq:Hesselholt SS},
and is depicted in Figure \ref{fig:E1 and E2 pages}.

\begin{figure}[H]
    \centering
\begin{equation*}
    \begin{tikzcd}
      &\vdots&\vdots&\vdots&\\
      \cdots\arrow{r}&\H^{t+3}(W\Omega^{s-1}_X)\arrow{r}&\H^{t+3}(W\Omega^s_X)\arrow{r}
        &\H^{t+3}(W\Omega^{s+1}_X)\arrow{r}&\cdots\\
      \cdots\arrow{r}&\H^{t+2}(W\Omega^{s-1}_X)\arrow{r}&\H^{t+2}(W\Omega^s_X)\arrow{r}&\H^{t+2}(W\Omega^{s+1}_X)\arrow{r}&\cdots\\
      \cdots\arrow{r}&\H^{t+1}(W\Omega^{s-1}_X)\arrow{r}\arrow{uur}
        &\H^{t+1}(W\Omega^s_X)\arrow{r}\arrow{uur}&\H^{t+1}(W\Omega^{s+1}_X)\arrow{r}
        &\cdots\\
      \cdots\arrow{r}&\H^t(W\Omega^{s-1}_X)\arrow{r}\arrow{uur}&\H^t(W\Omega^s_X)\arrow{r}\arrow{uur}&\H^t(W\Omega^{s+1}_X)\arrow{r}&\cdots\\
            &\vdots&\vdots&\vdots&
    \end{tikzcd}
\end{equation*}
    \caption{The $\E_1$-page of the slope spectral sequence~\eqref{eq:slope SS} (with horizontal differentials) and the $\E_2$-page
        of the descent spectral sequence for $\TR$~\eqref{eq:Hesselholt SS} (with diagonal differentials).}
    \label{fig:E1 and E2 pages}
\end{figure}

Let $\F^\star\TR(X)=\R\Gamma(X,\tau_{\geq\star}\TR(\Oscr_X))$ be the filtration
giving rise to the descent spectral sequence.
The differentials in the slope and descent spectral sequences are compatible in the following sense.

\begin{lemma}\label{lem:compatible}
    Let $X$ be a smooth proper variety over a perfect field $k$ of positive
    characteristic. The de Rham--Witt differential
    $\H^t(W\Omega^s_X)\rightarrow\H^t(W\Omega^{s+1}_X)$ is compatible with
    the descent spectral sequence~\eqref{eq:Hesselholt SS}; specifically,
    \begin{enumerate}
        \item[{\rm (i)}] there is a self-map
            $\E_*^{s,t}\rightarrow\E_*^{s+1,t}$ of degree $(1,0)$ of the descent spectral
            sequence which on the $\E_2$-page is the de Rham--Witt
            differential;
        \item[{\rm (ii)}] for each $r\geq 2$, the resulting complexes
            \[
              \E_r^{\bullet,t}=[\cdots\to \E_r^{s-1,t}\to \E_r^{s,t}\to \E_r^{s+1,t}\to\cdots ]
            \]
            are naturally $R$-modules;
        \item[{\rm (iii)}] for each $t\in\ZZ$, the differential $d_r$ in the descent spectral
            sequence is a map of $R$-modules
            $\E_r^{\bullet,t}\rightarrow\E_r^{\bullet,t+r}[r-1]$;
        \item[{\rm (iv)}] the filtration on $\TR_*(X)$ coming from the spectral
            sequence may be interpreted as a filtration by $R$-modules 
            \begin{equation*}
                0=F_{\bullet}^{\dim(X)+1}\subseteq
                F_{\bullet}^{\dim(X)}\subseteq\dots\subseteq
              F_{\bullet}^{1}\subseteq F_{\bullet}^{0}=\TR_{\bullet}(X),
            \end{equation*}
            where
            $$F_i^t=\im\left(\pi_i\F^{i+t}\TR(X)\rightarrow\TR_i(X)\right)$$
            with isomorphisms
            \begin{equation*}
              F_{\bullet}^{t}/F_{\bullet}^{t+1}\cong \E_{\infty}^{\bullet,t}[t].
            \end{equation*}
    \end{enumerate}
\end{lemma}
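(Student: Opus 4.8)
The plan is to produce the horizontal operator $d$, together with $F$ and $V$, as maps of the filtered spectrum $\F^\star\TR(X)$ itself, and then to transport the Raynaud relations of Figure~\ref{fig:Raynaud relations} from the $\E_2$-page---where they are Illusie's de Rham--Witt relations---to every page by naturality. For the horizontal operator, recall that the $S^1$-action on the sheaf $\TR(\Oscr_X)$ supplies a ``fundamental class'' map $\delta\colon\Sigma\TR(\Oscr_X)\to\TR(\Oscr_X)$ whose effect on homotopy sheaves $\pi_s\TR(\Oscr_X)\cong W\Omega^s_X$ (Hesselholt~\cite{Hesselholt}, cf.~\cite[Section~6.2]{antieau-nikolaus}) is the de Rham--Witt differential. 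Since truncation is functorial, $\delta$ is automatically a map of Postnikov-filtered sheaves, and the Postnikov filtration on $\Sigma\TR(\Oscr_X)$ is the shift by one of that on $\TR(\Oscr_X)$. Applying $\R\Gamma(X,-)$ and composing with this shift identification yields a self-map $d$ of the descent spectral sequence~\eqref{eq:Hesselholt SS} of bidegree $(1,0)$ which on $\E_2^{s,t}=\H^t(X,W\Omega^s_X)$ is $\H^t(X,-)$ of the de Rham--Witt differential. This gives~(i).

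For (ii), first observe that $d$ is induced by a map of filtered spectra, so $d^2$ is induced by the composite $\delta\circ\Sigma\delta\colon\Sigma^2\TR(\Oscr_X)\to\TR(\Oscr_X)$. The $S^1$-action makes $\TR(\Oscr_X)$ a module over the spherical group ring $\SS[S^1]\simeq\SS\oplus\Sigma\SS$, and since $S^1\simeq\B\ZZ$ the summand $\Sigma\SS$ is a square-zero ideal; as $\delta$ is multiplication by its generator, $\delta\circ\Sigma\delta\simeq 0$ and hence $d^2=0$, so each $\E_r^{\bullet,t}$ is a complex. The $W$-module structure and the operators $F,V$ arise because $\TR(\Oscr_X)$ is a sheaf of $\TR(k)$-modules with $\pi_*\TR(k)=W$ and with $F,V$ genuine self-maps; thus the descent spectral sequence is one of Dieudonn\'e modules ($=R^0$-modules), as recorded in Remark~\ref{rem:differentials are compatible}(1). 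The remaining relation $FdV=d$, together with $FV=VF=p$ and the Frobenius-semilinearity, holds on $\E_2=\H^*(X,W\Omega^*_X)$ because there these are exactly the de Rham--Witt relations of~\cite{illusie-derham-witt}; I propagate them to higher pages in the next step.

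For (iii), note that $F$ and $V$ preserve the Postnikov filtration (they preserve homotopy degree), so they give bidegree-$(0,0)$ self-maps of the spectral sequence, and the page differentials $d_r$ commute with $F$ and $V$ since the spectral sequence lives in Dieudonn\'e modules. Moreover $d$, being induced by a map of filtered spectra, commutes with every $d_r$. Hence each $d_r\colon\E_r^{\bullet,t}\to\E_r^{\bullet,t+r}[r-1]$ is a map of $R$-modules, where the shift $[r-1]$ records the $(r-1)$-shift in the $s$-direction of the bidegree-$(r-1,r)$ differential in the conventions of~\cite{ekedahl,MR765411}; this is~(iii). This also completes~(ii): since $F$, $V$, and $d$ all commute with $d_r$, they descend to $\E_{r+1}=H(\E_r,d_r)$, and the relations above---being equalities of operators---descend to this subquotient; by induction from $\E_2$ they hold on every page, so each $\E_r^{\bullet,t}$ is an $R$-module.

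Finally, for (iv), the subgroups $F_i^t=\im(\pi_i\F^{i+t}\TR(X)\to\TR_i(X))$ are stable under $F$ and $V$, which preserve $\F^\star$, and under $d$: at the filtered level $d$ is induced by $\delta$, which raises the Postnikov level by one, so $d(F_i^t)\subseteq F_{i+1}^t$. Thus $F_\bullet^t\subseteq\TR_\bullet(X)$ is an $R$-submodule and the filtration is by $R$-modules. The isomorphism $F_\bullet^t/F_\bullet^{t+1}\cong\E_\infty^{\bullet,t}[t]$ is the standard identification of the associated graded of the abutment filtration with the $\E_\infty$-page; it is $F,V,d$-equivariant because all three operators were constructed on the filtered spectrum, and the shift $[t]$ realigns the form-degree grading of $\E_\infty^{\bullet,t}$ with the homological grading of $\TR_\bullet(X)$, again following op.~cit. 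The main obstacle is the construction in the first paragraph: one must verify that the $S^1$-action genuinely refines to a map of Postnikov-filtered sheaves shifting the filtration by exactly one, so that $d$ is a self-map of the entire spectral sequence---not merely of $\E_2$---and its compatibility with the $d_r$ (including the signs introduced by the suspension shift) is exactly as claimed; granting this, the rest is naturality together with the transport of the de Rham--Witt relations along the Postnikov tower.
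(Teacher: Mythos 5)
Your construction of $d$ and your treatment of parts (i), (iii), and (iv) follow the paper's proof essentially step for step: the paper likewise produces the differential from the $S^1$-action as a filtered map $\F^\star\TR(X)\to\F^{\star+1}\TR(X)[-1]$ (your $\delta$, after taking global sections and using functoriality of Postnikov truncation), invokes Hesselholt's Theorem~C to identify the induced map on graded pieces with the de Rham--Witt differential, proves (ii) and (iii) by induction on pages with base case the $R$-module~\eqref{eq:Here is an R module}, and deduces (iv) from the filtered-level statement exactly as you do.

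The one genuine error is your justification of $d^2=0$. The augmentation ideal $\Sigma\SS\subset\SS[S^1]$ is \emph{not} square-zero: the splitting $\SS[S^1]\simeq\SS\oplus\Sigma\SS$ is an equivalence of spectra, not of ring spectra, and the fundamental class $\delta\in\pi_1\SS[S^1]$ satisfies $\delta^2=\eta\delta$, which is the nonzero element of the summand $\pi_1(\SS)\cdot\delta\subset\pi_2\SS[S^1]$ ($\eta$ the Hopf class). Consequently, for a general spectrum with $S^1$-action, Connes' operator satisfies $d^2=\eta d$ rather than $d^2=0$; the square-zero statement you want is true after base change to $\H\ZZ$ (which is why mixed complexes appear in classical cyclic homology) but fails over the sphere. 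In the present situation the conclusion nonetheless holds, and there are two ways to repair the argument. First, $\TR(\Oscr_X)$ is a sheaf of $\TR(k)$-modules and $\pi_1\TR(k)=0$ (since $\pi_*\TR(k)\iso W$ is concentrated in degree zero), so $\eta$ acts as zero on all homotopy groups in sight and $d^2=\eta d=0$. Second---and this is what the paper does, and what your own induction in step (iii) already accomplishes---the relation $d^2=0$ holds on the $\E_2$-page, where it is one of Illusie's de Rham--Witt relations \cite{illusie-derham-witt}, and since every later page is a subquotient compatible with $F$, $V$, and $d$, it descends to all pages together with the other relations of Figure~\ref{fig:Raynaud relations}. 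So you should delete the square-zero claim and simply fold $d^2=0$ into the list of relations propagated from $\E_2$; with that change your proof coincides with the paper's.
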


Note that for each $i$, the filtration on $\TR_i(X)$ induced by the $F_i^t$ is
a shift of the usual filtration coming from the descent spectral sequence.

\begin{proof}
    Consider the sheaf $\TR(\Oscr_X)$ of spectra with $S^1$-action on the
    Zariski site of $X$ and the filtration
    $\F^\star\TR(\Oscr_X)=\tau_{\geq\star}\TR(\Oscr_X)$ arising from the Postnikov
    tower in sheaves of spectra with $S^1$-action. The graded piece
    $\gr^t\TR(\Oscr_X)$ is equivalent to $W\Omega^t_X[t]$. Taking global
    sections, we obtain a filtration
    $\F^\star\TR(X)=\F^\star\R\Gamma(X,\TR(\Oscr_X))$ with graded pieces
    $\gr^t\TR(X)=\R\Gamma(X,W\Omega^t_X)[t]$. The descent spectral sequence
    is by definition the spectral sequence of this filtration (with a
    commonly-used
    reindexing so that it begins with the $\E_2$-page). Now, consider the $S^1$-action on $\TR(\Oscr_X)$. This induces a map
    $\TR(\Oscr_X)\rightarrow\TR(\Oscr_X)[-1]$
    which automatically respects the filtration since it is just the canonical
    Postnikov filtration. In particular, this means that
    $\TR(X)\rightarrow\TR(X)[-1]$ induces a filtered map
    $$\F^\star\TR(X)\rightarrow\F^{\star+1}\TR(X)[-1].$$
    It follows that there is a map of spectral sequences
    associated to the two filtrations. But the spectral sequence coming from
    $\F^{\star+1}\TR(X)[-1]$ is just a re-grading of spectral sequence coming
    from $\F^\star\TR(X)$. In particular, we can view the differential as a
    self-map of the spectral sequence~\eqref{eq:Hesselholt SS}
    of degree $(1,0)$. On the graded pieces of the filtrations, we get maps
    $\R\Gamma(X,W\Omega^t_X)[t]\rightarrow\R\Gamma(X,W\Omega^{t+1}_X)[t].$
    Hesselholt checks in~\cite[Theorem~C]{Hesselholt} that this map is induced
    by the de Rham--Witt differential $W\Omega^t_X\rightarrow W\Omega^{t+1}_X$.
    In particular, on the $\E_2$-page, we see the de Rham--Witt differential.
    This proves part (i).

    We have already remarked that the spectral sequence~\eqref{eq:Hesselholt
    SS} takes places in the abelian category of Dieudonn\'e modules; in other
    words, the differentials in the descent spectral sequence commute with $F$
    and $V$. But, by part (i), the differentials in the descent spectral
    sequence also commute with the de Rham--Witt differential $d$ in the sense that
    for all $t$ and $r\geq 2$ the diagram
    $$\xymatrix{
        \E_r^{s,t}\ar[r]\ar[d]&\E_r^{s+1,t}\ar[d]\\
        \E_r^{s+r-1,t+r}\ar[r]&\E_r^{s+r,t+r}
    }$$ commutes. Now, parts (ii) and (iii) follow by induction where the base
    case is the $R$-module~\eqref{eq:Here is an R module}.

    Finally, to prove part (iv), it is enough to note that the filtered map
    $d\colon\F^\star\TR(X)\rightarrow\F^{\star+1}\TR(X)[-1]$ implies that the
    inclusion $F^t_\bullet\subseteq\TR_\bullet(X)$ is compatible with the
    differential. Since it is also compatible with $F$ and $V$ and since
    $\TR_\bullet(X)$ is an $R$-module, it follows that $F^t_\bullet$ is an
    $R$-submodule. This completes the proof.
\end{proof}

To investigate the fine structure of the Hodge--Witt cohomology groups, Illusie
and Raynaud \cite{illusie-raynaud} introduced a certain subcategory of the
category of $R$-modules, which we briefly review. Forgetting the $F$ and $V$
operations, an $R$-module gives rise to a complex of $W(k)$-modules with
cohomology groups $\H^*(M)$. This complex comes equipped with a
canonical decreasing $(V+dV)$-filtration given by $\mathrm{Fil}^n M^i=V^n M^i+dV^n
M^{i-1}\subseteq M^i$. We say that $M$ is {\bf complete} if each $M^i$ is
complete and separated for the $(V+dV)$-topology. We say that a complete
$R$-module $M$ is {\bf profinite} if $M^i/\mathrm{Fil}^n M^i$ has finite length as a
$W(k)$-module for each $i$ and $n$.
An $R$-module $M$ is {\bf
coherent} if it is bounded (i.e., $M^i=0$ for $|i|$ sufficiently large),
profinite, and $\H^i(M)$ is a finitely generated
$W$-module for all $i$ (see \cite[Th\'{e}or\`{e}me~I.3.8]{illusie-raynaud} and
\cite[D\'efinition~I.3.9]{illusie-raynaud}).

Illusie and Raynaud showed in~\cite[Th\'eor\`eme~II.2.2]{illusie-raynaud} that
the Hodge--Witt complex $\H^j(W\Omega^\bullet_X)$ is coherent for each
$j$.\footnote{Note that this implies that while some $\H^j(W\Omega^i_X)$
might be non-finitely generated as a $W(k)$-module, the terms appearing in the
$\E_2$-page of the slope spectral sequence~\eqref{eq:slope SS} are all finitely generated
$W(k)$-modules.}
In the derived setting, we consider the $R$-module
$\TR_{\bullet}(X)$~\eqref{eq:TR complex}.

\begin{proposition}\label{prop:TR is coherent}
    If $X$ is a smooth proper $k$-scheme, then $\TR_\bullet(X)$ is a coherent $R$-module.
\end{proposition}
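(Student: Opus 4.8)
The plan is to run the whole argument inside the abelian category of coherent $R$-modules, treating the descent spectral sequence~\eqref{eq:Hesselholt SS} as a spectral sequence of $R$-modules. The point is that Lemma~\ref{lem:compatible} has already done the delicate work of upgrading \eqref{eq:Hesselholt SS} to a spectral sequence compatible with $F$, $V$, and $d$; what remains is to feed this into Illusie--Raynaud's finiteness theory for the Raynaud algebra. Recall that the row $\E_2^{\bullet,t}$ of the descent spectral sequence is exactly the $R$-module $\H^t(W\Omega^\bullet_X)$ of~\eqref{eq:Here is an R module}, which is coherent for every $t$ by \cite[Th\'eor\`eme~II.2.2]{illusie-raynaud}.

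First I would propagate coherence up the pages. By Lemma~\ref{lem:compatible}(ii)--(iii), for every $r\geq 2$ and every $t$ the row $\E_r^{\bullet,t}$ is an $R$-module and the differential $d_r$ is a morphism of $R$-modules $\E_r^{\bullet,t}\to\E_r^{\bullet,t+r}[r-1]$; hence $\E_{r+1}^{\bullet,t}$ is computed as the homology of a complex of $R$-modules. Since the coherent $R$-modules form a thick abelian subcategory of all $R$-modules, closed under kernels, cokernels, and extensions (\cite[Th\'eor\`eme~I.3.8]{illusie-raynaud}), an induction on $r$ shows that each $\E_r^{\bullet,t}$ is coherent. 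The $\E_2$-page is supported in the finite box $0\leq s\leq\dim X$, $0\leq t\leq\dim X$ (as $W\Omega^s_X=0$ for $s>\dim X$ and $X$ has cohomological dimension $\dim X$), so the spectral sequence degenerates at a finite page; thus $\E_\infty^{\bullet,t}$ is coherent for every $t$ and vanishes for all but finitely many $t$.

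Finally I would reassemble $\TR_\bullet(X)$ from its graded pieces. By Lemma~\ref{lem:compatible}(iv) the $R$-module $\TR_\bullet(X)$ carries a finite filtration by $R$-submodules $F_\bullet^t$ with successive quotients the shifts $\E_\infty^{\bullet,t}[t]$. Coherence is preserved by the degree shift $M\mapsto M[t]$ (boundedness, profiniteness, and finite generation of the cohomology groups are each shift-invariant), so every quotient is coherent; closure under extensions then gives, by a finite induction up the filtration, that $\TR_\bullet(X)$ is coherent.

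The main obstacle is not really located in this proposition: the genuinely subtle input is that the descent spectral sequence respects the full $R$-module structure, and this is precisely Lemma~\ref{lem:compatible}. What remains to check is that the three structural facts about coherent $R$-modules that I invoke from \cite{illusie-raynaud}---coherence of the $\E_2$-rows, the thick abelian structure with closure under extensions, and shift-invariance---apply verbatim to the $R$-modules arising here. The only place I would expect to spend real care is confirming that the completeness and profiniteness conditions, which are defined via the $(V+dV)$-topology, survive the finite extensions and shifts above; given the finiteness of the filtration this is routine, but it is the step most worth verifying explicitly.
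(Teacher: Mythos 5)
Your proposal is correct and follows essentially the same route as the paper's own proof: coherence of the rows $\E_2^{\bullet,t}=\H^t(W\Omega^\bullet_X)$ from Illusie--Raynaud, propagation through the pages using Lemma~\ref{lem:compatible} and closure of coherent $R$-modules under kernels, cokernels, and extensions, finite degeneration for degree reasons, and then induction up the finite filtration of Lemma~\ref{lem:compatible}(iv). The only differences are cosmetic (you cite \cite[Th\'eor\`eme~I.3.8]{illusie-raynaud} where the paper cites Ekedahl, and you spell out shift-invariance of coherence, which the paper leaves implicit).
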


\begin{proof}
  As recorded in \cite[page 191]{MR765411}, the category of coherent $R$-modules is closed under kernels, cokernels, and extensions in the category of all $R$-modules. As remarked above, the Hodge--Witt complex $\H^j(W\Omega^\bullet_X)$ is coherent for each $j$. It follows from
  the compatibility of Lemma~\ref{lem:compatible} that the $R$-modules
  $\E_r^{\bullet,t}$ appearing as the rows of the pages in the descent spectral
  sequence for $X$ are coherent for each $r\geq 2$. The descent spectral
  sequence degenerates at some finite stage for degree reasons, so the
  $\E_{\infty}^{\bullet,t}$ are also coherent. But $\TR_{\bullet}(X)$ admits a
  finite filtration by $R$-submodules
  whose successive quotients are isomorphic to the $\E_{\infty}^{\bullet,t}$
  by Lemma~\ref{lem:compatible}.
  The category of coherent $R$-modules is closed under extensions, so it
  follows inductively that each piece of the filtration is coherent. In
  particular, $\TR_{\bullet}(X)$ is coherent, as desired.
\end{proof}

Using results of Illusie-Raynaud and Ekedahl, we obtain the following consequences for the structure of the $R$-module $\TR_{\bullet}(X)$.

\begin{proposition}\label{prop:finiteness for some TR}
    If $X$ is a smooth proper $k$-scheme of dimension $d$ over a perfect field
    of positive characteristic, then for each $i\geq d-2$,
    \begin{enumerate}
        \item[{\rm (1)}] $\TR_i(X)$ is finitely generated over $W$, and
        \item[{\rm (2)}] the differential $d:\TR_{i-1}(X)\to\TR_{i}(X)$ vanishes.
    \end{enumerate}
\end{proposition}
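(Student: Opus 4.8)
The plan is to read off both statements from the structure of $\TR_\bullet(X)$ as a coherent $R$-module (Proposition~\ref{prop:TR is coherent}), using the theory of Illusie--Raynaud and Ekedahl. For a coherent $R$-module the cohomology groups are finitely generated over $W$, whereas the individual terms need not be; the failure of finite generation, and likewise the nonvanishing of the differential, are both measured by the \emph{dominoes}, the acyclic coherent $R$-modules supported in two consecutive degrees. Ekedahl's structure theory classifies coherent $R$-modules, up to finitely generated modules, as sums of finitely generated Dieudonn\'e modules placed in single degrees together with dominoes, and shows that the differential is supported on the dominoes. Granting this, a term $\TR_m(X)$ is finitely generated precisely when no domino meets degree $m$, and the differential $d\colon\TR_{m-1}(X)\to\TR_m(X)$ is forced to vanish unless a domino occupies the adjacent pair $\{m-1,m\}$. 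Thus both (1) and (2) follow once I show that every domino of $\TR_\bullet(X)$ is concentrated in degrees $\leq d-3$.

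To locate the dominoes I would pass to the descent spectral sequence. By Lemma~\ref{lem:compatible} the $R$-module $\TR_\bullet(X)$ is a finite iterated extension of the rows $\E_\infty^{\bullet,t}$, and each such row is a subquotient, in the category of coherent $R$-modules, of the Hodge--Witt complex $\E_2^{\bullet,t}=\H^t(W\Omega^\bullet_X)$; since finite generation is inherited by subquotients and extensions ($W$ being Noetherian), it suffices to prove that $\H^t(W\Omega^s_X)$ is finitely generated over $W$ whenever $s-t\geq d-2$. Here the dimension enters through $W\Omega^s_X=0$ for $s>d$ and $\H^t(-)=0$ for $t>d$: the bidegrees $(s,t)$ with $s-t\geq d-2$ are confined to the corner $t\leq 2$ and $d-2\leq s\leq d$, finitely many anti-diagonals near the top form degree. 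The required finiteness in this corner is the content of the Illusie--Raynaud and Ekedahl finiteness results for the de Rham--Witt complex. Translating back through the filtration of Lemma~\ref{lem:compatible} gives part (1); part (2) then follows because, there being no domino meeting a degree $m\geq d-2$, the differential (supported on dominoes) must vanish in this range, with rational degeneration of the descent and slope spectral sequences (Proposition~\ref{prop:rational degeneration}) confirming that $\im(d)\subseteq\TR_m(X)$ is at worst torsion.

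The hard part is exactly this confinement, equivalently the finiteness of $\H^t(W\Omega^s_X)$ for $s-t\geq d-2$. The subtlety I expect to be most delicate is that infinite generation of a Hodge--Witt group need not propagate to $\TR_\bullet(X)$, since the descent differentials may annihilate it, while conversely one must rule out any \emph{new} infinitely generated contribution appearing in the top degrees; so the cleanest route is to combine the classification of coherent $R$-modules up to finitely generated modules with Ekedahl's duality for the de Rham--Witt complex, which pins down the admissible region for the domino numbers $T^{s,t}$ in terms of $d$ and forces their vanishing once $s-t$ is large. That the bound $m\geq d-2$ cannot be improved is shown by $X=S\times C$ with $S$ a supersingular K3 surface and $C$ a smooth proper curve: the domino of $S$ in $\TR$-degrees $\{-2,-1\}$ tensors with the finitely generated Hodge--Witt cohomology of $C$ in $\TR$-degree $1$ to produce a domino of $X$ in $\TR$-degrees $\{d-4,d-3\}$, so $\TR_{d-3}(X)$ is infinitely generated.
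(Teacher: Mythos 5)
Your proposal is correct and is essentially the paper's own proof: you reduce, via the descent spectral sequence and the coherence of $\TR_\bullet(X)$ (Lemma~\ref{lem:compatible}, Proposition~\ref{prop:TR is coherent}), to finite generation of $\H^t(W\Omega^s_X)$ in the corner $s-t\geq d-2$, which holds because the domino numbers $T^{i,j}$ vanish for $j\leq 1$ or $i\geq d-1$ --- the paper cites \cite[Section~3.1]{MR726420} for exactly this, and your proposed derivation from Ekedahl's duality $T^{i,j}=T^{d-i-2,d-j+2}$ (plus the trivial vanishing outside $0\leq i\leq d-1$, $0\leq j\leq d$) is precisely how that vanishing is established. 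Part~(2) in your argument is then the same mechanism as the paper's appeal to Lemma~\ref{lem:another little lemma}: for a coherent $R$-module, finite generation of a term forces the adjacent differentials (equivalently, the adjacent dominoes) to vanish.

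Two peripheral caveats. First, your invocation of Proposition~\ref{prop:rational degeneration} in part~(2) is superfluous and slightly misleading: rational degeneration only shows $\im(d)$ is torsion, which by itself gives nothing (the de Rham--Witt differential on an infinitely generated Hodge--Witt group typically has torsion, nonzero image); the coherence lemma you already invoke is what does all the work. Second, your sharpness example is not part of the statement, but as written it fails for $C=\PP^1$: there $\H^0(W\Omega^1_C)=0$, and indeed $\H^3(W\Oscr_{S\times\PP^1})=0$ forces $T^{0,3}=T^{1,2}=0$, so $\TR_0(S\times\PP^1)$ \emph{is} finitely generated. You need $C$ of positive genus, and making ``tensoring a domino with a finitely generated piece'' rigorous requires Ekedahl's K\"unneth formula for the de Rham--Witt complex.
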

\begin{proof}
   As explained in \cite[Section 3.1]{MR726420}, the domino numbers $T^{i,j}(X)$ of $X$ are zero if $j\leq 1$ or $i\geq d-1$, and therefore $\H^j(W\Omega^i_X)$ is finitely generated over $W$ if $j\leq 1$ or $i=d$ (we review the definition of the domino numbers in Section \ref{sec:domino numbers}). The descent spectral sequence then gives (1).
   Claim (2) then follows from Proposition~\ref{prop:TR is coherent} and Lemma~\ref{lem:another little lemma}.
\end{proof}

\begin{lemma}\label{lem:another little lemma}
  Let $M^\bullet$ be a coherent $R$-module. For each $i$, the following are equivalent.
  \begin{enumerate}
      \item[{\rm (i)}] $M^i$ is finitely generated over $W$.
      \item[{\rm (ii)}] The differentials $M^{i-1}\to M^i$ and $M^i\to M^{i+1}$ vanish.
  \end{enumerate}
\end{lemma}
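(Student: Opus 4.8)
The plan is to reduce the statement to a single structural fact about images of the differential and then invoke the theory of dominoes. Write $Z^i=\ker(d\colon M^i\to M^{i+1})$ and $B^i=\im(d\colon M^{i-1}\to M^i)$, so that there are short exact sequences of $W$-modules
\[
0\to B^i\to Z^i\to\H^i(M)\to 0\quad\text{and}\quad 0\to Z^i\to M^i\to B^{i+1}\to 0.
\]
Since $M$ is coherent, $\H^i(M)$ is finitely generated over $W$, and $W$ is Noetherian. Chasing these sequences, and using that $\H^i(M)$ is always finitely generated, $M^i$ is finitely generated over $W$ if and only if both $B^i$ and $B^{i+1}$ are. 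As $B^i=0$ exactly when the differential $M^{i-1}\to M^i$ vanishes, and $B^{i+1}=0$ exactly when $M^i\to M^{i+1}$ vanishes, the lemma is equivalent to the following claim, applied in the two relevant degrees: \emph{in a coherent $R$-module, the image of any differential is either zero or not finitely generated over $W$.} The implication from (ii) to (i) is the trivial half of this reduction, since if both differentials vanish then $M^i=\H^i(M)$, which is finitely generated.

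For the remaining claim I would first record what the Raynaud relations give for free. Fixing $d\colon M^i\to M^{i+1}$ with image $N:=B^{i+1}$, the relation $FdV=d$ of Figure~\ref{fig:Raynaud relations} gives $d(x)=F(d(Vx))$, so every element of $N$ lies in $F(N)$; iterating yields $N\subseteq F(N)\subseteq F^2(N)\subseteq\cdots$, that is, $N$ is $F$-divisible. The relation $Vd=pdV$ similarly shows $V(N)\subseteq N$. These constraints alone do \emph{not} force $N=0$ when $N$ is finitely generated: the rank-one Dieudonn\'e module $W$ with $F=\sigma$ and $V=p\sigma^{-1}$ satisfies all of them. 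So the claim is genuinely global; it must use that $N$ is simultaneously a quotient of $M^i$ by $d$, is killed by $d$ (as $d^2=0$), and sits inside a complex with finitely generated cohomology. This is exactly the phenomenon that the notion of coherence, via its completeness and profiniteness conditions, is designed to control.

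The last step, which is the main obstacle, is to invoke the Illusie--Raynaud and Ekedahl structure theory correctly. By \cite{illusie-raynaud,ekedahl}, the image of a differential in a coherent $R$-module is governed (up to finite length) by \emph{dominoes}, the elementary such module having a term like $k[[V]]$ in one degree; a nonzero domino has terms that are infinite-dimensional over $k$, hence not finitely generated over $W$. Thus $N=\im(d)$ is either zero or infinitely generated, which is precisely the claim. All of the real work sits in this structural input; the reduction above is formal. The one point I would check carefully is that the relevant closure and classification statements are available for the possibly non-$p$-torsion coherent $R$-modules occurring here (using that coherent $R$-modules are closed under kernels, cokernels, and extensions, as recorded in \cite{MR765411}), rather than only for the $p$-torsion dominoes in which the theory is usually phrased.
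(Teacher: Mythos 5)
Your proposal is correct in substance, but it follows a different route from the paper, whose proof is essentially a citation: the implication (ii)$\Rightarrow$(i) is read off from the definition of coherence, and for (i)$\Rightarrow$(ii) the paper points directly to \cite[Corollaire~II.3.8, II.3.9, and II.3.1(f)]{illusie-raynaud}, where precisely this statement is established for the de Rham--Witt complex and for general coherent $R$-modules. Your reduction is correct: since $W$ is Noetherian and $\H^i(M)$ is finitely generated by coherence, the two short exact sequences show that $M^i$ is finitely generated if and only if $B^i$ and $B^{i+1}$ are, so the lemma is equivalent to the claim that the image of a differential in a coherent $R$-module is either zero or not finitely generated. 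Your observation that the Raynaud relations of Figure~\ref{fig:Raynaud relations} alone cannot prove this (witness $W$ with $F=\sigma$, $V=p\sigma^{-1}$) is also apt, and correctly locates all of the content in the structure theory. What your route buys is an explicit mechanism behind the citation, using the domino formalism the paper reviews in Section~\ref{sec:domino numbers}; what the paper's route buys is brevity, and it also answers the worry you raise at the end: the factorization \eqref{eq:dominofactorization} and the extraction of the domino $\Dom^i(M)$ are valid for an arbitrary coherent $R$-module, not only for $p$-torsion ones, so the classification of dominoes is available in exactly the generality you need.

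One step does need tightening, because as stated it is not quite right: the image $N=\im(d)$ is not ``the domino up to finite length.'' The correct relationship is $N\subseteq F^{\infty}B^{i+1}$ with $F^{\infty}B^{i+1}=\bigcup_{n\geq 0}F^{n}N$, an increasing union since $FdV=d$ gives $N\subseteq FN$; the quotient need not be of finite length, so infinite-dimensionality of the domino term does not transfer to $N$ by your stated reasoning. The bridge is short but must be made: first, $d\neq 0$ if and only if $\Dom^i(M)\neq 0$; second, dominoes are killed by a power of $p$ (e.g.\ by the classification as iterated extensions of the $U_\sigma$), so if $N$ were finitely generated it would have finite length $\ell$; third, each $F^{n}N$ is a $W$-submodule of length at most $\ell$ (here perfectness of $k$ enters, so that $\sigma$ is bijective on $W$), whence the increasing union $\bigcup_n F^{n}N=F^{\infty}B^{i+1}$ stabilizes and has finite length---contradicting the fact that a nonzero domino has infinite-dimensional terms in both degrees. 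With this argument inserted in place of the phrase ``governed up to finite length,'' your sketch becomes a complete proof of the statement the paper obtains by citation.
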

\begin{proof}
    That (ii) implies (i) follows from the definition of coherence. 
    For the converse, see~\cite[Corollaire II.3.8,
    II.3.9]{illusie-raynaud} for the case of the de Rham--Witt complex
    and~\cite[II.3.1(f)]{illusie-raynaud} for a general coherent $R$-module.
\end{proof}

\section{Derived invariants of varieties}\label{sec:derived invariants from THH}

Let $X$ be a smooth proper variety over $k$ of dimension $d$. By construction, the Dieudonn\'{e} modules $\TR_*(X)$ together with their differentials are derived invariants of $X$. Our goal in the following sections is to relate their structure to that of the crystalline and Hodge--Witt cohomology groups of $X$. Our main tool is the descent spectral sequence~\eqref{eq:Hesselholt SS}, which relates the $\TR_*(X)$ to the Hodge--Witt cohomology groups $\H^*(W\Omega^*_X)$ of $X$.

We begin with the observation that the descent spectral sequence induces natural isomorphisms
\[
    \TR_{-d}(X)\iso\H^d(W\mathscr{O}_X)\quad\text{and}\quad\TR_d(X)\iso\H^0(W\Omega^d_X).
\]
It follows that the Dieudonn\'{e}-modules $\H^d(W\mathscr{O}_X)$ and
$\H^0(W\Omega^d_X)$ are derived invariants. We record a few easy consequences
of this. Recall that if $X$ is Calabi--Yau, then
$\H^d(W\mathscr{O}_X)$ is the Dieudonn\'{e} module associated to the
Artin--Mazur formal group $\Phi^d(X,\mathbb{G}_m)$ of $X$
(see~\cite{artin-mazur}).

\begin{theorem}\label{thm:formal group}
    If $X$ is Calabi--Yau, then the formal group $\Phi^d(X,\mathbb{G}_m)$ is a derived
    invariant, as is the height of $X$.
\end{theorem}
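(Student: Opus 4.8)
The plan is to extract the formal group $\Phi^d(X,\mathbb{G}_m)$ directly from the Dieudonné module $\TR_{-d}(X)$, which is a derived invariant by construction. The edge of the descent spectral sequence~\eqref{eq:Hesselholt SS} identifies $\TR_{-d}(X)\iso\H^d(W\mathscr{O}_X)$, and the essential point is that this is an isomorphism of Dieudonné modules, i.e.\ that it intertwines $F$ and $V$ on the two sides. Granting this, the derived invariance of $\TR_*(X)$ with its $F,V$-action (a noncommutative invariant of $\Dscr^b(X)$) upgrades to derived invariance of the Dieudonné module $\H^d(W\mathscr{O}_X)$. When $X$ is Calabi--Yau, the recalled identification of $\H^d(W\mathscr{O}_X)$ with the Dieudonné module of the Artin--Mazur formal group, combined with the fact that the Dieudonné functor on one-dimensional formal groups is fully faithful, will then let us promote an isomorphism of modules to an isomorphism of formal groups.

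In detail, given a Fourier--Mukai equivalence $\Dscr^b(X)\we\Dscr^b(Y)$ with $X$ Calabi--Yau, I would first check that $Y$ is again Calabi--Yau: the Serre functor is intrinsic to the dg category, so $\omega_X\iso\mathscr{O}_X$ gives $S_X\we[d]$, transporting along the equivalence yields $S_Y\we[d]$, and since $S_Y=(-)\otimes\omega_Y[\dim Y]$ this forces $\dim Y=d$ and $\omega_Y\iso\mathscr{O}_Y$. Hence $\Phi^d(Y,\mathbb{G}_m)$ is again pro-represented by a one-dimensional formal group whose Dieudonné module is $\H^d(W\mathscr{O}_Y)$. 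Now I chain isomorphisms of Dieudonné modules: the equivalence gives $\TR_{-d}(X)\iso\TR_{-d}(Y)$, and the descent spectral sequence gives $\TR_{-d}(X)\iso\H^d(W\mathscr{O}_X)$ and $\TR_{-d}(Y)\iso\H^d(W\mathscr{O}_Y)$. Composing yields $\H^d(W\mathscr{O}_X)\iso\H^d(W\mathscr{O}_Y)$, and classical Dieudonné theory then produces $\Phi^d(X,\mathbb{G}_m)\iso\Phi^d(Y,\mathbb{G}_m)$. Finally, the height is an isomorphism invariant of a formal group, so this last isomorphism immediately gives equality of heights.

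The main obstacle I anticipate is the very first point: verifying that the edge isomorphism $\TR_{-d}(X)\iso\H^d(W\mathscr{O}_X)$ is one of Dieudonné modules rather than merely of $W$-modules. This is exactly the structural compatibility set up in Section~\ref{sec:compatible}: the descent spectral sequence takes place in $\widehat{\Dieu}_k$, so its edge maps are maps of Dieudonné modules, and in the extreme bidegrees $(s,t)=(0,d)$ and $(d,0)$ there is no room for incoming or outgoing differentials, so the edge map is an isomorphism. A secondary point is to confirm that $\H^d(W\mathscr{O}_X)$ lies in the subcategory on which the Dieudonné functor is fully faithful, so that the isomorphism of modules genuinely lifts to one of formal groups; this is where the Calabi--Yau one-dimensionality hypothesis and Artin--Mazur pro-representability enter.
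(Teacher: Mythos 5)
Your proposal is correct and follows essentially the same route as the paper: the paper likewise deduces the theorem from the edge isomorphism $\TR_{-d}(X)\iso\H^d(W\mathscr{O}_X)$ of Dieudonn\'e modules (no differentials can hit the corner $(s,t)=(0,d)$), the derived invariance of $\TR_{-d}$ with its $F,V$-structure from Section~\ref{sec:compatible}, and the Artin--Mazur identification of $\H^d(W\mathscr{O}_X)$ with the Dieudonn\'e module of $\Phi^d(X,\mathbb{G}_m)$. Your explicit Serre-functor check that $Y$ is again Calabi--Yau is a detail the paper leaves implicit, but it is a welcome completeness point rather than a different approach.
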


In the case of K3 surfaces over an algebraically closed field, this is well
known, being an easy consequence of the derived invariance of the rational
Mukai crystal $\widetilde{\H}(X/K)$ introduced in section 2.2 of
\cite{MR3429474}.

\begin{remark}
    In fact, the formal group $\Phi^d(X,\Gm)$ is also a twisted derived
    invariant. Indeed, if $\alpha\in\Br(X)$, then
    $\TR_{-d}(X,\alpha)\iso\H^d(W\Oscr_X)$ as well.
\end{remark}

\begin{example}
    Theorem~\ref{thm:formal group} is especially interesting in light of
    Yobuko's result~\cite{yobuko} which says that any finite-height Calabi--Yau
    variety lifts to $W_2(k)$. We see that if $X$ and $Y$ are derived
    equivalent Calabi--Yau varieties and if $X$ has finite height, then so does
    $Y$ and they both lift to $W_2(k)$. This gives some cases in which Question~\ref{question:lieblich} has a positive answer.
\end{example}

The relationship between the remaining Hodge--Witt cohomology groups and the
$\TR_*(X)$ is subtle in general. There are several difficulties here: first,
there may be non-trivial differentials in the descent spectral sequence, which
need to be understood to relate the Hodge--Witt cohomology groups to the
$\E_{\infty}$-page. Second, the $\E_{\infty}$-page only
determines the graded pieces of the induced filtration on $\TR_{\bullet}(X)$.
Unlike in the case of Hochschild homology, this is not enough to determine
$\TR_{\bullet}(X)$ up to isomorphism, as there are non-trivial extensions
in the category of $R$-modules. Finally, as in the case of Hochschild homology,
to compute the $\E_{\infty}$-page from $\TR_{\bullet}(X)$ one needs the
additional data of the filtration on $\TR_{\bullet}(X)$, which is not a derived
invariant.

In the remainder of this section we will attempt to overcome these obstacles in various situations. We begin in Section \ref{sec:first observations} by analyzing the case of surfaces in detail. Here, the situation is sufficiently restricted to allows us to (almost) prove that the entire first page of the slope spectral
sequence is a derived invariant.

We then discuss the groups $\TR_*(X)$ up to isogeny, that is, after inverting $p$. This is the classical theory of \textit{slopes}. A major
simplification occurs here, in that all of the relevant spectral sequences
degenerate after inverting $p$ by Proposition~\ref{prop:rational degeneration}. The resulting analysis follows the well
established patterns involved in extracting derived invariants from Hochschild
homology (see also Remark \ref{rem:isogeny is easy}).

We then study the torsion in the Hodge--Witt cohomology, which contains information lost upon inverting $p$. We focus on the dominoes associated to differentials on the first page of the slope spectral sequence. These encode in an elegant way the infinitely generated
$p$-torsion in the Hodge--Witt cohomology groups, which is present in many
interesting (and geometrically well behaved) examples, such as certain K3 surfaces and abelian varieties. Here we introduce new non-classical derived invariants, defined using the differentials in the Tate spectral sequence~\eqref{eq:Tate for TP}. We then discuss Hodge--Witt numbers, which combine the information from the slopes of isocrystals and the domino numbers. 

We finally apply these results to study the derived invariance of Hodge numbers in
positive characteristic. We note that in order to obtain information on the
Hodge numbers from Hodge--Witt cohomology, it is necessary to remember the infinite $p$-torsion. The information from the slopes of the isocrystals, while perhaps more elementary, does not suffice.



We are able to obtain the strongest results for surfaces and threefolds. For future use we will record a visualization of the information contained in the
slope and descent spectral sequences in these cases. We record the following lemma.

\begin{lemma}\label{lem:E2degeneration}
    Let $X$ be a smooth proper $k$-scheme of dimension $d$. 
    \begin{enumerate}
        \item[(1)] If $d\leq 2$, then the descent spectral sequence for $X$ degenerates at $\E_2$.
        \item[(2)] If $d=3$, then the only possibly nonzero differentials on the $\E_2$ page of the descent spectral sequence for $X$ are as pictured in Figure \ref{fig:threefoldslope}.
    \end{enumerate}
\end{lemma}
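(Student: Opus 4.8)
The plan is to analyze the descent spectral sequence
\[
  \E_2^{s,t}=\H^t(X,W\Omega^s_X)\Rightarrow\TR_{s-t}(X)
\]
using the bounds on the indices $s$ and $t$ coming from the dimension of $X$, together with the compatibility results of Lemma~\ref{lem:compatible} and Proposition~\ref{prop:finiteness for some TR}. First I would record the range of nonvanishing terms: since $X$ has dimension $d$, the sheaves $W\Omega^s_X$ are nonzero only for $0\leq s\leq d$, and the Zariski cohomology $\H^t(X,W\Omega^s_X)$ vanishes unless $0\leq t\leq d$. Thus the $\E_2$-page is supported in the square $0\leq s,t\leq d$. Recall that with the indexing of~\eqref{eq:Hesselholt SS} the differentials $d_r$ have bidegree $(r-1,r)$, so $d_r\colon\E_r^{s,t}\to\E_r^{s+r-1,t+r}$ increases both $s$ and $t$.

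For part (1), when $d\leq 2$, the plan is simply to observe that any differential $d_r$ with $r\geq 2$ must move out of the supporting square. Indeed $d_2$ has bidegree $(1,2)$, and starting from any term with $0\leq t\leq 2$ the target has second coordinate $t+2\geq 2$; the only way to land inside the square is to have $t=0$ and target $t+2=2$, but then the source $\H^0(X,W\Omega^s_X)$ maps to $\H^2(X,W\Omega^{s+1}_X)$. Here I would invoke Proposition~\ref{prop:finiteness for some TR}(2): for $d=2$ and $i=s-t$ in the relevant range, the differentials vanish; more directly, for $d\leq 2$ one checks that either the source or target of every potential $d_r$ lies outside $\{0,\dots,d\}^2$ once the bidegree $(r-1,r)$ with $r\geq 2$ is taken into account, forcing degeneration at $\E_2$. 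I would double-check the single borderline differential $\H^0(W\Omega^0_X)\to\H^2(W\Omega^1_X)$ for $d=2$ and eliminate it using the isomorphism $\TR_{-d}(X)\iso\H^d(W\Oscr_X)$ together with coherence.

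For part (2), when $d=3$, the same bidegree count shows that the only differentials on the $\E_2$-page that both start and end inside the square $\{0,1,2,3\}^2$ are the $d_2$'s of bidegree $(1,2)$ running from row $t$ to row $t+2$ with $t\in\{0,1\}$, i.e.\ the maps
\[
  \H^0(X,W\Omega^s_X)\to\H^2(X,W\Omega^{s+1}_X)\quad\text{and}\quad
  \H^1(X,W\Omega^s_X)\to\H^3(X,W\Omega^{s+1}_X),
\]
and higher $d_r$ necessarily leave the square; these surviving arrows are exactly those drawn in Figure~\ref{fig:threefoldslope}. To complete the proof I would further prune this list using Proposition~\ref{prop:finiteness for some TR}: for $d=3$ and $i\geq d-2=1$, both $\TR_i(X)$ is finitely generated and the differential $d\colon\TR_{i-1}(X)\to\TR_i(X)$ vanishes, which via Lemma~\ref{lem:compatible}(iii)--(iv) constrains the corresponding $\E_2$-differentials in the upper-right portion of the square. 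The main obstacle I anticipate is the careful bookkeeping needed to match the surviving $d_2$'s against exactly the arrows in Figure~\ref{fig:threefoldslope} and to confirm that no borderline differential is overlooked; the index arithmetic for bidegree $(r-1,r)$ against the box $\{0,\dots,d\}^2$ is routine but must be done without sign or off-by-one errors, and the appeal to Proposition~\ref{prop:finiteness for some TR} must be correctly translated through the reindexing relating the filtration $F_i^t$ on $\TR_\bullet(X)$ to the spectral sequence.
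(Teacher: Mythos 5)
Your bidegree bookkeeping is the easy half of the argument, and you have it right: since the $\E_2$-page is supported in $\{0,\dots,d\}\times\{0,\dots,d\}$ and $d_r$ has bidegree $(r-1,r)$, for $d=2$ the only candidates are $d_2\colon\H^0(W\Oscr_X)\to\H^2(W\Omega^1_X)$ and $d_2\colon\H^0(W\Omega^1_X)\to\H^2(W\Omega^2_X)$, and for $d=3$ there are six candidate $d_2$'s (sources $(s,t)$ with $s\in\{0,1,2\}$, $t\in\{0,1\}$). But this is where the real content of the lemma begins, and your proposal does not supply it. Figure \ref{fig:threefoldslope} contains only \emph{four} diagonal arrows, not six: one must eliminate $\H^0(W\Oscr_X)\to\H^2(W\Omega^1_X)$ and $\H^1(W\Omega^2_X)\to\H^3(W\Omega^3_X)$ (and, for $d=2$, both candidates). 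The paper's two mechanisms are: (i) Proposition \ref{prop:rational degeneration} makes every differential rationally zero, hence torsion-valued, so any differential landing in the torsion-free module $\H^d(W\Omega^d_X)$ vanishes; (ii) the differential out of $\E_2^{0,0}$ vanishes by functoriality along $X\to\Spec k$, using $\H^0(W\Oscr_X)\iso\TR_0(k)$ and the fact that the spectral sequence of the point has no differentials. Neither mechanism appears in your proposal, and your claim that the bidegree-allowed arrows for $d=3$ ``are exactly those drawn in Figure \ref{fig:threefoldslope}'' is simply false.

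The substitutes you offer do not work. Proposition \ref{prop:finiteness for some TR}(2) concerns the de Rham--Witt differential $d\colon\TR_{i-1}(X)\to\TR_i(X)$, i.e.\ the degree-one self-map making $\TR_\bullet(X)$ into an $R$-module; it says nothing about the spectral sequence differentials $d_r$, which are a different structure entirely (by Lemma \ref{lem:compatible}(iii) they are $R$-module maps between rows, so they commute with $d$, but their own vanishing is a separate question). Your ``more direct'' claim that the source or target of every potential $d_r$ exits the square is exactly what you had just, correctly, shown to be false for $d=2$; and for $d=3$ even $d_3$ can stay inside the square (e.g.\ $(0,0)\to(2,3)$), so that assertion fails there as well, though it is at least irrelevant to a statement about the $\E_2$-page. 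Finally, the differential $\H^0(W\Oscr_X)\to\H^2(W\Omega^1_X)$ cannot be killed by ``coherence'' or by the isomorphism $\TR_{-d}(X)\iso\H^d(W\Oscr_X)$ (which concerns the corner $(0,d)$, not the positions $(0,0)$ or $(1,2)$): Section \ref{sec:K3} shows that in the twisted descent spectral sequence this very differential is \emph{nonzero}, sending $1\mapsto\dlog(\alpha)$, even though the $\E_2$-page is term-by-term the same. So no purely structural argument can succeed here; one genuinely needs naturality in $X$, and that is precisely the idea missing from your proposal.
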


\begin{proof}
    Note that, in general, the descent spectral sequence degenerates at $\E_{d+1}$ for degree reasons. This gives the result for $d=1$. If $d=2$ the
    only possible differentials in the $\E_2$-page are
    $\H^0(W\Oscr_X)\rightarrow\H^2(W\Omega^1_X)$ and
    $\H^0(W\Omega^1_X)\rightarrow\H^2(W\Omega^2_X)$. By
    Proposition~\ref{prop:rational degeneration}, the differentials vanish after inverting $p$. Since
    $\H^2(W\Omega^2_X)$ is torsion-free, the latter differential is zero. The
    former is zero by functoriality and the fact that
    $\H^0(W\Oscr_X)\iso\TR_0(k)$. By the same reasoning, when $d=3$ we conclude that the differentials $\H^0(W\mathscr{O}_X)\to\H^2(W\Omega^1_X)$ and $\H^1(W\Omega^2_X)\to\H^3(W\Omega^3_X)$ vanish.
\end{proof}

Suppose now that $X$ is a smooth proper surface over $k$. The first page of the slope
spectral sequence for $X$ is given in Figure~\ref{fig:surfaceslope}.

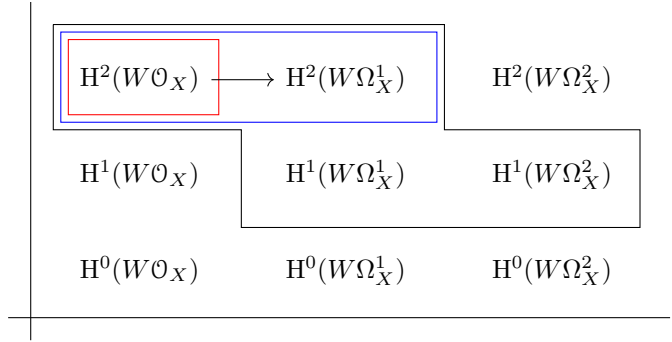
\begin{figure}[H]
    \centering
    \begin{equation*}
    \tikz[
    overlay]{
        \draw[draw=red] (0,1.8) rectangle (2,.8);
        \draw[draw=blue] (-.1,1.9) rectangle (4.9,.7);
        \draw[draw=black] (-.2,2)--(-.2,.6)--(2.3,.6)--(2.3,-.7)--(7.6,-.7)--(7.6,.6)--(5,.6)--(5,2)--(-.2,2);
        \draw[draw=black] (-.5,2.3)--(-.5,-2.2);
        \draw[draw=black] (-.8,-1.9)--(8,-1.9);
    }
        \begin{tikzcd}
          \H^2(W\mathscr{O}_X)\arrow{r}&\H^2(W\Omega^1_X)&\H^2(W\Omega^2_X)    \\
          \H^1(W\mathscr{O}_X) &\H^1(W\Omega^1_X)&\H^1(W\Omega^2_X) \\
         \H^0(W\mathscr{O}_X)  &\H^0(W\Omega^1_X)&\H^0(W\Omega^2_X)
        \end{tikzcd}
    \end{equation*}
    \caption{The $\E_1$-page of the slope spectral sequence and the $\E_2$-page
        of the descent spectral sequence for $\TR$ of a smooth proper surface
        $X$ over $k$.
        The horizontal arrow is the only possibly non-zero differential on
        the first page of the slope spectral sequence. All differentials on
        all pages of the descent spectral sequence vanish (see
        Lemma~\ref{lem:E2degeneration}).
        The red box (the small $1\times 1$ box in the upper left) indicates the source of the only possibly non-zero domino, the blue
        box (the $1\times 2$ box) indicates the possible locations of nilpotent
        torsion, and the black box (the stair-step shaped box)
        indicates the possible locations of semi-simple torsion (see Remark \ref{rem:Ekedahls devissage}).}
    \label{fig:surfaceslope}
\end{figure}

If $X$ is a smooth proper threefold over $k$, then the first page of the slope spectral
sequence is given in Figure~\ref{fig:threefoldslope}.

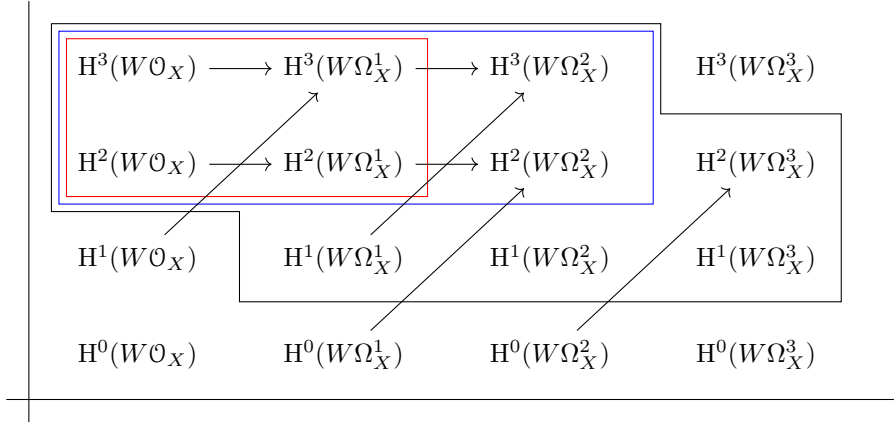
\begin{figure}[H]
    \centering
\begin{equation*}
\tikz[
overlay]{
    \draw[draw=red] (0,2.3) rectangle (4.8,.2);
    \draw[draw=blue] (-.1,2.4) rectangle (7.8,.1);
    \draw[draw=black] (-.2,2.5)--(-.2,0)--(2.3,0)--(2.3,-1.2)--(10.3,-1.2)--(10.3,1.3)--(7.9,1.3)--(7.9,2.5)--(-.2,2.5);
    \draw[draw=black] (-.5,2.8)--(-.5,-2.8);
    \draw[draw=black] (-.8,-2.5)--(11,-2.5);
}
    \begin{tikzcd}
      \H^3(W\mathscr{O}_X)\arrow{r}&\H^3(W\Omega^1_X)\arrow{r} &\H^3(W\Omega^2_X)                      &\H^3(W\Omega^3_X)\\
      \H^2(W\mathscr{O}_X)\arrow{r}&\H^2(W\Omega^1_X)\arrow{r}&\H^2(W\Omega^2_X)                      &\H^2(W\Omega^3_X)\\
      \H^1(W\mathscr{O}_X)\arrow{uur} &\H^1(W\Omega^1_X)\arrow{uur}&\H^1(W\Omega^2_X) &\H^1(W\Omega^3_X)\\
      \H^0(W\mathscr{O}_X)  &\H^0(W\Omega^1_X)\arrow{uur}&\H^0(W\Omega^2_X)\arrow{uur}&\H^0(W\Omega^3_X)
    \end{tikzcd}
\end{equation*}
    \caption{The $\E_1$-page of the slope spectral sequence and the $\E_2$-page
        of the descent spectral sequence for $\TR$ of a smooth proper threefold
        $X$ over $k$.
        The horizontal arrows are all possibly non-zero differentials on
        the first page of the slope spectral sequence. The diagonal arrows are
        all possibly non-zero differentials on the second page of the
        descent spectral sequence (see
        Lemma~\ref{lem:E2degeneration}). The descent spectral sequence degenerates at
        $\E_3$. The red box (the $2\times 2$ box) indicates the sources of the
        possibly non-zero dominoes, the blue box (the $2\times 3$ box) indicates the possible
        locations of nilpotent torsion, and the black box (the stair-step
        shaped box) indicates the
    possible locations of semi-simple torsion (see Remark \ref{rem:Ekedahls devissage}).}
    \label{fig:threefoldslope}
\end{figure}

\subsection{Derived invariants of surfaces}\label{sec:first observations}

Suppose that $X$ is a surface. By Proposition \ref{prop:finiteness for some TR}, the
$R$-module $\TR_{\bullet}(X)$ has only one possibly nonzero differential, and
so looks like
\[
  \TR_{-2}(X)\xrightarrow{d}\TR_{-1}(X)\xrightarrow{0}\TR_0(X)\xrightarrow{0}\TR_1(X)\xrightarrow{0}\TR_2(X).
\]
 Furthermore, $\TR_i(X)$ is finitely generated for $i\geq 0$. By
 Lemma~\ref{lem:E2degeneration}, the descent spectral sequence degenerates at
 $\E_2$, so by Lemma~\ref{lem:compatible} we have a filtration
\[
  0=F^3_{\bullet}\subset F^2_{\bullet}\subset F^1_{\bullet}\subset F^0_{\bullet}=\TR_{\bullet}(X)
\]
by coherent sub-$R$-modules such that $F^i_{\bullet}/F^{i+1}_{\bullet}\cong \H^i(W\Omega^{\bullet}_X)[i]$. This filtration yields short exact sequences
\[
  0\to F^2_{\bullet}\to \TR_{\bullet}(X)\to \frac{\TR_{\bullet}}{F^2_{\bullet}}\to 0\hspace{1cm}\mbox{and}\hspace{1cm}0\to \frac{F^1_{\bullet}}{F^2_{\bullet}}\to \frac{\TR_{\bullet}(X)}{F^2_{\bullet}}\to \frac{\TR_{\bullet}(X)}{F^1_{\bullet}(X)}\to 0
\]
and hence we have commuting diagrams
\begin{equation}\label{eq:big ol diagram}
  \begin{tikzcd}
    &0\arrow{d}&0\arrow{d}&&\\
    \H^2(W\mathscr{O}_X)\isor{d}{}\arrow{r}{d}&\H^2(W\Omega^1_X)\arrow{r}{0}\arrow{d}&\H^2(W\Omega^2_X)\arrow{d}&&\\
    \TR_{-2}(X)\arrow{r}{d}&\TR_{-1}(X)\arrow{r}{0}\arrow{d}&\TR_0(X)\arrow{r}{0}\arrow{d}&\TR_1(X)\arrow{r}{0}\isor{d}{}&\TR_2(X)\isor{d}{}\\
    &\H^1(W\mathscr{O}_X)\arrow{r}{0}\arrow{d}&(F^0_\bullet/F^1_\bullet)_0\arrow{r}{0}\arrow{d}&\TR_1(X)\arrow{r}{0}&\TR_2(X)\\
    &0&0&&
  \end{tikzcd}
\end{equation}
and
\begin{equation}\label{eq:another big ol diagram}
  \begin{tikzcd}
    &0\arrow{d}&0\arrow{d}&\\
    \H^1(W\mathscr{O}_X)\isor{d}{}\arrow{r}{0}&\H^1(W\Omega^1_X)\arrow{r}{0}\arrow{d}&\H^1(W\Omega^2_X)\arrow{d}&\\
    \H^1(W\mathscr{O}_X)\arrow{r}{0}&(F^0_\bullet/F^1_\bullet)_0\arrow{r}{0}\arrow{d}&\TR_1(X)\arrow{r}{0}\arrow{d}&\TR_2(X)\isor{d}{}\\
    &\H^0(W\mathscr{O}_X)\arrow{r}{0}\arrow{d}&\H^0(W\Omega^1_X)\arrow{r}{0}\arrow{d}&\H^0(W\Omega^2_X),\\
    &0&0&&
  \end{tikzcd}
\end{equation}
where the columns are exact sequences of $R$-modules and the rows are $R$-modules.

\begin{theorem}\label{thm:everything is derived invariant for surfaces}
    Let $X$ and $Y$ be smooth proper surfaces over $k$. If $X$ and $Y$ are FM-equivalent, then for
    \[
      (i,j)\in\left\{(0,0),(1,0),(2,0),(1,1),(2,1),(0,2),(2,2)\right\}
    \]
    there exists an isomorphism
    \[
      \H^j(W\Omega^i_X)\cong\H^j(W\Omega^i_Y)
    \]
    of Dieudonn\'e-modules. Furthermore, there exist isomorphisms
    \[
      \H^1(W\mathscr{O}_X)\otimes K\cong\H^1(W\mathscr{O}_Y)\otimes K\hspace{.8cm}\H^2(W\Omega^1_X)\otimes K\cong\H^2(W\Omega^1_Y)\otimes K\hspace{.8cm}\H^2(W\Omega^1_X)[p^{\infty}]\cong\H^2(W\Omega^1_Y)[p^{\infty}]
    \]
    of $R^0$-modules and a commutative diagram
    \[
      \begin{tikzcd}
        \H^2(W\mathscr{O}_X)\arrow{r}{d}\isor{d}{}&\H^2(W\Omega^1_X)[p^{\infty}]\isor{d}{}\\
        \H^2(W\mathscr{O}_Y)\arrow{r}{d}&\H^2(W\Omega^1_Y)[p^{\infty}].
      \end{tikzcd}
    \]
\end{theorem}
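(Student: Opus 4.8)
The entire statement rests on the single input that the coherent $R$-module $\TR_\bullet(X)$ is a derived invariant (Proposition~\ref{prop:TR is coherent}, together with Hesselholt's identification $\TR_*\cong W\Omega^*$): an FM-equivalence induces an isomorphism $\phi\colon\TR_\bullet(X)\xrightarrow{\cong}\TR_\bullet(Y)$ of $R$-modules, hence of graded Dieudonn\'e modules commuting with the unique possibly-nonzero differential $d\colon\TR_{-2}\to\TR_{-1}$ (Proposition~\ref{prop:finiteness for some TR}). The plan is therefore to recover each Hodge--Witt group, as far as possible, as a \emph{functorially defined} subquotient of this $R$-module, so that $\phi$ automatically carries it to the corresponding subquotient for $Y$. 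Two cases are immediate: the edge identifications $\TR_2(X)\cong\H^0(W\Omega^2_X)$ and $\TR_{-2}(X)\cong\H^2(W\mathscr{O}_X)$ of Section~\ref{sec:derived invariants from THH} give $(2,0)$ and $(0,2)$ on the nose, as Dieudonn\'e (indeed $R^0$-) modules.

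For the remaining strong cases I would work degree by degree inside the finitely generated (Proposition~\ref{prop:finiteness for some TR}) Dieudonn\'e modules $\TR_0$ and $\TR_1$, using the diagrams~\eqref{eq:big ol diagram} and~\eqref{eq:another big ol diagram}. The structural input is that the graded pieces of the descent filtration on $\TR_i$ are exactly the Hodge--Witt groups $\H^j(W\Omega^{i+j})$, whose Frobenius slopes lie in the pairwise disjoint ranges $[i+j,\,i+j+1)$ (the slope decomposition of the de Rham--Witt complex). Since the isocrystal $\TR_i\otimes K$ carries a canonical slope filtration, the saturated submodules $\TR_i\cap(\TR_i\otimes K)^{\geq c}$ are intrinsic to the abstract Dieudonn\'e module $\TR_i$, and I would match these with the descent filtration. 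Concretely: in $\TR_1$ the quotient $\H^0(W\Omega^1)$ is torsion-free, so the saturated slope-$\geq 2$ submodule is exactly $\H^1(W\Omega^2)$ and the quotient is $\H^0(W\Omega^1)$, giving $(2,1)$ and $(1,0)$; in $\TR_0$ the extremal pieces $\H^2(W\Omega^2)$ (top Hodge--Witt, torsion-free) and $\H^0(W\mathscr{O})=W$ are torsion-free, which lets me peel off the slope-$\geq 2$ part and the slope-$0$ quotient canonically, giving $(2,2)$ and $(0,0)$ and leaving $\H^1(W\Omega^1)$ in the middle, which yields $(1,1)$ once its torsion is correctly accounted for (see below). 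Each such subquotient is preserved by $\phi$.

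For the two remaining groups I would pass to $\TR_{-1}$, whose graded pieces are $\H^1(W\mathscr{O})$ (slopes $[0,1)$) and $\H^2(W\Omega^1)$ (slopes $[1,2)$). Here only the weaker conclusions survive, for a structural reason: $\H^2(W\Omega^1)$ carries the infinite $V$-torsion of the surface's unique domino, so $\TR_{-1}$ is not finitely generated and the saturation argument is unavailable. What does survive is (i) the slope decomposition of the \emph{finite-dimensional} isocrystal $\TR_{-1}\otimes K$, which yields the rational isomorphisms for $\H^1(W\mathscr{O})\otimes K$ and $\H^2(W\Omega^1)\otimes K$; and (ii) the torsion subgroup $\TR_{-1}[p^\infty]$, which I would identify with $\H^2(W\Omega^1)[p^\infty]$ and which is preserved by $\phi$. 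The commuting square then follows by restricting the identity $\phi\circ d=d\circ\phi$ to torsion: by rational degeneration (Proposition~\ref{prop:rational degeneration}) the domino differential $d\colon\TR_{-2}\to\TR_{-1}$ has image in $\TR_{-1}[p^\infty]=\H^2(W\Omega^1)[p^\infty]$, and the two vertical maps are the identifications of $\TR_{-2}$ and of $\TR_{-1}[p^\infty]$ just produced.

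The hard part will be the integral and torsion bookkeeping, concentrated in step (ii) above and in the identification $(1,1)$: the naive slope filtration recovers the Hodge--Witt groups only up to isogeny, and matching it with the descent filtration \emph{on the nose} --- so that finite $p$-torsion is attached to the correct graded piece --- requires the fine structure theory of coherent $R$-modules of Illusie--Raynaud and Ekedahl's d\'evissage (Remark~\ref{rem:Ekedahls devissage}). This is precisely the point at which the argument degrades from a Dieudonn\'e-module statement to a rational-plus-torsion statement for the two groups meeting the domino, and I would need to verify that the finite torsion possibly present in $\H^1(W\mathscr{O})$ and $\H^1(W\Omega^1)$ neither obstructs the seven integral identifications nor contaminates the identification of $\TR_{-1}[p^\infty]$ with the (infinite) domino torsion of $\H^2(W\Omega^1)$.
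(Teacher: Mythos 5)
Your plan fails at its central mechanism. The operator that the FM-equivalence isomorphism $\phi\colon\TR_\bullet(X)\to\TR_\bullet(Y)$ preserves is the Dieudonn\'e-module (de Rham--Witt) Frobenius $F$, and by~\eqref{eq:slopes of crystalline coho} the slopes of \emph{that} operator on every Hodge--Witt group lie in $[0,1)$: it is the twisted operator $p^iF$ on $\H^j(W\Omega^i_X)$ that has slopes in $[i,i+1)$, and the twist depends on the de Rham--Witt degree $i$ --- that is, on the very filtration you are trying to reconstruct --- so there is no single operator on $\TR_n(X)$ inducing your normalization, and the argument is circular. Concretely, in $\TR_1(X)$ both the submodule $\H^1(W\Omega^2_X)$ and the quotient $\H^0(W\Omega^1_X)$ are unit-root for $F$ (they correspond to the slope-$2$ part of $\H^3(X/K)$ and the slope-$1$ part of $\H^1(X/K)$, so $F$ has slope exactly $0$ on both); hence the ``saturated slope-$\geq 2$ submodule'' of $\TR_1(X)$ that you want to identify with $\H^1(W\Omega^2_X)$ is zero, and the slope filtration of $\TR_1(X)\otimes K$ is trivial. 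The same collapse occurs in $\TR_0(X)$ (all three graded pieces have $F$-slopes in $[0,1)$) and in $\TR_{-1}(X)$ (both $\H^1(W\Oscr_X)$ and $\H^2(W\Omega^1_X)$ have $F$-slopes in $[0,1)$), so your steps producing $(1,0)$, $(2,1)$, $(0,0)$, $(1,1)$, $(2,2)$ \emph{and} the two rational isomorphisms all break; this is not a bookkeeping issue deferrable to the ``hard part,'' but a failure of the separating device itself.

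This is precisely the point where the paper imports an input external to $\TR_\bullet$, which your proposal avoids on principle: by Corollary~\ref{cor:Popa Schnell consequence} (Popa--Schnell/Honigs, Theorem~\ref{thm:Popa-Schnell}, via the Albanese) the isogeny class of $\H^1(X/W)$ is a derived invariant, which pins down $\H^0(W\Omega^1_X)$ up to isogeny, hence up to isomorphism since it is torsion-free of $F$-slope zero; the paper then splits the extension $0\to\H^1(W\Omega^2_X)\to\TR_1(X)\to\H^0(W\Omega^1_X)\to 0$ of finitely generated $R^0$-modules and cancels, and treats $\TR_0(X)$ and the rational part of $\TR_{-1}(X)$ similarly (with $\H^1(W\Oscr_X)\otimes K$ again coming from the invariance of the isocrystal $\H^1(X/K)$, not from slopes internal to $\TR_{-1}$). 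The parts of your proposal that do work --- the edge identifications $\TR_2(X)\cong\H^0(W\Omega^2_X)$ and $\TR_{-2}(X)\cong\H^2(W\Oscr_X)$, the identification $\TR_{-1}(X)[p^\infty]\cong\H^2(W\Omega^1_X)[p^\infty]$ (valid because $\H^1(W\Oscr_X)$ is torsion-free), and the commutative square obtained by restricting $\phi\circ d=d\circ\phi$ to torsion, using that $d$ has torsion image by rational degeneration --- agree with the paper; but without a substitute for the slope-separation step, and Popa--Schnell is exactly that substitute, the seven Dieudonn\'e-module isomorphisms and the two rational ones are not proved.
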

\begin{proof}
    By diagram~\eqref{eq:another big ol diagram}, we have an isomorphism $\TR_2(X)\xrightarrow{\sim}\H^0(W\Omega^2_X)$, and by diagram~\eqref{eq:big ol diagram}, we have an isomorphism $\H^2(W\mathscr{O}_X)\xrightarrow{\sim}\TR_{-2}(X)$. This produces the desired isomorphisms for $(i,j)=(2,0),(0,2)$. By Corollary \ref{cor:Popa Schnell consequence}, the isogeny class of $\H^0(W\Omega^1_X)$ is a derived invariant. But $\H^0(W\Omega^1_X)$ is torsion free of slope zero, so in fact $\H^0(W\Omega^1_X)\cong\H^0(W\Omega^1_Y)$. We consider the short exact sequence
    \[
      0\to\H^1(W\Omega^2_X)\to\TR_1(X)\to\H^0(W\Omega^1_X)\to 0
    \]
    whose terms are finitely generated $R^0$-modules,
    and similarly for $Y$. As $\H^0(W\Omega^1_X)$ is torsion free of slope zero, this sequence splits, and we conclude that $\H^1(W\Omega^2_X)\cong\H^1(W\Omega^2_Y)$. We conclude the result for $(i,j)=(0,0),(1,1),(2,2)$ from the derived invariance of $\TR_0(X)$. Consider next the short exact sequence
    \[
      0\to\H^2(W\Omega^1_X)\to\TR_{-1}(X)\to\H^1(W\mathscr{O}_X)\to 0
    \]
    coming from diagram~\eqref{eq:big ol diagram}. As $\H^1(W\mathscr{O}_X)$ is torsion free, the map $\H^2(W\Omega^1_X)\to\TR_{-1}(X)$ induces an isomorphism on torsion.
\end{proof}
\begin{remark}
    Theorem \ref{thm:everything is derived invariant for surfaces} almost shows that the entire first page of the slope spectral sequence is derived invariant.
\end{remark}

As a corollary, we recover the following result. 

\begin{corollary}[{\cite[Corollary 3.4.3]{bragg-derived}}]
    Suppose that $X$ and $Y$ are FM-equivalent K3 surfaces over $k$. If $X$ is supersingular, then so is $Y$, and $\sigma_0(X)=\sigma_0(Y)$.
\end{corollary}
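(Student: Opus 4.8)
The plan is to deduce both assertions from the derived invariance of Hodge--Witt cohomology established in Theorem~\ref{thm:everything is derived invariant for surfaces}, combined with the classical crystalline description of the invariants of a supersingular K3 surface.

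First I would settle the transfer of supersingularity. A K3 surface is supersingular precisely when its formal Brauer group $\Phi^2_X=\widehat{\mathrm{Br}}_X$ has infinite height, equivalently when the associated Dieudonn\'e module is that of $\widehat{\Ga}$. By Theorem~\ref{thm:formal group} the height is a derived invariant, so $X$ supersingular forces $Y$ supersingular. (Alternatively, the case $(i,j)=(0,2)$ of Theorem~\ref{thm:everything is derived invariant for surfaces} supplies an isomorphism $\H^2(W\Oscr_X)\iso\H^2(W\Oscr_Y)$ of Dieudonn\'e modules, and this module is exactly the Cartier module of the formal Brauer group, so infinite height is detected directly.) In particular $\sigma_0(Y)$ is defined.

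For the equality of Artin invariants I would use the case $(i,j)=(1,1)$ of Theorem~\ref{thm:everything is derived invariant for surfaces}, which provides an isomorphism $\H^1(W\Omega^1_X)\iso\H^1(W\Omega^1_Y)$ of Dieudonn\'e modules. For a supersingular K3 surface every slope of $\H^2(X/W)$ equals $1$, so the slope-$<1$ and slope-$>1$ Hodge--Witt contributions $\H^2(W\Oscr_X)$ and $\H^0(W\Omega^2_X)$ are torsion, while the finitely generated, torsion-free Dieudonn\'e module $\H^1(W\Omega^1_X)$ carries the entire rational slope-$1$ part and recovers the supersingular K3 $F$-crystal $\H^2(X/W)$ in the sense of Ogus. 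Ogus's classification asserts that the isomorphism class of this crystal both determines and is determined by the Artin invariant $\sigma_0$. The derived-invariant isomorphism of Dieudonn\'e modules therefore yields $\sigma_0(X)=\sigma_0(Y)$.

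The main obstacle is the crystalline input of the previous paragraph: one must verify that the bare Dieudonn\'e module $\H^1(W\Omega^1_X)$ retains enough structure to pin down $\sigma_0$. Ogus's invariant is most naturally read off from the K3 crystal together with its cup-product pairing, and cup products are not a priori derived invariants; hence the crux is to show that $\sigma_0$ is already determined by the $F$- and $V$-actions on $\H^1(W\Omega^1_X)$ alone --- for instance through the index inside $\H^2(X/W)$ of the Tate lattice $\H^2(X/W)^{F=p}$, which is an invariant of the $F$-crystal and whose $p$-adic length computes $2\sigma_0$ --- or else to produce the requisite pairing by independent means. Once this identification is in hand, the corollary follows at once from Theorem~\ref{thm:everything is derived invariant for surfaces}.
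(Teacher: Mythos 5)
Your treatment of supersingularity is fine and agrees with the paper: Theorem~\ref{thm:formal group} (equivalently the $(i,j)=(0,2)$ case of Theorem~\ref{thm:everything is derived invariant for surfaces}) transfers infinite height from $X$ to $Y$.

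The argument for $\sigma_0(X)=\sigma_0(Y)$ has a genuine gap, located exactly at the point you flag as the crux, and the gap cannot be closed along the route you propose. For a supersingular K3 surface the Dieudonn\'e module $\H^1(W\Omega^1_X)$ carries \emph{no} information about $\sigma_0$. Indeed, the slope spectral sequence gives a short exact sequence
\[
  0\to\H^1(W\Omega^1_X)\to\H^2(X/W)\to\ker d\to 0,
\]
and inside $\H^2(X/W)$ the subgroup $\H^1(W\Omega^1_X)$ coincides with the image of $\mathrm{NS}(X)\otimes_{\ZZ}W$, i.e.\ with $T\otimes_{\ZZ_p}W$ where $T=\H^2(X/W)^{\Phi=p}$ is the Tate module: both sublattices have colength $\sigma_0$ in $\H^2(X/W)$ and one contains the other. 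Consequently, as an abstract Dieudonn\'e module, $\H^1(W\Omega^1_X)\iso(W^{22},\,F=\sigma,\,V=p\sigma^{-1})$ is the unit-root module of rank $22$ --- the same for \emph{every} supersingular K3 surface, whatever its Artin invariant. The invariant $\sigma_0$ is stored not in this module but in how it sits inside $\H^2(X/W)$: the quotient $\H^2(X/W)/(T\otimes W)$ is Ogus's characteristic subspace, of $k$-dimension $\sigma_0$ (not $2\sigma_0$; it is the discriminant of the lattice $T$ that equals $p^{2\sigma_0}$). Your proposed repair --- reading $\sigma_0$ off the index of the Tate lattice --- therefore requires the \emph{integral} $F$-crystal $\H^2(X/W)$ as a derived invariant, which you do not have: Theorem~\ref{thm:Newton polygons} only makes the isocrystal $\H^2(X/K)$ a derived invariant, and rationally all supersingular K3 crystals of rank $22$ are isomorphic. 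So the approach through the $(1,1)$ group cannot be completed.

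The paper instead uses the one piece of Theorem~\ref{thm:everything is derived invariant for surfaces} your proposal never invokes: the commutative square intertwining the differential $d\colon\H^2(W\Oscr_X)\to\H^2(W\Omega^1_X)[p^{\infty}]$ with the corresponding differential for $Y$ through isomorphisms of Dieudonn\'e modules. By \cite[Section~II.7.2]{illusie-derham-witt} the image of $d$ is $p$-torsion and the Artin invariant equals $\dim_k\ker d$; the compatible vertical isomorphisms identify the two kernels, whence $\sigma_0(X)=\sigma_0(Y)$. In short, the derived invariant that detects $\sigma_0$ is the domino differential out of $\H^2(W\Oscr_X)$ (equivalently, the relative position of $\H^1(W\Omega^1)$ inside crystalline cohomology), not the isomorphism class of any single Hodge--Witt group.
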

\begin{proof}
    The image of the differential
    \[
      d:\H^2(W\mathscr{O}_X)\to\H^2(W\Omega^1_X)
    \]
    is $p$-torsion. Moreover, the Artin invariant of $X$ is equal to the dimension of the $k$-vector space $\ker d$. This follows for instance from the descriptions given in \cite[Section II.7.2]{illusie-derham-witt}. The result follows from Theorem \ref{thm:everything is derived invariant for surfaces}.
\end{proof}

We also find another proof of the following result of Tirabassi \cite{MR3782696} on derived equivalences of Enriques surfaces.
Recall from~\cite{bombieri-mumford-3} that an Enriques surface $X$ in characteristic $2$ is either classical,
singular, or supersingular, depending on whether the Picard scheme $\Pic_{X/k}$
is $\ZZ/2$, $\mu_2$, or $\alpha_2$, respectively. We call this the type of the Enriques surface.

\begin{corollary}[Tirabassi]
  If $X$ is an Enriques surface over an algebraically closed field of
  characteristic 2, then the type of $X$ is a derived invariant.
\end{corollary}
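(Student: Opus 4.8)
The plan is to encode the type of $X$ in the isomorphism class of the Dieudonné module $\H^2(X,W\mathscr{O}_X)\cong\TR_{-2}(X)$, which governs the Artin--Mazur formal Brauer group $\Phi^2(X,\Gm)=\BrF_X$ and which is a derived invariant by Theorem~\ref{thm:everything is derived invariant for surfaces} (the case $(i,j)=(0,2)$). First I would note that a Fourier--Mukai partner $Y$ of $X$ is again an Enriques surface, so that its type is defined; I would take this as known (cf.~\cite{MR3782696}), remarking that it also follows from the derived invariance of dimension and of the Hodge numbers of a smooth proper surface together with the numerical classification of surfaces. It then remains to recover the trichotomy $\Pic_{X/k}\in\{\ZZ/2,\mu_2,\alpha_2\}$ from $\H^2(W\mathscr{O}_X)$.

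The key step is the following description of $\H^2(W\mathscr{O}_X)$ for the three types, which I would extract from the classical computation of the crystalline and Hodge--Witt cohomology of Enriques surfaces in characteristic $2$ (see~\cite{bombieri-mumford-3} and~\cite[Section~II.7]{illusie-derham-witt}). Since $\chi(\mathscr{O}_X)=1$ one has $h^{0,2}=h^{0,1}=\dim_k\mathrm{Lie}(\Pic_{X/k})$, so $h^{0,2}=0$ in the classical case and $h^{0,2}=1$ in the singular and supersingular cases. From the exact sequence $0\to W\mathscr{O}_X\xrightarrow{V}W\mathscr{O}_X\to\mathscr{O}_X\to 0$ together with $\H^3(W\mathscr{O}_X)=0$ one gets $\H^2(\mathscr{O}_X)=\coker\!\big(V\colon\H^2(W\mathscr{O}_X)\to\H^2(W\mathscr{O}_X)\big)$; since $\H^2(W\mathscr{O}_X)$ is the degree-zero term of a coherent $R$-module it is complete and separated for the $V$-adic topology, so surjectivity of $V$ forces it to vanish. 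Hence the classical type ($h^{0,2}=0$) is exactly the case $\H^2(W\mathscr{O}_X)=0$, equivalently $\BrF_X=0$. For the remaining two types $h^{0,2}=1$, so $\H^2(W\mathscr{O}_X)\neq 0$, and they are separated by the slopes of $\H^2_{\crys}(X/W)$: the singular surfaces are ordinary, with a rank-one slope-$0$ summand, so that $\BrF_X=\widehat{\Gm}$ and $\H^2(W\mathscr{O}_X)$ is free of rank one over $W$ with $F$ bijective, whereas the supersingular surfaces have $\H^2_{\crys}(X/W)$ of pure slope $1$, so that $\BrF_X=\widehat{\Ga}$ and $\H^2(W\mathscr{O}_X)$ is a nonzero torsion module of $W$-rank $0$.

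Putting this together, the type of $X$ is read off from $\H^2(W\mathscr{O}_X)$ by the rule: classical if $\H^2(W\mathscr{O}_X)=0$; singular if $\rank_W\H^2(W\mathscr{O}_X)>0$; and supersingular if $\H^2(W\mathscr{O}_X)\neq 0$ while $\rank_W\H^2(W\mathscr{O}_X)=0$. Each of these three conditions depends only on the isomorphism class of $\H^2(W\mathscr{O}_X)$ as a $W$-module, hence a fortiori as a Dieudonné module, and so is preserved under Fourier--Mukai equivalence by Theorem~\ref{thm:everything is derived invariant for surfaces}. Therefore $X$ and $Y$ have the same type. I expect the only genuine difficulty to be the geometric input of the second paragraph, namely correctly matching the Bombieri--Mumford trichotomy for $\Pic_{X/k}$ with the rank-and-vanishing behavior of $\H^2(W\mathscr{O}_X)$; once this classical identification is in hand, the derived-invariance step is immediate from the surfaces theorem.
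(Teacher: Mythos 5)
Your reduction of the problem to the single derived invariant $\H^2(W\mathscr{O}_X)\iso\TR_{-2}(X)$, taken together with its Dieudonn\'e module structure via Theorem~\ref{thm:everything is derived invariant for surfaces}, is a sound strategy, and it isolates exactly the part of the $\E_1$-page that the paper's own proof (which quotes the full tables of \cite[Proposition~II.7.3.6]{illusie-derham-witt}) actually needs. Your treatment of the classical case is also correct: $h^{0,2}=0$ makes $V$ surjective on the $V$-adically separated module $\H^2(W\mathscr{O}_X)$, which therefore vanishes, and conversely. The genuine gap is in your description of the singular ($\mu_2$) case, and it breaks the trichotomy you propose.

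Every Enriques surface, of every type and in every characteristic, has $\rho=b_2=10$, so the crystalline cycle class map $\mathrm{NS}(X)\otimes_{\ZZ}K\to\H^2(X/K)$ is an isomorphism and $\H^2(X/K)$ is \emph{pure of slope $1$}. By~\eqref{eq:slopes of crystalline coho} this forces $\H^2(W\mathscr{O}_X)\otimes_WK\iso\H^2(X/K)_{[0,1)}=0$ for all three types. In particular a singular Enriques surface is not ordinary in the Newton--Hodge sense, $\BrF_X$ is not $\widehat{\Gm}$ (the Artin--Mazur functor is not even formally smooth here, since $\Pic_{X/k}$ is non-reduced), and $\H^2(W\mathscr{O}_X)$ is not free of rank one: Illusie's computation gives a nonzero \emph{torsion} module in both non-classical cases, namely (a one-dimensional $k$-vector space) with $F$ acting bijectively in the singular case and with $F$ acting by zero in the supersingular case. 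Consequently your rule ``singular $\iff$ positive $W$-rank, supersingular $\iff$ nonzero torsion of rank $0$'' classifies both non-classical types as supersingular and nothing as singular, so it cannot recover the type. The repair stays entirely within your framework: after splitting off the classical case by the vanishing of $\H^2(W\mathscr{O}_X)$, distinguish the remaining two types by the $F$-action on this module (equivalently, on its quotient $\H^2(W\mathscr{O}_X)/V\H^2(W\mathscr{O}_X)\iso\H^2(X,\mathscr{O}_X)$): bijective for singular, nilpotent for supersingular. This datum is part of the Dieudonn\'e module structure, hence is preserved by the isomorphism of Theorem~\ref{thm:everything is derived invariant for surfaces}, and with this substitution your argument is complete and coincides in substance with the paper's. (A secondary soft spot: Hodge numbers alone do not force an FM partner of an Enriques surface to be Enriques---a rational surface with $b_2=10$ has the same Hodge diamond as a classical Enriques surface---but deferring to Tirabassi on this point, as you also do, is fine; the paper implicitly assumes both partners are Enriques surfaces.)
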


\begin{proof}
    The $\E_1$ pages of the slope spectral sequences of Enriques surfaces in
    characteristic $2$ are recorded in \cite[Proposition
    II.7.3.6]{illusie-derham-witt}. In particular, we see by Theorem
    \ref{thm:everything is derived invariant for surfaces} that in this case
    the first page of the slope spectral sequence is a derived invariant, and
    this is more than enough to recover the type of $X$.
\end{proof}

\subsection{Slopes and isogeny invariants}\label{sec:slopes and isogeny invariants}

In this section we investigate derived invariants arising from topological Hochschild homology after inverting $p$.
\begin{definition}\label{def:isocrystal}
  An {\bf $F$-isocrystal} is a finite dimensional $K$-vector space $V$ equipped with a $\sigma$-linear map $\Phi\colon V\to V$. A morphism of $F$-isocrystals is a map of vector spaces commuting with the respective semilinear maps. 
\end{definition}
By fundamental results of Dieudonn\'{e} and Manin, it is known that when $k$ is algebraically closed, the category of $F$-isocrystals is abelian semisimple,
and its simple objects are in bijection with rational numbers $\lambda\in\QQ$
(see for example~\cite{demazure}).
The {\bf slopes} of an $F$-isocrystal $(M,\Phi)$ are the collection (with
multiplicities) of the rational numbers appearing in the decomposition of
$M\otimes_KK^{\un}$ into simple objects. Given a subset $S\subset\QQ$, we write
$M_S$ for the sub-isocrystal of $M$ whose slopes are those in the subset $S$.

For a smooth proper $k$-scheme $X$, we let
$\R\Gamma(X/W)=\R\Gamma_\crys(X/W)$ denote the
crystalline cohomology of $X$ over $W$ and we let
$\R\Gamma(X/K)=\R\Gamma(X/W)\otimes_WK$.
Each rational crystalline cohomology group $\H^i(X/K)$ of $X$ comes with an endomorphism
$\Phi$ induced by the absolute Frobenius of $X$, and the pair
$(\H^i(X/K),\Phi)$ is an $F$-isocrystal. Given a rational number $\lambda$, we define the {\bf slope number} of the $i$-th crystalline cohomology of $X$ by
\begin{equation}
    h^{i}_{\cris,\lambda}(X)=\dim_K\H^i(X/K)_{[\lambda]}.
\end{equation}
There are two facts which allow us to get
some control on the slope numbers. The first is that Poincar\'e duality
implies the existence of a perfect pairing
$$\H^i(X/K)\otimes_K\H^{2d-i}(X/K)\rightarrow\H^{2d}(X/K)\iso K(-d)$$ of
isocrystals, where $K(-d)$ is the $1$-dimensional isocrystal of slope
$d$. Thus, for each $\lambda\in\QQ$ we have a perfect pairing
$$\H^i(X/K)_{[\lambda]}\otimes_K\H^{2d-i}(X/K)_{[d-\lambda]}\rightarrow K(-d).$$
This implies
\begin{equation}\label{eq:constraint1}
  h^i_{\cris,\lambda}=h^{2d-i}_{\cris,d-\lambda}.
\end{equation}
Suppose that $X$ is projective. The hard Lefschetz theorem in crystalline cohomology
(see~\cite{katz-messing}) implies
that if $u=c_1(L)\in\H^2(X/K)$ is the rational crystalline Chern class of an
ample line bundle, then cupping with powers of $u$ gives isomorphisms
$$u^i\colon\H^{d-i}(X/K)\iso\H^{d+i}(X/K).$$ Since $u$ generates a
$1$-dimensional subspace of $\H^2(X/K)$ which is closed under Frobenius and has
pure slope $1$, it follows that $u^i$ induces isomorphisms
$$u^i\colon\H^{d-i}(X/K)_{[\lambda]}\iso\H^{d+i}(X/K)_{[i+\lambda]}(i).$$
This implies
\begin{equation*}
    h^{d-i}_{\cris,\lambda}=h^{d+i}_{\cris,i+\lambda},
\end{equation*}
or equivalently
\begin{equation}\label{eq:constraint2}
    h^{i}_{\cris,\lambda}=h^{i}_{\cris,i-\lambda}.
\end{equation}
In fact, by \cite[Corollary 2.2.4]{MR2881317} the relation~\eqref{eq:constraint2} still holds only under the assumption that $X$ is smooth and proper.

The Frobenius endomorphism on rational crystalline cohomology 
comes from an endomorphism of complexes $\R\Gamma(X/W)$. In particular, there
is a Frobenius-fixed $W$-lattice
$\H^i(X/W)/\tors$ inside $\H^i(X/K)$. This implies that the slopes $\lambda$
appearing in crystalline cohomology are all non-negative.
Equation~\eqref{eq:constraint1} implies that the slopes are additionally
bounded above by $d$ and finally~\eqref{eq:constraint2} implies that
the slopes of $\H^i(X/K)$ are bounded above by $i$.

The Hodge--Witt cohomology groups $\H^j(W\Omega^i_X)$ also come with a
$\sigma$-linear operator, denoted by $F$. Again, the pair $(\H^j(W\Omega_X^i)\otimes K,F)$ is an $F$-isocrystal, and given a rational number $\lambda\geq 0$ we write
\begin{equation}
    h^{i,j}_{\dRW,\lambda}(X)=\dim_K\H^j(X,W\Omega^i_X)_{[\lambda]}.
\end{equation}
By \cite[Corollaire II.3.5]{illusie-derham-witt}, we have a canonical isomorphism
\begin{equation}\label{eq:slopes of crystalline coho}
    (\H^{j-i}(W\Omega^i_X),p^iF)\iso\H^{j}(X/K)_{[i,i+1)}
\end{equation}
of $F$-isocrystals. In particular, $h^{i,j}_{\dRW,\lambda}$ is non-zero only if
$\lambda\in[0,1)$ in which case
$$h^{i,j}_{\dRW,\lambda}=h^{i+j}_{\cris,i+\lambda}.$$
It follows from~\eqref{eq:constraint2} that
\begin{equation}\label{eq:constraint3}
    h^{i,j}_{\dRW,\lambda}=h^{i+j}_{\cris,i+\lambda}=h^{d-(d-i-j)}_{\cris,i+\lambda}=h^{2d-i-j}_{\cris,d-j+\lambda}=h^{d-j,d-i}_{\dRW,\lambda},
\end{equation}
which we will use below. This last equality is a crystalline analog
of Hodge symmetry, which we do not have access to in de Rham cohomology.

We now discuss these invariants under FM-equivalence. We record the following
result, which is entirely analogous to Theorem \ref{thm:HH and hodge numbers}
(note however that we need no restrictions on $p$).

\begin{theorem}\label{thm:Newton polygons general}
    If $X$ and $Y$ are FM-equivalent smooth proper $k$-schemes, then there are isomorphisms
    \[
      \bigoplus_{j}\H^{j-i}(W\Omega^j_X)\otimes_W K \cong
      \bigoplus_{j}\H^{j-i}(W\Omega^j_Y)\otimes_W K
    \]
    of $F$-isocrystals. In particular, we have
    \[
      \sum_j h^{j,j-i}_{\dRW,\lambda}(X)=\sum_j h^{j,j-i}_{\dRW,\lambda}(Y)
    \]
    for each $i$ and $\lambda$.
\end{theorem}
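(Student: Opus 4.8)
The plan is to run the argument of Theorem~\ref{thm:HH and hodge numbers} with the descent spectral sequence~\eqref{eq:Hesselholt SS} in place of the HKR spectral sequence, now exploiting that the relevant degeneration holds unconditionally after inverting $p$. The two inputs are: first, that the Dieudonn\'e module $\TR_i(X)$ is a derived invariant (as recorded at the start of this section), so that $\TR_i(X)\otimes_W K\cong\TR_i(Y)\otimes_W K$ as $F$-isocrystals; and second, that the descent spectral sequence degenerates rationally by Proposition~\ref{prop:rational degeneration}.

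First I would observe that, by Remark~\ref{rem:differentials are compatible}(1), the descent spectral sequence~\eqref{eq:Hesselholt SS} is a spectral sequence of Dieudonn\'e modules, so after applying $-\otimes_W K$ it becomes a spectral sequence of $F$-isocrystals with $\E_2$-page $\H^t(W\Omega^s_X)\otimes_W K$ converging to $\TR_{s-t}(X)\otimes_W K$. Each term is finite dimensional over $K$ (the infinitely generated part of the Hodge--Witt groups is $p$-torsion, hence dies rationally), so these are genuine $F$-isocrystals, and the Frobenius is the operator $F$ with no Tate twist, since Hesselholt's comparison identifying the graded pieces is compatible with $F$; this should be contrasted with the crystalline--$\TP$ spectral sequence of Remark~\ref{rem:differentials are compatible}(2), where twists do appear. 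By Proposition~\ref{prop:rational degeneration} this spectral sequence degenerates, so for each $i$ the $F$-isocrystal $\TR_i(X)\otimes_W K$ carries a finite filtration whose associated graded is $\bigoplus_j\H^{j-i}(W\Omega^j_X)\otimes_W K$, the summand indexed by $j$ coming from $\E_2^{j,j-i}$ under the reindexing $s=j$, $t=j-i$ of $s-t=i$. Over an algebraically closed $k$ the category of $F$-isocrystals is semisimple by Dieudonn\'e--Manin, so this filtration splits and
\[
  \TR_i(X)\otimes_W K\cong\bigoplus_j\H^{j-i}(W\Omega^j_X)\otimes_W K
\]
as $F$-isocrystals; combining this with the derived invariance of $\TR_i$ and the same computation for $Y$ yields the asserted isomorphism.

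The one genuine subtlety, and the main point to handle with care, is that over a non-algebraically closed perfect field the category of $F$-isocrystals need not be semisimple, so the filtration above need not split over $K$ itself. I would resolve this by base changing to $\bar k$: the Newton slopes and the slope numbers $h^{i,j}_{\dRW,\lambda}$ are computed after extending scalars along $K\to K^{\un}$, hence are insensitive to this base change, and over $\bar k$ the displayed splitting is available. Equivalently, and more cheaply for the ``in particular'' clause, the degeneration only records the identity $[\TR_i(X)\otimes_W K]=\sum_j[\H^{j-i}(W\Omega^j_X)\otimes_W K]$ in the Grothendieck group of $F$-isocrystals; since the slope-$\lambda$ multiplicity is additive in short exact sequences, taking slope-$\lambda$ parts gives $\dim_K(\TR_i(X)\otimes_W K)_{[\lambda]}=\sum_j h^{j,j-i}_{\dRW,\lambda}(X)$, and the derived invariance of $\TR_i$ forces this to agree for $X$ and $Y$. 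The remaining bookkeeping is to confirm that the indexing and the Frobenius structure match those used to define $h^{j,j-i}_{\dRW,\lambda}$, which holds precisely because the descent spectral sequence involves no Tate twist.
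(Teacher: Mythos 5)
Your proposal follows the same route as the paper's proof: rational degeneration of the descent spectral sequence (Proposition~\ref{prop:rational degeneration}) combined with the derived invariance of the $F$-isocrystals $\TR_i(X)\otimes_W K$, then reading off graded pieces. The difference is one of care rather than strategy, but it is worth recording. The paper's proof simply asserts that degeneration yields a (non-canonical) decomposition $\TR_i(X)\otimes_W K\cong\bigoplus_j\H^{j-i}(W\Omega^j_X)\otimes_W K$ of $F$-isocrystals; you correctly point out that degeneration a priori produces only a filtration by sub-$F$-isocrystals, and that splitting it requires an argument. Your appeal to Dieudonn\'e--Manin semisimplicity handles algebraically closed $k$, and your observation that no slope argument can substitute for it is also right: unlike the slope spectral sequence, where the twists $p^iF$ place the graded pieces of $\H^n(X/K)$ in disjoint slope ranges $[i,i+1)$ (so the Newton decomposition, valid over any perfect field, splits the filtration canonically), here all graded pieces $(\H^{j-i}(W\Omega^j_X)\otimes_WK,F)$ have slopes in $[0,1)$, and over, say, $k=\FF_p$ the category of $F$-isocrystals is not semisimple, so splitting is not formal. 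Your Grothendieck-group argument is the efficient fix: additivity of slope multiplicities in exact sequences gives $\dim_K(\TR_i(X)\otimes_WK)_{[\lambda]}=\sum_j h^{j,j-i}_{\dRW,\lambda}(X)$ over any perfect $k$, which is the ``in particular'' clause and is the only part of the theorem consumed later in the paper (Theorem~\ref{thm:Newton polygons}, the zeta function, Betti numbers, and Hodge--Witt numbers all use slope numbers only). The one caveat: for $k$ perfect but not algebraically closed, your base change establishes the first displayed isomorphism only after extension of scalars to $W(\bar k)[1/p]$, not over $K$ itself; but the paper's own three-sentence proof establishes nothing stronger, so this is a limitation you have exposed rather than a gap you have introduced.
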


\begin{proof}
    By Proposition \ref{prop:rational degeneration}, the descent spectral
    sequence~\eqref{eq:Hesselholt SS} degenerates after tensoring with $K$. We
    therefore obtain a (non-canonical) decomposition
    \begin{equation}
        \TR_i(X)\otimes_W K\cong \bigoplus_{j}\H^{j-i}(W\Omega^j_X)\otimes_W K
    \end{equation}
    of $F$-isocrystals, and similarly for $Y$. The $F$-crystals $\TR_i$ are derived
    invariants, so we get the claimed isomorphism, and the equality of slope numbers
    follows.
\end{proof}

We now define slope numbers for $\TR$ by letting
\[
  h^{\TR}_{n,\lambda}=\dim_K(\TR_n(X)\otimes_WK)_{[\lambda]}.
\]
The proof of Theorem~\ref{thm:Newton polygons general} expressed the fact that
\[
  h^{\TR}_{n,\lambda}=\sum_{i-j=n}h^{i,j}_{\dRW,\lambda}.
\]

We recall the following result of Popa--Schnell (extended to positive
characteristic in Theorem A.1 of \cite{honigs-3} by Achter, Casalaina-Martin,
Honigs, and Vial). 

\begin{theorem}\label{thm:Popa-Schnell}
    If $X$ and $Y$ are FM-equivalent smooth proper varieties over an arbitrary field $k$, then $(\Pic^0_X)_{\red}$ is isogenous to $(\Pic^0_Y)_{\red}$.
\end{theorem}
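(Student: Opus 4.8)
The plan is to extract the isogeny class of $(\Pic^0_X)_{\red}$ from the cohomological realization of the Fourier--Mukai kernel and to reduce the question to the theory of abelian varieties up to isogeny. Fix a prime $\ell$ invertible in $k$. Since an abelian variety over a finitely generated field is determined up to isogeny by the Galois representation on its rational $\ell$-adic Tate module (via the isogeny theorems of Tate, Zarhin, and Faltings), and $(\Pic^0_X)_{\red}$ has Tate module dual to $\H^1(X_{\bar k},\QQ_\ell)$, it suffices to produce a Galois-equivariant isomorphism relating the weight-one $\ell$-adic cohomology of $X$ and $Y$. A standard spreading-out argument first reduces an arbitrary field $k$ to a finitely generated one, and then to a finite field, where Tate's isogeny theorem applies and the matching of Frobenius characteristic polynomials on $\H^1$ gives the isogeny directly.

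Next I would use that the kernel $P\in\Dscr^b(X\times_k Y)$ carries an algebraic, hence Galois-invariant, correspondence class $\mathrm{ch}(P)\sqrt{\mathrm{td}}$ inducing the Mukai transform on $\bigoplus_n\H^n(X_{\bar k},\QQ_\ell)$. Decomposing this class into Künneth components and inserting the appropriate Tate twists, each component becomes a Galois-equivariant map that preserves Frobenius weight; since the transform is an isomorphism, its weight-one part yields a Galois-equivariant isomorphism
\[
    \bigoplus_i \H^{2i+1}(X_{\bar k},\QQ_\ell)(i)\;\cong\;\bigoplus_j \H^{2j+1}(Y_{\bar k},\QQ_\ell)(j).
\]
Equivalently, one can invoke Rouquier's theorem that a Fourier--Mukai equivalence induces an isomorphism of algebraic groups $\Aut^0_X\times\Pic^0_X\cong\Aut^0_Y\times\Pic^0_Y$, which repackages the same weight-one datum in geometric form.

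The hard part is that neither formulation isolates the Picard summand: the Mukai transform mixes $\H^1$ with the higher odd cohomology $\H^3,\H^5,\dots$, all of the same weight after twisting, and dually Rouquier's isomorphism entangles $\Pic^0$ with the abelian part of $\Aut^0$. To disentangle these I would pass to perfect fields and take the abelian-variety quotient (Chevalley's structure theorem), which is isogeny-invariant, reducing to an isogeny
\[
    A(\Aut^0_X)\times(\Pic^0_X)_{\red}\;\sim\;A(\Aut^0_Y)\times(\Pic^0_Y)_{\red},
\]
and then apply the Nishi--Matsumura theorem, which identifies $A(\Aut^0_X)$ up to isogeny with a subvariety of the Albanese $\mathrm{Alb}_X$, together with the canonical isogeny $(\Pic^0_X)_{\red}\sim\mathrm{Alb}_X$ supplied by any polarization (the Picard variety being dual to the Albanese). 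Combining these identifications and cancelling common isogeny factors, which is legitimate since abelian varieties up to isogeny form a semisimple category, should yield $(\Pic^0_X)_{\red}\sim(\Pic^0_Y)_{\red}$. The delicate point, and where the genuine geometry enters, is precisely this cancellation: controlling how the automorphism and Picard contributions interact is exactly what the passage to finite fields and Honda--Tate theory accomplishes in the treatment of Achter--Casalaina-Martin--Honigs--Vial.
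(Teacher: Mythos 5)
First, note what the paper itself does with this statement: it does not prove it. The theorem is quoted, with the characteristic-zero case attributed to Popa--Schnell and the positive-characteristic extension to Theorem~A.1 of \cite{honigs-3} (Achter--Casalaina-Martin--Honigs--Vial), the paper adding only the remark that projectivity enters those proofs solely through the existence of a Fourier--Mukai kernel. Your write-up ends the same way, by deferring ``the delicate point'' to ACMHV, so it is not a self-contained proof either --- and the step you defer is precisely where the argument breaks, not a technicality. Concretely, after Rouquier, Chevalley, and Nishi--Matsumura you have exactly three facts: (i) $A_X\times P_X\sim A_Y\times P_Y$, where $P_X=(\Pic^0_X)_{\red}$ and $A_X$ is the abelian-variety quotient of $(\Aut^0_X)_{\red}$; (ii) $A_X$ is an isogeny summand of $P_X$ (Nishi--Matsumura maps $A_X$ with finite kernel to $\mathrm{Alb}_X$, and a polarization gives $\mathrm{Alb}_X\sim P_X$); (iii) likewise $A_Y$ is an isogeny summand of $P_Y$. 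These facts do \emph{not} imply $P_X\sim P_Y$: the assignment $A_X=P_X=E$ (an elliptic curve), $A_Y=0$, $P_Y=E\times E$ satisfies (i)--(iii), yet $E\not\sim E\times E$. Semisimplicity of the isogeny category lets you cancel a factor that is \emph{known} to be common to both sides; here the factors you want to cancel are $A_X$ and $A_Y$, and knowing how to match those is essentially equivalent to the theorem itself. The ``cancellation'' is circular.

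Second, the authority you invoke does not do what you say it does. The ACMHV appendix does not pass to finite fields or use Honda--Tate theory: its statement is over an arbitrary field, where such tools are unavailable, and in any case an isogeny over $\bar k$ or over a finite-field specialization does not descend or lift to an isogeny over $k$. What ACMHV actually do is adapt Popa--Schnell's finer analysis of the Rouquier isomorphism itself. Writing $(\beta,\alpha)\colon P_X\to\Aut^0_Y\times P_Y$ for its restriction to $P_X$, the missing ingredient is a lemma, proved by applying the intertwining property of the kernel $P$ to skyscraper sheaves, controlling fixed points of the automorphisms $\beta(L)$ for $L\in\Pic^0_X$; this is the extra relation between the $\Aut$-part and the $\Pic$-part that (i)--(iii) lack. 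It feeds into a case analysis: if $\ker(\alpha)$ is finite one plays $\Phi_P$ against $\Phi_P^{-1}$ to get an isogeny, while a positive-dimensional $\ker(\alpha)$ forces a positive-dimensional abelian variety to act on $Y$ with the resulting fibration contributing matching isogeny factors to both sides. (In characteristic zero Popa--Schnell can also use $\dim P_X=\dim P_Y$, which follows from HKR plus Hodge symmetry; in characteristic $p$ that equality is unavailable --- see Remark~\ref{rem:Popa Schnell is not enough} --- which is exactly why the appendix argument is needed.) Finally, your finite-field fallback could not close the gap even in principle, for the reason you yourself identify: after twisting, every summand $\H^{2i+1}(X_{\bar k},\QQ_\ell)(i)$ has weight one, so Frobenius data sees only the total odd cohomology preserved by the Mukai transform and cannot isolate $\H^1$; your opening $\ell$-adic reduction stalls on the same obstruction.
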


\begin{remark}
    The theorem is stated in~\cite{honigs-3} only for smooth projective
    varieties. But, the only place projectivity is used in the proof is to
    guarantee the existence of a FM-equivalence, which we assume to exist.
\end{remark}

Equivalently, the isogeny class of the Albanese variety is a derived invariant. This has the following immediate consequence.

\begin{corollary}\label{cor:Popa Schnell consequence}
    If $X$ and $Y$ are FM-equivalent smooth proper varieties over our perfect
    field $k$ of positive characteristic, then 
    the $F$-crystals $\H^1(X/W)$ and $\H^1(Y/W)$ are isogenous and there
    are equalities $h^1_{\crys,\lambda}(X)=h^1_{\crys,\lambda}(Y)$ of slope numbers
    for all $\lambda\in[0,1]$.
\end{corollary}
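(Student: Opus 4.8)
The plan is to reduce the assertion to the corresponding statement for abelian varieties, where the first crystalline cohomology is governed by the Dieudonn\'e module and becomes insensitive to isogeny after inverting $p$. First I would invoke Theorem~\ref{thm:Popa-Schnell}, which provides an isogeny $(\Pic^0_X)_{\red}\sim(\Pic^0_Y)_{\red}$ of abelian varieties over $k$. Dualizing, the Albanese varieties $\mathrm{Alb}_X$ and $\mathrm{Alb}_Y$ are likewise isogenous, since duality of abelian varieties preserves the isogeny relation.

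The key geometric input is the identification of $\H^1(X/K)$ with the first rational crystalline cohomology of the Albanese variety as an $F$-isocrystal. After choosing a base point, the canonical map $X\to\mathrm{Alb}_X$ induces an isomorphism of $F$-crystals $\H^1(\mathrm{Alb}_X/W)\xrightarrow{\sim}\H^1(X/W)$; both sides are torsion-free, and this is a standard fact (see~\cite{illusie-derham-witt}). Tensoring with $K$ gives an isomorphism $\H^1(X/K)\cong\H^1(\mathrm{Alb}_X/K)$ of $F$-isocrystals, and similarly for $Y$. For an abelian variety $A$, the module $\H^1(A/W)$ is (contravariantly) its Dieudonn\'e module, so any isogeny $A\to B$ induces an isomorphism $\H^1(B/K)\xrightarrow{\sim}\H^1(A/K)$ of $F$-isocrystals. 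Chaining these identifications with the isogeny $\mathrm{Alb}_X\sim\mathrm{Alb}_Y$ yields an isomorphism $\H^1(X/K)\cong\H^1(Y/K)$ of $F$-isocrystals; this is precisely the assertion that the $F$-crystals $\H^1(X/W)$ and $\H^1(Y/W)$ are isogenous.

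Finally, isomorphic $F$-isocrystals have identical slope decompositions, so $h^1_{\crys,\lambda}(X)=h^1_{\crys,\lambda}(Y)$ for every $\lambda$; since the slopes of $\H^1$ of a smooth proper variety lie in $[0,1]$ (as recorded in the discussion preceding the corollary, using that the Frobenius-fixed lattice forces non-negativity and~\eqref{eq:constraint2} bounds the slopes above by $1$), the equalities for all $\lambda\in[0,1]$ exhaust the content. The main obstacle is the careful verification of the $F$-isocrystal identification with the Albanese, including its compatibility with the Frobenius structure and the passage to the reduced Picard scheme; all of these subtleties (in particular any inseparability in $\Pic_{X/k}$) become harmless once $p$ is inverted, which is exactly the setting of the corollary.
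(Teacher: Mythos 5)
Your proposal is correct and is essentially the paper's own argument: the paper states this corollary as an immediate consequence of Theorem~\ref{thm:Popa-Schnell}, the implicit reasoning being exactly your chain (isogeny of reduced Picard/Albanese varieties $\Rightarrow$ isomorphism of $F$-isocrystals $\H^1(X/K)\cong\H^1(\mathrm{Alb}_X/K)\cong\H^1(\mathrm{Alb}_Y/K)\cong\H^1(Y/K)$, hence equal slope numbers, with slopes of $\H^1$ confined to $[0,1]$). You have simply filled in the standard details that the paper leaves unstated.
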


\begin{remark}\label{rem:Popa Schnell is not enough} 
  In any characteristic, the tangent space to $\Pic^0_X$ at the origin is
  naturally identified with $\H^1(X,\mathscr{O}_X)$. If the characteristic of
  $k$ is zero, then $\Pic^0_X$ is automatically reduced, so Theorem
  \ref{thm:Popa-Schnell} implies that the Hodge number $h^{0,1}$ is a derived
  invariant in characteristic $0$. Similarly, in characteristic $0$, the Hodge
  number $h^{1,0}$ is
  determined by the dimension of the Albanese of $X$, and hence is also a
  derived invariant. In positive characteristic, the isogeny class of the
  Albanese does not in general determine the Hodge numbers $h^{0,1}$ and
  $h^{1,0}$, and therefore Theorem \ref{thm:Popa-Schnell} does not imply the
  invariance of $h^{0,1}$ or $h^{1,0}$ under derived equivalence.
\end{remark}

In dimension $\leq 3$, combining Theorem \ref{thm:Newton polygons general} and
Corollary \ref{cor:Popa Schnell consequence} with the constraints~\eqref{eq:constraint1} and~\eqref{eq:constraint2} give us complete control of
the isocrystals $\H^i(X/K)$ and $\H^j(W\Omega^i_X)\otimes_W K$.

\begin{theorem}\label{thm:Newton polygons}
  Suppose that $X$ and $Y$ are FM-equivalent smooth proper $k$-schemes of dimension $\leq 3$. For each $i,j$, there exist isomorphisms
  \[
    \H^j(W\Omega^i_X)\otimes_W K\cong\H^j(W\Omega^i_Y)\otimes_W K\hspace{1cm}\mbox{and}\hspace{1cm}\H^i(X/K)\cong \H^i(Y/K)
  \]
  of $F$-isocrystals. In particular, we have
  \[
    h^{i,j}_{\dRW,\lambda}(X)=h^{i,j}_{\dRW,\lambda}(Y)\hspace{1cm}\mbox{and}\hspace{1cm}h^{i}_{\cris,\lambda}(X)=h^i_{\cris,\lambda}(Y)
  \]
  for all $i,j,\lambda$.
\end{theorem}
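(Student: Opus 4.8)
The plan is to prove the two asserted families of isomorphisms simultaneously, exploiting that they encode the same data viewed along two transverse directions. By the slope decomposition \eqref{eq:slopes of crystalline coho}, the slope-$[i,i+1)$ part of $\H^n(X/K)$ is (a Tate twist of) $\H^{n-i}(W\Omega^i_X)\otimes_W K$; hence proving that every $\H^n(X/K)$ is a derived-invariant isocrystal is equivalent to proving that every $\H^j(W\Omega^i_X)\otimes_W K$ is. Writing $M^{i,j}:=\H^j(W\Omega^i_X)\otimes_W K$, the crystalline groups collect the cells along the diagonals $i+j=n$, while the $\TR$-groups collect them along the antidiagonals $i-j=m$; I will play these two groupings off against each other.

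I start from three inputs. First, Theorem \ref{thm:Newton polygons general} gives that each antidiagonal sum $\bigoplus_{i-j=m}M^{i,j}\cong\TR_m(X)\otimes_W K$ is a genuine derived-invariant isocrystal. Second, Corollary \ref{cor:Popa Schnell consequence} makes $\H^1(X/K)$ a derived invariant, and Poincaré duality \eqref{eq:constraint1} then makes $\H^{2d-1}(X/K)$ one as well; via \eqref{eq:slopes of crystalline coho} this pins down the four ``edge'' cells $M^{0,1},M^{1,0}$ and their top-degree partners $M^{d-1,d},M^{d,d-1}$. Third, the internal symmetry \eqref{eq:constraint3}, namely $h^{i,j}_{\dRW,\lambda}=h^{d-j,d-i}_{\dRW,\lambda}$ (a consequence of \eqref{eq:constraint2}, valid for smooth proper $X$), identifies $M^{i,j}$ with $M^{d-j,d-i}$ up to Tate twist inside each individual variety and visibly preserves antidiagonals.

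The heart is then a finite bookkeeping for $d\le 3$. The extreme antidiagonals $m=\pm d$ consist of a single cell, so $M^{d,0}$ and $M^{0,d}$ are invariant outright. A two-cell antidiagonal whose cells are interchanged by \eqref{eq:constraint3}---for instance $\{M^{0,2},M^{1,3}\}$ on $m=-2$---satisfies $M^{0,2}\cong M^{1,3}$, so each cell is ``half'' the invariant antidiagonal sum and is therefore itself invariant; the same move handles $\{M^{2,0},M^{3,1}\}$. On the main antidiagonal $m=0$ the corner cells $M^{0,0}$ and $M^{d,d}$ are trivial (slope zero, of dimension the number of components, resp.\ one), and \eqref{eq:constraint3} pairs the remaining two cells $M^{1,1}\cong M^{2,2}$, again yielding each as half of the invariant sum. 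The only cells not reached this way are the two ``interior'' cells $M^{2,1}$ and $M^{1,2}$ of $\H^3$; these I obtain by subtraction, using that on the antidiagonal $m=1$ the cells $M^{1,0}$ and $M^{3,2}$ are already known (the latter as the top slope piece of the invariant $\H^5(X/K)$), so $M^{2,1}$ is the invariant antidiagonal sum minus known terms, and symmetrically for $M^{1,2}$ on $m=-1$. For $d\le 2$ the same scheme applies with fewer cells.

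Assembling, every $M^{i,j}$ has derived-invariant slope numbers, and reorganizing along diagonals recovers the crystalline slope numbers $h^i_{\cris,\lambda}$, proving the numerical assertions. Over an algebraically closed field the category of $F$-isocrystals is semisimple (Dieudonné--Manin), so equal slope numbers already yield the asserted isomorphisms. Over a general perfect field one must upgrade: the antidiagonal sums $\TR_m(X)\otimes_W K$ and $\H^1(X/K)$ are genuinely derived-invariant isocrystals, and \eqref{eq:constraint2} is realized by honest isomorphisms up to Tate twist, so invoking Krull--Schmidt in the finite-length, Hom-finite abelian category of $F$-isocrystals lets me cancel the repeated summand produced by each halving step to conclude $M^{i,j}_X\cong M^{i,j}_Y$. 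I expect the main obstacle to be precisely this disentangling-with-cancellation: verifying that the symmetry pairings together with the single subtraction step exhaust every cell of the $d=3$ diamond, and that the ``halving'' is legitimate at the level of isocrystals and not merely of slope numbers.
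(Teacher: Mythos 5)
Your proposal follows essentially the same route as the paper's own proof: it likewise combines the derived invariance of the antidiagonal sums $h^{\TR}_{m,\lambda}$ from Theorem~\ref{thm:Newton polygons general}, the invariance of $\H^1(X/K)$ (Corollary~\ref{cor:Popa Schnell consequence}) and hence of $\H^{2d-1}(X/K)$ by Poincar\'e duality, the single extreme cells coming from $\TR_{\pm d}$, and the symmetry~\eqref{eq:constraint3}, and then carries out exactly your halving-and-subtraction bookkeeping on the $d=3$ diamond before deducing the crystalline statement from the rational degeneration of the slope spectral sequence. Your closing concern about upgrading equal slope numbers to honest isomorphisms of $F$-isocrystals over a non-algebraically-closed perfect field (via Krull--Schmidt cancellation) is a point the printed proof passes over silently---it treats invariance of the slope numbers as ``enough,'' which strictly speaking invokes Dieudonn\'e--Manin and so an algebraically closed base---so on that final step you are, if anything, more careful than the paper.
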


\begin{proof}
    We prove that the derived invariance of the $F$-isocrystal $\H^1(X/K)$
    given by Corollary~\ref{cor:Popa Schnell consequence} and the derived invariance of $\TR$ is enough to
    get the derived invariance of each $h^{i,j}_{\dRW,\lambda}$. This is enough
    to prove the result for Hodge--Witt cohomology and the statement for
    crystalline cohomology follows from the degeneration of the slope spectral
    sequence. We fix $\lambda\in[0,1)$. We know to begin that
    $h^{i,j}_{\dRW,\lambda}$ is a derived invariant for
    $$(i,j)\in\{(0,0),(3,3),(0,1),(1,0),(3,2),(2,3),(0,3),(3,0)\}$$ from the
    derived invariance of
    $\H^0(X/K)$, $\H^6(X/K)$, $\H^1(X/K)$, $\H^5(X/K)$
    (by Poincar\'e duality), $\TR_{-3}(X)$, and $\TR_3(X)$,
    respectively.
    Now,
    \begin{align*}
        h^{\TR}_{2,\lambda}&=h^{3,1}_{\dRW,\lambda}+h^{2,0}_{\dRW,\lambda},\\
        h^{\TR}_{1,\lambda}&=h^{3,2}_{\dRW,\lambda}+h^{2,1}_{\dRW,\lambda}+h^{1,0}_{\dRW,\lambda},\\
        h^{\TR}_{0,\lambda}&=h^{3,3}_{\dRW,\lambda}+h^{2,2}_{\dRW,\lambda}+h^{1,1}_{\dRW,\lambda}+h^{0,0}_{\dRW,\lambda},\\
        h^{\TR}_{-1,\lambda}&=h^{2,3}_{\dRW,\lambda}+h^{1,2}_{\dRW,\lambda}+h^{0,1}_{\dRW,\lambda},\\
        h^{\TR}_{-2,\lambda}&=h^{1,3}_{\dRW,\lambda}+h^{0,2}_{\dRW,\lambda}.
    \end{align*}
    But,
    \begin{align*}
        h^{3,1}_{\dRW,\lambda}&=h^{2,0}_{\dRW,\lambda},\\
        h^{2,2}_{\dRW,\lambda}&=h^{1,1}_{\dRW,\lambda},\\
        h^{1,3}_{\dRW,\lambda}&=h^{0,2}_{\dRW,\lambda}
    \end{align*}
    by~\eqref{eq:constraint3}. When combining this with the derived invariance
    of the Hodge--Witt slope numbers already established above and the
    derived invariance of the $\TR$ slope numbers, we conclude that each de
    Rham--Witt slope number is a derived invariant, as desired.
\end{proof}

We obtain another proof of the following result of Honigs \cite{honigs-3}.
\begin{corollary}
  Suppose that $X$ and $Y$ are smooth proper schemes over a finite field
  $\mathbb{F}_q$ of dimension $\leq 3$. If $X$ and $Y$ are FM-equivalent,
  then $\zeta(X)=\zeta(Y)$.
\end{corollary}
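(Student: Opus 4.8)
The plan is to reduce the equality of zeta functions to the isomorphism of crystalline cohomology $F$-isocrystals furnished by Theorem~\ref{thm:Newton polygons}, using that $\zeta(X)$ is computed by the crystalline Frobenius. Write $q=p^a$, so that $W=W(\mathbb{F}_q)$, $K=W[p^{-1}]$, and let $\Phi$ denote the $\sigma$-linear Frobenius endomorphism of $\H^i(X/K)$. The geometric $q$-power Frobenius acts as $\Phi^a$; since $\sigma^a=\id$ on $W(\mathbb{F}_q)$, this operator $\Phi^a$ is genuinely $K$-linear. First I would invoke the Lefschetz trace formula in crystalline cohomology (Katz--Messing~\cite{katz-messing}), which gives
\[
  \zeta(X,t)=\prod_{i=0}^{2d}\det\bigl(1-t\,\Phi^a\mid\H^i(X/K)\bigr)^{(-1)^{i+1}},
\]
and identically for $Y$. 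Thus it suffices to prove that for each $i$ the characteristic polynomial of $\Phi^a$ on $\H^i(X/K)$ equals that on $\H^i(Y/K)$.

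Next I would feed in Theorem~\ref{thm:Newton polygons}, which for $\dim\leq 3$ provides an isomorphism of $F$-isocrystals $\phi_i\colon\H^i(X/K)\xrightarrow{\sim}\H^i(Y/K)$, that is, a $K$-linear isomorphism satisfying $\phi_i\circ\Phi_X=\Phi_Y\circ\phi_i$. Iterating this intertwining relation $a$ times yields $\phi_i\circ\Phi_X^a=\Phi_Y^a\circ\phi_i$. Because $\Phi^a$ is $K$-linear on both sides, $\Phi_X^a$ and $\Phi_Y^a$ are conjugate $K$-linear endomorphisms and therefore have the same characteristic polynomial. Substituting into the trace-formula expression above gives $\zeta(X,t)=\zeta(Y,t)$, as desired.

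The essential content is entirely contained in Theorem~\ref{thm:Newton polygons}: the point is that over a finite field equality of slopes alone does not pin down the Frobenius eigenvalues, so what is really needed is the stronger statement that $\H^i(X/K)$ and $\H^i(Y/K)$ are isomorphic as $F$-isocrystals over $K$ (which in turn rests on the Popa--Schnell input for $\H^1$ together with the derived invariance of $\TR$, rather than on slope numbers alone). Granting that, the only genuinely new bookkeeping is the passage from the semilinear $\Phi$ to the linear $\Phi^a$: this is the step that makes ``same characteristic polynomial'' meaningful, and it is exactly where the hypothesis $q=p^a$ enters, via $\sigma^a=\id$ on $W(\mathbb{F}_q)$. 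I expect no further obstacle, since conjugate linear operators automatically share a characteristic polynomial.
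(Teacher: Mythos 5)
Your argument is correct as a deduction from Theorem~\ref{thm:Newton polygons} as stated, but it is worth recording how it differs from the paper's own two-sentence proof. The paper passes to $X_{\overline{\FF}_q}$ and asserts that the Frobenius eigenvalues on $\ell$-adic cohomology are ``determined by the slopes of the crystalline cohomology,'' which are derived invariants by Theorem~\ref{thm:Newton polygons}. Read literally, that first assertion is exactly what you correctly identify as false: slopes are only the $p$-adic valuations of the eigenvalues, and over a finite field they do not pin down $\zeta(X)$ (two ordinary elliptic curves over $\FF_p$ have the same Newton polygon but different point counts). Your route --- the Katz--Messing/crystalline trace formula expressing $\zeta(X)$ through the characteristic polynomials of the $K$-linear operator $\Phi^a$, the isomorphisms $\H^i(X/K)\cong\H^i(Y/K)$ of $F$-isocrystals over $K=W(\FF_q)[p^{-1}]$ supplied by the theorem, and the observation that intertwined linear operators are conjugate --- is the argument that actually delivers the conclusion, and the passage from the $\sigma$-linear $\Phi$ to the linear $\Phi^a$ via $\sigma^a=\id$ is precisely the step the paper's wording elides.

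One caveat: the strength of Theorem~\ref{thm:Newton polygons} that you invoke is an isomorphism of $F$-isocrystals over $K$ itself with $k=\FF_q$, and while this is what the theorem states, its proof in the paper proceeds by equating the slope numbers $h^{i,j}_{\dRW,\lambda}$ and then (implicitly) appealing to the Dieudonn\'e--Manin classification, which determines an $F$-isocrystal by its slopes only over an algebraically closed field. Over $\FF_q$ the category of $F$-isocrystals is not semisimple and slopes do not determine isomorphism type, so strictly the paper's proof of the theorem justifies the isomorphism only after base change to $K^{\un}$; in particular your characterization that the theorem ``rests on the Popa--Schnell input together with the derived invariance of $\TR$ rather than on slope numbers'' describes what a corrected proof over $K$ would look like, not the proof the paper gives. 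So your derivation is sound relative to the stated theorem, whereas the paper's derivation is sound relative to what the theorem's proof actually establishes but then founders on the slopes-determine-eigenvalues step; closing the loop completely requires either rerunning the theorem's bookkeeping in the Krull--Schmidt category of $F$-isocrystals over $K$ (keeping track of the duals and Tate twists that the Lefschetz and Poincar\'e symmetries introduce) or falling back on Honigs' $\ell$-adic argument in \cite{honigs-3}.
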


\begin{proof}
    The eigenvalues of Frobenius acting on the $\ell$-adic cohomology of
    $X_{\overline{\FF}_q}$ are determined by the slopes of the crystalline
    cohomology of $X_{\overline{\FF}_q}$. These slopes are derived invariant by
    Theorem~\ref{thm:Newton polygons}.
\end{proof}

Recall that the Betti numbers of $X$ are defined to be $b_n(X)=\dim_K\H^n(X/K)$
for $X$ smooth and proper over $k$. In general, we have only an inequality
$b_n(X)\leq\dim_k\H^n_{\dR}(X/k)$, with equality if and only if $\H^n(X/W)$ and
$\H^{n+1}(X/W)$ are torsion-free.

\begin{corollary}\label{cor:betti numbers}
    If $X$ and $Y$ are FM-equivalent smooth proper $k$-schemes of
    dimension $\leq 3$, then $b_n(X)=b_n(Y)$ for each $n$.
\end{corollary}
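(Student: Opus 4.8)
The plan is to deduce this directly from Theorem~\ref{thm:Newton polygons}, which is the substantive input; the corollary itself is essentially a bookkeeping consequence. By definition the Betti number $b_n(X)=\dim_K\H^n(X/K)$ is simply the total $K$-dimension of the $n$-th rational crystalline cohomology group, regarded as an $F$-isocrystal. Since the underlying $K$-vector space of an $F$-isocrystal decomposes into its slope pieces, we have $b_n(X)=\sum_\lambda h^n_{\cris,\lambda}(X)$, the sum being finite because $\H^n(X/K)$ is finite dimensional (the slopes in fact lying in $[0,n]$ by the constraints~\eqref{eq:constraint1} and~\eqref{eq:constraint2}).

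First I would invoke Theorem~\ref{thm:Newton polygons}: for FM-equivalent smooth proper $k$-schemes of dimension $\leq 3$ it produces isomorphisms of $F$-isocrystals $\H^n(X/K)\cong\H^n(Y/K)$ for every $n$, and in particular equalities of slope numbers $h^n_{\cris,\lambda}(X)=h^n_{\cris,\lambda}(Y)$ for all $\lambda$. Summing these over $\lambda$ immediately gives $b_n(X)=b_n(Y)$. Even more directly, an isomorphism of $F$-isocrystals is in particular an isomorphism of the underlying $K$-vector spaces, so the isomorphism $\H^n(X/K)\cong\H^n(Y/K)$ alone forces the $K$-dimensions to agree.

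There is no genuine obstacle at this stage: all of the difficulty has already been absorbed into the proof of Theorem~\ref{thm:Newton polygons}, which pins down the crystalline isocrystals in dimension $\leq 3$ by combining the derived invariance of the $\TR$-isocrystals with Corollary~\ref{cor:Popa Schnell consequence} (controlling $\H^1$) and the Poincar\'e-duality and hard-Lefschetz constraints~\eqref{eq:constraint1}--\eqref{eq:constraint2}. The only point worth flagging is that $b_n$ is the \emph{rational} crystalline dimension rather than $\dim_k\H^n_{\dR}(X/k)$, so it is exactly the slope-decomposed quantity that Theorem~\ref{thm:Newton polygons} controls; the de Rham dimension, which can differ in the presence of torsion, is not directly addressed here.
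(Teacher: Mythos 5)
Your proof is correct and matches the paper's (implicit) argument: the corollary is stated as an immediate consequence of Theorem~\ref{thm:Newton polygons}, whose isomorphisms $\H^n(X/K)\cong\H^n(Y/K)$ of $F$-isocrystals force the $K$-dimensions, i.e.\ the Betti numbers, to agree. Your remark distinguishing $b_n$ from $\dim_k\H^n_{\dR}(X/k)$ is also consistent with the paper's own caveat preceding the corollary.
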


\begin{remark}\label{rem:isogeny is easy}
    Given a Fourier--Mukai equivalence $\Phi_P:\D^b(X)\to \D^b(Y)$, the crystalline
    Mukai vector $v(P)$ of $P$ gives rise to a correspondence
    $\H^*(X/K)\to\H^*(Y/K)$. The usual formalism shows that this correspondence
    is an isomorphism of $K$-vector spaces, and the results of Section \ref{sec:slopes and isogeny
    invariants} can be proven by an analysis of the
    K\"unneth components of $v(P)$. Thus, our use of topological constructions,
    while more intrinsic, is not strictly necessary to access the information
    contained in the Hodge--Witt cohomology groups up to isogeny. However,
    the topological constructions appear to be necessary in order to control
    the torsion in the Hodge--Witt cohomology groups. As mentioned above, remembering this information is crucial in order to access the Hodge numbers.
\end{remark}

\subsection{Domino numbers}\label{sec:domino numbers}

To control the infinitely generated $p$-torsion in the Hodge--Witt cohomology
groups, Illusie and Raynaud introduce in~\cite{illusie-raynaud} certain
structures called dominoes and domino numbers.
We review their definition and prove the domino numbers are derived invariants in low
dimensions. If $M$ is an $R$-module, we set
\begin{align*}
    V^{-\infty}Z^iM&=\left\{x\in M^i|dV^n(x)=0\mbox{ for all }n\geq 0\right\}\mbox{ and}\\
    F^{\infty}B^iM&=\left\{x\in M^i|x\in F^nd(M^{i-1})\mbox{ for some }n\geq 0\right\}.
\end{align*}

\begin{definition}
    A coherent $R$-module $M$ is a {\bf domino} if there exists an integer $i$
    such that $M^n=0$ for $n\neq i,i+1$, $V^{-\infty}Z^i=0$, and
    $F^{\infty}B^{i+1}=M^{i+1}$.
\end{definition}

For a further explication of this definition, we refer the reader to
\cite[D\'{e}finition 2.16]{illusie-raynaud} and the surrounding material as
well as~\cite[Section 2.5]{MR726420}. Given any $R$-module $M$, each differential
$M^i\to M^{i+1}$ of $M$ factors as
\begin{equation}\label{eq:dominofactorization}
    \begin{tikzcd}
      M^i\arrow{rrr}{d}\arrow{dr}&&&M^{i+1}\\
      &M^i/V^{-\infty}Z^i\arrow{r}&F^{\infty}B^{i+1}\arrow{ur}&
    \end{tikzcd}
\end{equation}
and, if $M$ is coherent, then the $R$-module 
\[
  \Dom^i(M)=[M^i/V^{-\infty}Z^i\to F^{\infty}B^{i+1}]
\] 
is a domino.
We sometimes refer to this as the domino associated to the differential $M^i\to M^{i+1}$ of $M$.
\begin{definition}\label{def:dimension of domino}
  If $D$ is a domino supported in degrees $i$ and $i+1$, then
  \begin{equation}
      T(D)=\dim_k(D^i/VD^i)
  \end{equation}
  is finite. We refer to it as the \textbf{dimension} of the domino $D$. If $M$
  is a coherent $R$-module, we let $T^i(M)=T(\Dom^i(M))$.
\end{definition}

Illusie and Raynaud define in \cite[Section 1.2.D]{illusie-raynaud} certain
simple one dimensional dominoes $U_{\sigma}$, depending on an integer
$\sigma$. By \cite[Proposition 1.2.15]{illusie-raynaud}, every domino is a
finite iterated extension of the $U_{\sigma}$. We will use the notation
\begin{align}
    \Dom^{i,j}(X)&\defeq\Dom^i(\H^j(W\Omega^{\bullet}_X)),\\
    T^{i,j}(X)   &\defeq T^i(\H^j(W\Omega^{\bullet}_X)).
\end{align}

\begin{example}
  If $X$ is a K3 surface over a perfect field, then the differential
  \begin{equation}\label{eq:a differential}
    d:\H^2(W\mathscr{O}_X)\to\H^2(W\Omega^1_X)
  \end{equation}
  is non-zero if and only if $X$ is supersingular, in which case it is a domino of dimension 1, isomorphic to $U_{\sigma_0}$ where $\sigma_0$ is the Artin invariant of $X$ (see \cite[III.7.2]{illusie-derham-witt}).
\end{example}

We recall that Ekedahl showed in \cite[Theorem IV.3.5]{MR765411} that $\Dom^{i,j}(X)$ and $\Dom^{d-i-2,d-j+2}(X)$ are naturally dual (in a certain sense); as a consequence we have the equality
\begin{equation}\label{eq:duality for dominoes}
    T^{i,j}=T^{d-i-2,d-j+2}
\end{equation}
of Domino numbers \cite[Corollary IV.3.5.1]{MR765411}.

\begin{remark}
   The domino associated to an $R$-module supported in two degrees whose
   differential is zero is zero. Thus, we have $T^{i,j}=0$ if $i\geq d$ or
   $j>d$. By~\eqref{eq:duality for dominoes}, this implies the vanishing of
   various other domino numbers which are not obviously zero. For instance, if
   $X$ is a surface, then we see that the only possible nontrivial domino
   number of $X$ is $T^{0,2}$, the dimension of the domino associated to the
   differential~\eqref{eq:a differential} in the slope spectral sequence.
\end{remark}

\begin{remark}\label{rem:Ekedahls devissage}
    Ekedahl studies in \cite{MR765411} a certain canonical filtration of a
    coherent $R$-module $M$, one piece of which is composed of the dominoes
    defined above (see also \cite{MR726420}). From this filtration Ekedahl
    shows how to partition the torsion of $M$ according to its behavior under
    $V$ and $F$: semisimple torsion, nilpotent torsion, and dominoes. For
    surfaces and threefolds, the possible degrees in which each of these types
    of torsion may appear are indicated in Figures \ref{fig:surfaceslope} and
    \ref{fig:threefoldslope}. We will study only the domino torsion in this
    document, although the semisimple and nilpotent torsion are undoubtedly
    interesting as well.
\end{remark}

Let $X$ be a smooth and proper $k$-scheme.
We now consider the complex $\TR_{\bullet}(X)$, which by Proposition
\ref{prop:TR is coherent} is a coherent $R$-module. We define
\begin{align}
    \Dom^{\cyc}_{i}(X)&\defeq \Dom^i(\TR_{\bullet}(X)),\\
    T^{\cyc}_{i}(X)   &\defeq T^i(\TR_{\bullet}(X)).
\end{align}
By construction, the $T^{\cyc}_i$ are derived invariants of $X$, and we refer to them as the \textbf{derived domino numbers} of $X$. We will use the spectral sequence~\eqref{eq:Hesselholt SS} to relate them to the usual domino numbers $T^{i,j}$ of $X$. We note the following lemma.

\begin{lemma}\label{lem:additivity of domino numbers}
  If $0\to L\to M\to N\to 0$ is an exact sequence of coherent $R$-modules, then for each $i$ we have $T^i(M)=T^i(L)+T^i(N)$.
\end{lemma}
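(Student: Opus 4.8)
The plan is to deduce additivity from two ingredients: that the domino number $T$ is a length function on dominoes, and that the functor $\Dom^i(-)$ is exact up to a finitely generated discrepancy. Recall that by \cite[Proposition~1.2.15]{illusie-raynaud} every domino is a finite iterated extension of the one-dimensional dominoes $U_\sigma$, each with $T(U_\sigma)=1$, so $T$ is additive on short exact sequences of dominoes. Since morphisms of $R$-modules commute with $d$, $F$, and $V$, the assignment $M\mapsto\Dom^i(M)=[M^i/V^{-\infty}Z^iM\to F^\infty B^{i+1}M]$ is functorial, and the given sequence induces a complex of dominoes
\[
  \Dom^i(L)\xrightarrow{\iota}\Dom^i(M)\xrightarrow{\pi}\Dom^i(N).
\]

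First I would record the partial exactness of this complex. The assignment $M\mapsto V^{-\infty}Z^iM$ is left exact, since $V^{-\infty}Z^iL=L^i\cap V^{-\infty}Z^iM$; this forces $\iota$ to be a monomorphism. Dually $M\mapsto F^\infty B^{i+1}M$ is right exact, as every $F^nd$-expression in $N$ lifts to one in $M$, which forces $\pi$ to be an epimorphism. Applying the snake lemma separately in degree $i$ (to the quotient functor $M^i\mapsto M^i/V^{-\infty}Z^iM$) and in degree $i+1$ (to the subfunctor $M^{i+1}\mapsto F^\infty B^{i+1}M$) then identifies the homology of this complex at the middle term with a coherent $R$-module $H$ supported in degrees $i,i+1$, with $H^i=V^{-\infty}Z^iN/\overline{V^{-\infty}Z^iM}$ and $H^{i+1}=(F^\infty B^{i+1}M\cap L^{i+1})/F^\infty B^{i+1}L$; here $H$ is coherent because the category of coherent $R$-modules is closed under subquotients, as used in the proof of Proposition~\ref{prop:TR is coherent}. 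Once $H$ is shown to have vanishing domino number, the complex becomes a short exact sequence of dominoes, and additivity of $T$ on dominoes yields $T^i(M)=T^i(L)+T^i(N)$.

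It therefore remains to show $\Dom^i(H)=0$, and this is where the main work lies. I would prove the sharper statement that $H$ is finitely generated over $W$: then Lemma~\ref{lem:another little lemma}, applied to the coherent $R$-module $H$, forces its differential $H^i\to H^{i+1}$ to vanish, whence $F^\infty B^{i+1}H=0$ and $\Dom^i(H)=0$. The finite generation of $H$ is the crux. The idea is that the infinitely generated parts of $V^{-\infty}Z^iN$ and $F^\infty B^{i+1}M$ arise precisely from the domino structure and lift along $M\twoheadrightarrow N$ (respectively descend along $L\hookrightarrow M$); the obstruction is controlled by the finitely generated cohomology groups $\H^\ast(L),\H^\ast(M),\H^\ast(N)$ through the long exact cohomology sequence, together with the completeness of each term in the $(V+dV)$-adic topology guaranteed by coherence. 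Concretely, $H^i$ measures the failure to solve $dV^n l=dV^n x$ $(n\ge0)$ in $L^i$ for a lift $x\in M^i$ of a class in $V^{-\infty}Z^iN$; since $dV^nx\to0$ and $V(dV^nx)=p\,dV^{n+1}x$, the relevant data is finitely generated modulo the image of $d$. By Ekedahl's duality recorded in~\eqref{eq:duality for dominoes}, which interchanges the roles of $H^i$ and $H^{i+1}$ and of $F$ and $V$, it suffices to bound either degree.

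The main obstacle, as indicated, is establishing the finite generation of the homology defect $H$; granting this, the rest of the argument is formal. Should the direct estimate prove delicate, an alternative is to invoke the additivity of domino numbers on short exact sequences of coherent $R$-modules directly from the structure theory of Illusie--Raynaud and Ekedahl, in which the dominoes are the finite length objects of an abelian subquotient category with simple objects the $U_\sigma$; in that language $\Dom^i$ is exact, $T^i$ is a length function, and the lemma is immediate.
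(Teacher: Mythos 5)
The paper's own ``proof'' of this lemma is a one-line citation to Joshi \cite[Lemma~2.5]{MR3394128}, so the only question is whether your argument stands on its own; it does not, because there is a genuine gap at the pivotal step. Your preliminary observations are fine: $\iota$ is a monomorphism, $\pi$ is an epimorphism, and the middle homology $H$ is a coherent $R$-module concentrated in degrees $i,i+1$ with the terms you describe (coherent because it is built from kernels and cokernels of maps between coherent modules; the closure properties in the paper are stated for kernels, cokernels, and extensions rather than arbitrary subquotients, but that is all you actually need). The gap is the sentence ``once $H$ is shown to have vanishing domino number, the complex becomes a short exact sequence of dominoes.'' This is a non sequitur: $T(\Dom^i(H))=0$, and even finite generation of $H$ over $W$, forces only the differential of $H$ to vanish (Lemma~\ref{lem:another little lemma}); it does not force $H=0$. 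If $H\neq 0$, you are left with the filtration $\Dom^i(L)\subseteq K\subseteq \Dom^i(M)$, $K=\ker\pi$, whose subquotients $H$ and $\Dom^i(N)$ are not all dominoes, and concluding $T^i(M)=T^i(L)+T^i(N)$ from such a filtration requires additivity of domino numbers on short exact sequences of two-term coherent $R$-modules --- that is, the very lemma being proved. The circularity can be broken by hand, but doing so shows that your stated target is the wrong one. Since $V$ is injective on the degree-$i$ term of any domino (if $Vx=0$ then $dx=FdVx=0$ and $dV^nx=0$ for all $n$, so $x\in V^{-\infty}Z^i=0$), hence also on $K^i$, two applications of the snake lemma for $V$ to $0\to K\to\Dom^i(M)\to\Dom^i(N)\to 0$ and $0\to\Dom^i(L)\to K\to H\to 0$ yield
\[
  T^i(M)=T^i(L)+T^i(N)+\dim_k\bigl(H^i/VH^i\bigr)-\dim_k\ker\bigl(V\mid H^i\bigr).
\]
So what must actually be shown is that $V$ has equal kernel and cokernel dimensions on $H^i$. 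Finite generation of $H^i$ is \emph{not} sufficient for this: the equality holds when $H^i$ has finite length over $W$ (both sides are $k$-vector spaces, since $p=FV$, of the same length), but it fails, for instance, for $H^i=W$ with $F=\sigma$ and $V=p\sigma^{-1}$, where the kernel is $0$ and the cokernel is $k$.

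Even setting this aside, the step you yourself identify as the crux --- controlling $H$ --- is not proved: the discussion of solving $dV^nl=dV^nx$ is heuristic, and the appeal to Ekedahl's duality~\eqref{eq:duality for dominoes} is misapplied, since that statement concerns the dominoes of the slope spectral sequence of a smooth proper variety, not the defect of an abstract exact sequence of coherent $R$-modules, so it cannot be used here to trade degree $i$ for degree $i+1$. Finally, your closing ``alternative'' is in substance what the paper does (invoke the Illusie--Raynaud--Ekedahl structure theory via the citation to Joshi), but as written it is an assertion rather than an argument; note also that the exactness of $\Dom^i$ it invokes cannot be termwise exactness --- your own defect $H$ is precisely the obstruction to that --- it must be exactness into the abelian category of dominoes, in which kernels and cokernels are not computed degreewise. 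In summary, the skeleton (reduce to dominoes, control a defect) is reasonable, but the two load-bearing steps --- passing from ``$H$ is controlled'' to additivity, and the control of $H$ itself --- are respectively invalid as stated and missing.
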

\begin{proof}
    See \cite[Lemma 2.5]{MR3394128}.
\end{proof}

\begin{proposition}\label{prop:top domino numbers is invariant}
    If $X$ is a smooth proper $k$-scheme of dimension $d$, then $T^{0,d}(X)=T^{\cyc}_{-d}(X)$. In particular, $T^{0,d}(X)$ is a derived invariant.
\end{proposition}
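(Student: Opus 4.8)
The plan is to relate both domino numbers to the $d$-th cohomological row $\E_\infty^{\bullet,d}$ of the descent spectral sequence~\eqref{eq:Hesselholt SS}, using the additivity of domino numbers (Lemma~\ref{lem:additivity of domino numbers}) together with the fact, from Proposition~\ref{prop:TR is coherent} and Lemma~\ref{lem:compatible}, that every $R$-module in sight is coherent. Recall that $T^{\cyc}_{-d}(X)=T^{-d}(\TR_\bullet(X))$ is the dimension of the domino of the differential $\TR_{-d}(X)\to\TR_{-d+1}(X)$, while $T^{0,d}(X)=T^0(\H^d(W\Omega^\bullet_X))$ is the dimension of the domino of the de Rham--Witt differential $\H^d(W\mathscr{O}_X)\to\H^d(W\Omega^1_X)$.

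First I would compute $T^{-d}(\TR_\bullet(X))$ using the filtration $0=F^{d+1}_\bullet\subseteq\cdots\subseteq F^0_\bullet=\TR_\bullet(X)$ by coherent $R$-submodules supplied by Lemma~\ref{lem:compatible}(iv). Applying Lemma~\ref{lem:additivity of domino numbers} inductively along the successive quotients gives $T^{-d}(\TR_\bullet(X))=\sum_{t=0}^{d}T^{-d}(F^t_\bullet/F^{t+1}_\bullet)$. In homological degree $-d$ the piece $F^t_\bullet/F^{t+1}_\bullet$ is $\E_\infty^{-d+t,t}$, which vanishes for $t<d$ (negative form degree) and for $t>d$ (no forms above degree $d=\dim X$); so only $t=d$ survives. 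There the two relevant terms are $(F^d_\bullet/F^{d+1}_\bullet)_{-d}=\E_\infty^{0,d}$ and $(F^d_\bullet/F^{d+1}_\bullet)_{-d+1}=\E_\infty^{1,d}$, with connecting map the induced de Rham--Witt differential by Lemma~\ref{lem:compatible}(ii)--(iii). Identifying this two-term complex with the corresponding piece of the $R$-module $\E_\infty^{\bullet,d}$ shows $T^{-d}(F^d_\bullet/F^{d+1}_\bullet)=T^0(\E_\infty^{\bullet,d})$, whence $T^{\cyc}_{-d}(X)=T^0(\E_\infty^{\bullet,d})$.

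Next I would identify $T^0(\E_\infty^{\bullet,d})$ with $T^{0,d}(X)$. Since $t=d$ is the top cohomological row, every differential out of $\E_r^{\bullet,d}$ lands in a row $t>d$ and hence vanishes; thus $\E_\infty^{\bullet,d}$ is the quotient of $\E_2^{\bullet,d}=\H^d(W\Omega^\bullet_X)$ by the $R$-submodule $N$ of images of incoming differentials, which are maps of $R$-modules by Lemma~\ref{lem:compatible}(iii). Additivity applied to $0\to N\to\H^d(W\Omega^\bullet_X)\to\E_\infty^{\bullet,d}\to 0$ gives $T^0(\H^d(W\Omega^\bullet_X))=T^0(N)+T^0(\E_\infty^{\bullet,d})$. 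An incoming differential to the leftmost column $(0,d)$ would originate in column $1-r<0$, so there is none; hence $\E_\infty^{0,d}=\E_2^{0,d}=\H^d(W\mathscr{O}_X)$ and $N^0=0$. A domino whose degree-$0$ source term vanishes is the zero domino, so $T^0(N)=0$, giving $T^{0,d}(X)=T^0(\E_\infty^{\bullet,d})=T^{\cyc}_{-d}(X)$. Derived invariance of $T^{0,d}(X)$ then follows because $\TR_\bullet(X)$, and with it $T^{\cyc}_{-d}(X)$, is a derived invariant.

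I expect the only real subtlety to be bookkeeping rather than anything conceptual: one must check that each additivity step takes place among coherent $R$-modules carrying the correct (de Rham--Witt) differentials, so that the dominoes being compared genuinely coincide, and one must keep the shift conventions relating the form degree $s$ and the homological degree $s-t$ straight. These compatibilities are exactly what Lemma~\ref{lem:compatible} provides, and since the argument uses only the top-row and leftmost-column boundary vanishing, it requires no hypothesis on $d$.
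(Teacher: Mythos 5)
Your proof is correct and takes essentially the same route as the paper's: both halves rest on additivity of domino numbers (Lemma~\ref{lem:additivity of domino numbers}), coherence of the rows (Proposition~\ref{prop:TR is coherent}), and the filtration of Lemma~\ref{lem:compatible}(iv), with the identification $T^{0,d}(X)=T^0(\E_\infty^{\bullet,d})=T^{\cyc}_{-d}(X)$ as the common pivot. The only cosmetic difference is that you absorb all incoming differentials to the row $t=d$ into a single coherent submodule $N$ with $N^0=0$, whereas the paper argues page by page, first quotienting by $\im(d_2)$ (whose degree-$0$ term vanishes for the same degree reasons) and then observing that the first two columns of $\E_3^{\bullet,d}$ see no further differentials.
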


\begin{proof}
    We interpret the rows of the pages of the descent spectral sequence for $X$
    as $R$-modules by Lemma~\ref{lem:compatible}. We have an exact sequence
    \[
      \H^{d-2}(W\Omega^{\bullet}_X)[-1]\xrightarrow{d_2}\H^d(W\Omega^{\bullet}_X)\to\E^{\bullet,d}_3\to
      0.
    \]
    The image of $d_2$ is a coherent $R$-submodule
    $\im(d_2)\subseteq\H^d(W\Omega^{\bullet}_X$), and $\Dom^{0}(\im(d_2))=0$ for
    degree reasons. Hence,
    $T^{0,d}(X)=T^0(\H^d(W\Omega^{\bullet}_X))=T^0(\E^{\bullet,d}_3)$. The
    first two terms of $\E^{\bullet,d}_3$ do not see any further differentials,
    and so
    \begin{equation}\label{eq:placeholder}
      T^{0,d}(X)=T^0(\E^{\bullet,d}_3)=T^0(\E^{\bullet,d}_{\infty}).
    \end{equation}
    Consider the filtration $F^i_{\bullet}$ of $\TR_{\bullet}(X)$ from
    Lemma~\ref{lem:compatible}. It follows inductively from Lemma
    \ref{lem:additivity of domino numbers} and the isomorphisms of
    Lemma~\ref{lem:compatible}(iv) that $T^{-d}(\TR_{\bullet}(X)/F^{i}_{\bullet})=0$ for all $i$. Combined with~\eqref{eq:placeholder}, we obtain
    \[
      T^{0,d}(X)=T^0(\E^{\bullet,d}_{\infty})=T^{-d}(F^d_{\bullet})=T^{-d}(\TR_{\bullet}(X))=T^{\cyc}_{-d}(X),
    \]
    as desired.
\end{proof}

\begin{definition}
    Let $X$ be a smooth proper $k$-scheme.
    We say that the descent spectral sequence for $X$ is \textbf{degenerate at
    the level of dominoes} if for each $i,j$ we have
    $T^{i,j}(X)=T^i(\E^{\bullet,j}_r)$ for all $r\geq 2$, where
    $\E_r^{\bullet,j}$ denotes the coherent $R$-module from
    Lemma~\ref{lem:compatible} arising in the descent spectral sequence.
\end{definition}

In low dimensions, this condition is automatic.

\begin{lemma}\label{lem:degeneration for threefolds}
    Let $X$ be a smooth proper $k$-scheme.
    If $X$ has dimension $d\leq 3$, then the descent spectral sequence for $X$ is
    degenerate at the level of dominoes.
\end{lemma}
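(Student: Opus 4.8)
The plan is to prove that the domino numbers $T^i(\E_r^{\bullet,j})$ of the rows of the descent spectral sequence are independent of the page index $r\geq 2$; since $T^{i,j}(X)=T^i(\E_2^{\bullet,j})$ by definition, this is exactly the asserted degeneration at the level of dominoes. When $d\leq 2$ there is nothing to prove, as the descent spectral sequence degenerates at $\E_2$ by Lemma~\ref{lem:E2degeneration}, so $\E_r^{\bullet,j}=\E_2^{\bullet,j}$ for all $r\geq 2$. For $d=3$ I would compare consecutive pages using additivity of domino numbers in short exact sequences of coherent $R$-modules (Lemma~\ref{lem:additivity of domino numbers}), recalling that each row $\E_r^{\bullet,j}$ is a coherent $R$-module and each differential $d_r$ is a map of $R$-modules (Lemma~\ref{lem:compatible} and Proposition~\ref{prop:TR is coherent}).

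First I would record the effect of a single page turn on a fixed row $j$. Let $\mathrm{out}$ denote the coimage $\E_r^{\bullet,j}/Z$ of the outgoing differential $d_r\colon\E_r^{\bullet,j}\to\E_r^{\bullet,j+r}[r-1]$, where $Z$ is its kernel, and let $\mathrm{in}$ denote the image inside $\E_r^{\bullet,j}$ of the incoming differential $d_r\colon\E_r^{\bullet,j-r}\to\E_r^{\bullet,j}[r-1]$, so that $\mathrm{in}\subseteq Z$. The short exact sequences $0\to Z\to\E_r^{\bullet,j}\to\mathrm{out}\to 0$ and $0\to\mathrm{in}\to Z\to\E_{r+1}^{\bullet,j}\to 0$ of coherent $R$-modules, together with additivity, give $T^i(\E_r^{\bullet,j})=T^i(\E_{r+1}^{\bullet,j})+T^i(\mathrm{in})+T^i(\mathrm{out})$ for every $i$. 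It therefore suffices to show that $\mathrm{in}$ and $\mathrm{out}$ have vanishing domino numbers in all degrees.

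The crux is a degree count combined with the classical low-row vanishing. A nonzero $d_r$ has bidegree $(r-1,r)$ and the rows $\E_r^{\bullet,t}$ vanish outside $0\leq t\leq d$, so the source of any nonzero differential lies in a row with $t+r\leq d$, forcing $t\leq d-r\leq d-2\leq 1$ when $d\leq 3$. By \cite[Section~3.1]{MR726420} one has $T^{i,j}(X)=0$ for all $i$ whenever $j\leq 1$, so the rows $\E_2^{\bullet,0}$ and $\E_2^{\bullet,1}$ have all domino numbers zero. Each page $\E_r^{\bullet,j}$ is a subquotient of $\E_2^{\bullet,j}$, and since domino numbers are nonnegative and additive, any subquotient of a module with all domino numbers zero again has all domino numbers zero, a property moreover unchanged by the degree shift $[r-1]$. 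Now $\mathrm{out}$ is a quotient of $\E_r^{\bullet,j}$ and is nonzero only when $j\leq d-r\leq 1$, while $\mathrm{in}$ is, up to the shift $[r-1]$, a quotient of the source row $\E_r^{\bullet,j-r}$ and is nonzero only when $j-r\leq 1$; in either case the relevant row has index $\leq 1$, so both $\mathrm{in}$ and $\mathrm{out}$ have vanishing domino numbers. Feeding this into the additivity relation yields $T^i(\E_r^{\bullet,j})=T^i(\E_{r+1}^{\bullet,j})$ for all $i,j$ and all $r\geq 2$, and the lemma follows by induction.

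I expect the only genuine subtlety to be the bookkeeping that forces every nonzero differential to emanate from a row with $t\leq 1$, together with the observation that vanishing of \emph{all} domino numbers is stable under the degree shifts $[r-1]$ that appear in the differentials. Once these are in place, the input $T^{i,j}(X)=0$ for $j\leq 1$ and additivity do all the remaining work, and no ingredient beyond the coherence of the pages and the results already assembled is required.
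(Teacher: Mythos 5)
Your proof is correct and is essentially the paper's argument, spelled out in full: the paper's own proof likewise disposes of $d\leq 2$ by $\E_2$-degeneration and, for $d=3$, combines the additivity of domino numbers in short exact sequences of coherent $R$-modules (Lemma~\ref{lem:additivity of domino numbers}) with the fact that all differentials emanate from rows $j\leq 1$, where $T^{i,j}(X)=0$ (the content of Figure~\ref{fig:threefoldslope}). Your page-by-page bookkeeping with $\mathrm{in}$, $\mathrm{out}$, and the shift-invariance of vanishing domino numbers is exactly the expansion of the paper's ``follows immediately'' step.
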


\begin{proof}
  If $d\leq 2$, the descent spectral sequence is degenerate. Suppose $d=3$. The
  only possibly nonzero dominoes of $X$ are depicted in Figure \ref{fig:threefoldslope}. The result follows immediately from Lemma \ref{lem:additivity of domino numbers}. 
\end{proof}

\begin{proposition}\label{prop:degenerate at the level of dominos}
    Let $X$ be a smooth proper $k$-scheme. If the descent spectral sequence for $X$ is degenerate at the level of
    dominoes, then for each $i$ we have
    \[
      T_i^{\cyc}(X)=\sum_{j\geq 0} T^{i+j,j}(X).
    \]
\end{proposition}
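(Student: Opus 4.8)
The plan is to compute the derived domino number $T^{\cyc}_i(X) = T^i(\TR_\bullet(X))$ by decomposing $\TR_\bullet(X)$ through the filtration arising from the descent spectral sequence and then applying additivity of domino numbers. By Lemma~\ref{lem:compatible}(iv) the $R$-module $\TR_\bullet(X)$ carries a finite filtration $0 = F^{d+1}_\bullet \subseteq \cdots \subseteq F^0_\bullet = \TR_\bullet(X)$ by $R$-submodules, all of which are coherent by (the proof of) Proposition~\ref{prop:TR is coherent}, with successive quotients $\gr^t_\bullet \defeq F^t_\bullet/F^{t+1}_\bullet$ identified with shifts of the rows $\E_\infty^{\bullet,t}$ of the $\E_\infty$-page. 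Applying Lemma~\ref{lem:additivity of domino numbers} inductively along this finite filtration gives, for each fixed $i$,
\[
  T^{\cyc}_i(X) = T^i(\TR_\bullet(X)) = \sum_{t\geq 0} T^i(\gr^t_\bullet).
\]

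It then remains to identify each summand, and this is where the hypothesis enters. Working directly from the formula $F^t_i = \im\!\left(\pi_i\F^{i+t}\TR(X)\to\TR_i(X)\right)$ of Lemma~\ref{lem:compatible}(iv), the degree-$i$ component of $\gr^t_\bullet$ is the filtration-level-$(i+t)$ graded piece of $\TR_i(X)$, namely $\E_\infty^{i+t,t}$; hence the differential of $\gr^t_\bullet$ in degree $i$ is $\E_\infty^{i+t,t}\to\E_\infty^{i+t+1,t}$, which is the degree-$(i+t)$ differential of the $R$-module $\E_\infty^{\bullet,t}$. Since the dimension of a domino is unaffected by an overall shift, this yields $T^i(\gr^t_\bullet) = T^{i+t}(\E_\infty^{\bullet,t})$. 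Now, because the descent spectral sequence degenerates at a finite page, $\E_\infty^{\bullet,t} = \E_r^{\bullet,t}$ for $r\gg0$, so degeneracy at the level of dominoes gives $T^{i+t}(\E_\infty^{\bullet,t}) = T^{i+t,t}(X)$. Summing over $t$ and renaming $t=j$ produces the desired formula $T^{\cyc}_i(X) = \sum_{j\geq 0} T^{i+j,j}(X)$.

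Most of this argument is formal; the step carrying the real content is the appeal to degeneracy at the level of dominoes, which is exactly what licenses replacing the $\E_\infty$-row domino $T^{i+t}(\E_\infty^{\bullet,t})$ by the Hodge--Witt domino number $T^{i+t,t}(X)$ read off the $\E_2$-page. Without this hypothesis, higher differentials in the descent spectral sequence could change the dominoes from page to page, and the sum would instead compute the dominoes of $\E_\infty$ rather than those of $\H^\bullet(W\Omega^\bullet_X)$. The only genuine hazard is keeping the degree shift straight; as a consistency check, taking $i=-d$ collapses the right-hand side to the single term $T^{0,d}(X)$, all other summands vanishing since $T^{i,j}=0$ unless $0\leq i\leq d-1$, thereby recovering Proposition~\ref{prop:top domino numbers is invariant}.
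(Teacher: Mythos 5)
Your proof is correct and takes essentially the same route as the paper: both decompose $\TR_\bullet(X)$ along the filtration of Lemma~\ref{lem:compatible}(iv), apply additivity of domino numbers (Lemma~\ref{lem:additivity of domino numbers}) across the finite filtration, and invoke degeneracy at the level of dominoes to replace $T^{i+j}(\E_\infty^{\bullet,j})$ by $T^{i+j,j}(X)$. The only difference is presentational: you make explicit the coherence of the filtration pieces, the degree-shift bookkeeping, and the fact that $\E_\infty$ agrees with $\E_r$ for large $r$, all of which the paper's two-line proof leaves implicit.
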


\begin{proof}
    We have
    \[
      T^{i}(F^j_{\bullet}/F^{j+1}_{\bullet})=T^i(\E^{\bullet,j}_{\infty}[j])=T^{i+j}(\E^{\bullet,j}_{\infty})=T^{i+j,j}(X).
    \]
    The result follows from Lemma \ref{lem:additivity of domino numbers}.
\end{proof}

\begin{theorem}\label{thm:domino numbers}
  If $X$ and $Y$ are FM-equivalent smooth proper $k$-schemes of dimension $\leq 3$, then for all $i,j$ we have $T^{i,j}(X)=T^{i,j}(Y)$.
\end{theorem}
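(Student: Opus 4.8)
The plan is to exploit the fact that the derived domino numbers $T^{\cyc}_i(X)$ are, by construction, derived invariants, and to extract the individual classical domino numbers $T^{i,j}(X)$ from them, using the degeneration supplied by Lemma~\ref{lem:degeneration for threefolds} together with Ekedahl's duality~\eqref{eq:duality for dominoes}. First I would note that $X$ and $Y$ have a common dimension $d\leq 3$, since the dimension of a smooth proper variety is recoverable from the Serre functor and hence is a derived invariant. By Lemma~\ref{lem:degeneration for threefolds} the descent spectral sequence for each of $X$ and $Y$ is degenerate at the level of dominoes, so Proposition~\ref{prop:degenerate at the level of dominos} applies and gives
\[
  T^{\cyc}_i(X)=\sum_{j\geq 0}T^{i+j,j}(X),\qquad T^{\cyc}_i(Y)=\sum_{j\geq 0}T^{i+j,j}(Y)
\]
for every $i$. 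Since the $T^{\cyc}_i$ are derived invariants, the two sums agree, and it remains only to disentangle their individual terms.

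The key simplification is that in dimension $\leq 3$ almost all domino numbers vanish. Combining $T^{i,j}=0$ for $i\geq d$ or $j>d$ with the vanishing $T^{i,j}=0$ for $j\leq 1$ or $i\geq d-1$ (already invoked in the proof of Proposition~\ref{prop:finiteness for some TR}), a nonzero $T^{i,j}$ forces $0\leq i\leq d-2$ and $2\leq j\leq d$. For $d=3$ this leaves exactly the four values $(i,j)\in\{(0,2),(0,3),(1,2),(1,3)\}$, which are precisely the entries of the red box of Figure~\ref{fig:threefoldslope}; for $d=2$ only $(0,2)$ survives, and for $d\leq 1$ none do. Since each $T^{a,b}$ contributes to $T^{\cyc}_{a-b}$, for $d=3$ the only nonzero sums are
\[
  T^{\cyc}_{-3}(X)=T^{0,3}(X),\qquad T^{\cyc}_{-2}(X)=T^{0,2}(X)+T^{1,3}(X),\qquad T^{\cyc}_{-1}(X)=T^{1,2}(X),
\]
and likewise for $Y$.

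Finally I would feed in Ekedahl's duality~\eqref{eq:duality for dominoes}, which for $d=3$ reads $T^{0,2}=T^{1,3}$ and $T^{0,3}=T^{1,2}$. This collapses the middle sum to $T^{\cyc}_{-2}=2\,T^{0,2}$, so the derived invariance of $T^{\cyc}_{-2}$ gives $T^{0,2}(X)=T^{0,2}(Y)$ and hence $T^{1,3}(X)=T^{1,3}(Y)$; the derived invariance of $T^{\cyc}_{-3}$ gives $T^{0,3}(X)=T^{0,3}(Y)$ and hence $T^{1,2}(X)=T^{1,2}(Y)$. All remaining domino numbers vanish for both $X$ and $Y$, completing the proof; the cases $d\leq 2$ are immediate since then $T^{\cyc}_{-2}=T^{0,2}$ is the only relevant sum. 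The one genuinely delicate step is the middle degree, where two a priori independent dominoes $T^{0,2}$ and $T^{1,3}$ both feed into the single invariant $T^{\cyc}_{-2}$: without the duality~\eqref{eq:duality for dominoes} forcing them equal, the derived invariance of $T^{\cyc}_{-2}$ alone would not separate them.
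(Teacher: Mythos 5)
Your proof is correct and follows essentially the same route as the paper's: derived invariance of the $T^{\cyc}_i$, degeneration at the level of dominoes (Lemma~\ref{lem:degeneration for threefolds} plus Proposition~\ref{prop:degenerate at the level of dominos}) to get $T^{\cyc}_{-3}=T^{0,3}$ and $T^{\cyc}_{-2}=T^{0,2}+T^{1,3}$, and Ekedahl's duality~\eqref{eq:duality for dominoes} to split the middle sum. The only cosmetic difference is that the paper handles $T^{0,d}$ via Proposition~\ref{prop:top domino numbers is invariant} while you recover it as the sole surviving term in the sum for $T^{\cyc}_{-d}$; both rest on the same machinery.
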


\begin{proof}
  If $X$ and $Y$ are surfaces, the result follows from Proposition
  \ref{prop:top domino numbers is invariant}. The only possibly nonzero domino
  numbers of a threefold are $T^{0,2},T^{0,3},T^{1,2}$, and $T^{1,3}$. By
  Proposition \ref{prop:top domino numbers is invariant}
  $T^{0,3}=T^{\cyc}_{-3}$ is a derived invariant. By duality~\eqref{eq:duality
  for dominoes}, $T^{1,2}=T^{0,3}$ is also derived invariant. By Lemma
  \ref{lem:degeneration for threefolds}, the descent spectral sequence of a
  threefold is degenerate at the level of dominoes, so by Proposition
  \ref{prop:degenerate at the level of dominos} we have that
  $T^{\cyc}_{-2}=T^{0,2}+T^{1,3}$ is derived invariant. But, by duality again,
  $T^{0,2}=T^{1,3}$, and hence both terms are themselves derived invariant.
\end{proof}

\subsection{Hodge--Witt numbers}\label{sec:Hodge-Witt numbers}

We recall certain $p$-adic invariants introduced by Ekedahl in Section IV of~\cite{ekedahl}.
We refer the reader also to Crew's article~\cite{MR806843} and
Illusie's article~\cite{MR726420}. Let $X$ be a smooth and proper $k$-scheme. We define the {\bf Hodge--Newton numbers} of $X$ by
\begin{equation}
    m^{i,j}=\sum_{\lambda\in[i,i+1)}(i+1-\lambda)h^{i+j}_{\cris,\lambda}+\sum_{\lambda\in[i-1,i)}(\lambda-i+1)h^{i+j}_{\cris,\lambda}.
\end{equation}
One can show that the $m^{i,j}$ are in fact non-negative integers, and by \cite[Lemma VI.3.1]{ekedahl} they satisfy the relations
\begin{align}
    m^{i,j}&=m^{j,i},\\
    m^{i,j}&=m^{d-i,d-j}.
\end{align}
The {\bf Hodge--Witt numbers} of $X$ are defined as
\begin{equation}
  h^{i,j}_W=m^{i,j}+T^{i,j}- 2T^{i-1,j+1} + T^{i-2,j+2}.
\end{equation}
By \cite[Proposition VI.3.2, VI.3.3]{ekedahl} these satisfy
\begin{equation}
    h^{i,j}_W=h^{d-i,d-j}_W,
\end{equation}
and, if $X$ has dimension $d\leq 3$, one has
\begin{equation}
    h^{i,j}_W=h^{j,i}_W.
\end{equation}
By \cite[Theorem IV.3.2, IV.3.3]{ekedahl}, the Hodge--Witt numbers are related
to the Hodge numbers $h^{i,j}=h^j(X,\Omega^i_X)$ by the inequalities
\begin{equation}\label{eq:inequality for HW and hodge}
    h^{i,j}_W\leq h^{i,j}
\end{equation}
and by {\bf Crew's formula}, which states that
\begin{equation}\label{eq:Crew's formula}
    \sum_j(-1)^jh^{i,j}_W=\sum_{j}(-1)^jh^{i,j}=\chi(\Omega^i_X).
\end{equation}
Finally, we have
\begin{equation}\label{eq:HW numbers and Betti numbers}
  b_n=\sum_{i+j=n}h^{i,j}_W
\end{equation}
for each $n$.

As an immediate consequence of Theorem \ref{thm:Newton polygons} and Theorem
\ref{thm:domino numbers}, we have the following result.

\begin{theorem}\label{thm:hodge-witt numbers}
  If $X$ and $Y$ are FM-equivalent smooth proper $k$-schemes of dimension
  $\leq 3$, then for all $i,j$ we have $h^{i,j}_W(X)=h^{i,j}_W(Y)$.
\end{theorem}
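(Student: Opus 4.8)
The plan is to observe that the Hodge--Witt number $h^{i,j}_W$ is, by its very definition, assembled entirely out of quantities already shown to be derived invariants in dimension $\leq 3$. Recall that
\[
  h^{i,j}_W=m^{i,j}+T^{i,j}-2T^{i-1,j+1}+T^{i-2,j+2},
\]
so it suffices to check that each of the two types of building block---the Hodge--Newton numbers $m^{i,j}$ and the domino numbers $T^{i,j}$---is a derived invariant.

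First I would treat the Hodge--Newton numbers. By definition,
\[
  m^{i,j}=\sum_{\lambda\in[i,i+1)}(i+1-\lambda)\,h^{i+j}_{\cris,\lambda}+\sum_{\lambda\in[i-1,i)}(\lambda-i+1)\,h^{i+j}_{\cris,\lambda},
\]
which is a fixed $\QQ$-linear expression in the crystalline slope numbers $h^{i+j}_{\cris,\lambda}$. Theorem~\ref{thm:Newton polygons} asserts that for FM-equivalent smooth proper $k$-schemes of dimension $\leq 3$ one has $h^{n}_{\cris,\lambda}(X)=h^{n}_{\cris,\lambda}(Y)$ for all $n$ and all $\lambda$. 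Substituting, we obtain $m^{i,j}(X)=m^{i,j}(Y)$ for all $i,j$.

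Next I would invoke Theorem~\ref{thm:domino numbers}, which gives $T^{i,j}(X)=T^{i,j}(Y)$ for all $i,j$ under the same hypotheses; in particular this covers the shifted domino numbers $T^{i-1,j+1}$ and $T^{i-2,j+2}$ appearing in the formula. Combining the two invariance statements and substituting term by term into the defining expression for $h^{i,j}_W$ yields $h^{i,j}_W(X)=h^{i,j}_W(Y)$ for all $i,j$, as desired.

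There is no serious obstacle here: the content of the result lies entirely in the two cited theorems, and the present argument is just the bookkeeping needed to see that the expression defining $h^{i,j}_W$ involves no further data. The one point worth flagging is that the definition of $m^{i,j}$ uses only the crystalline slope numbers and not any finer isocrystal structure, so that the invariance supplied by Theorem~\ref{thm:Newton polygons} is exactly what is needed.
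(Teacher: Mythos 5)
Your proposal is correct and is exactly the paper's argument: the paper states Theorem~\ref{thm:hodge-witt numbers} as an immediate consequence of Theorem~\ref{thm:Newton polygons} (giving invariance of the crystalline slope numbers, hence of the Hodge--Newton numbers $m^{i,j}$) and Theorem~\ref{thm:domino numbers} (giving invariance of the domino numbers $T^{i,j}$), which is precisely the term-by-term substitution you carry out.
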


\subsection{Hodge numbers}\label{sec:Hodge numbers}

Let us turn to the question of whether the Hodge numbers $h^{i,j}$ are derived
invariants of smooth proper $k$-schemes. The answer to this is known to be yes
up to dimension $3$ in characteristic $0$ by~\cite{popa-schnell}. For curves
(in any characteristic) it is an
easy consequence of the Hochschild--Kostant--Rosenberg (HKR) theorem. For
surfaces in characteristic $0$, it follows from HKR together with Hodge symmetry
and Serre duality; for threefolds in characteristic $0$ it
follows with the addition input of the theorem of Popa and Schnell (Theorem \ref{thm:Popa-Schnell}).

These arguments fail in several places in positive characteristic. First, the
HKR isomorphism is only known to hold in general if $d\leq p$. Second, Hodge
symmetry fails in general already for surfaces. Finally, in positive
characteristic the isogeny class of the Albanese does not determine the Hodge
numbers $h^{0,1}$ or $h^{1,0}$ (see Remark \ref{rem:Popa Schnell is not
enough}).

In the case of surfaces, we are able to
overcome these difficulties with the additional input of our results on
Hodge--Witt numbers from Section \ref{sec:Hodge-Witt numbers}, which in turn
rely on the results on domino numbers of Section \ref{sec:domino numbers}.
Despite its elementary statement, we do not know a direct proof of Theorem
\ref{thm:hodge numbers for surfaces} avoiding topological Hochschild homology
machinery.

\begin{theorem}\label{thm:hodge numbers for surfaces}
  Suppose that $X$ and $Y$ are smooth proper surfaces over an arbitrary field
  $k$. If $X$ and $Y$ are FM-equivalent, then $h^{i,j}(X)=h^{i,j}(Y)$ for all $i,j$.
\end{theorem}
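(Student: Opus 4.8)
The plan is to determine all nine Hodge numbers of a surface by combining the Hochschild invariants of Theorem~\ref{thm:HH and hodge numbers}, Serre duality, and the invariance of the Hodge--Witt numbers from Theorem~\ref{thm:hodge-witt numbers}. First I would reduce to the case that $k$ is a perfect field of characteristic $p>0$. In characteristic $0$ the statement is classical (it follows from HKR, Serre duality, and Hodge symmetry, and is a case of~\cite{popa-schnell}), so assume $k$ has characteristic $p>0$; replacing $X,Y$ by $X_{\overline{k}},Y_{\overline{k}}$ changes neither the Hodge numbers (flat base change for the coherent cohomology of smooth proper schemes) nor the existence of an FM-equivalence, so we may take $k$ algebraically closed.

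I would next extract the constraints coming from Hochschild homology. Since $\dim X=2$, the HKR spectral sequence degenerates in every characteristic, so Theorem~\ref{thm:HH and hodge numbers} shows that
\[
  h^{0,0}+h^{1,1}+h^{2,2},\qquad h^{1,0}+h^{2,1},\qquad h^{2,0},\qquad h^{0,1}+h^{1,2},\qquad h^{0,2}
\]
are derived invariants. Serre duality gives $h^{i,j}=h^{2-i,2-j}$, and $h^{0,0}=\dim_k\H^0(X,\mathcal{O}_X)$ is a derived invariant, since over the algebraically closed field $k$ it equals the number of connected components of $X$, which an FM-equivalence preserves. Substituting, I obtain the derived invariance of $h^{2,0}=h^{0,2}$, of $h^{1,1}$ (from $h^{2,2}=h^{0,0}$), and of the single quantity $h^{1,0}+h^{0,1}$ (from $h^{2,1}=h^{0,1}$ and $h^{1,2}=h^{1,0}$). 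Thus every Hodge number is pinned down except for the splitting of $h^{1,0}+h^{0,1}$ into its two summands.

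The crux is therefore to separate $h^{1,0}$ from $h^{0,1}$, and this is the only place where positive characteristic genuinely intervenes: Hodge symmetry may fail, so the Hochschild sum does not suffice. Here I would use Crew's formula~\eqref{eq:Crew's formula}, whose $i=0$ instance reads
\[
  h^{0,0}-h^{0,1}+h^{0,2}=\chi(\mathcal{O}_X)=\sum_j(-1)^jh^{0,j}_W.
\]
By Theorem~\ref{thm:hodge-witt numbers} each $h^{0,j}_W$ is a derived invariant, hence so is $\chi(\mathcal{O}_X)$; as $h^{0,0}$ and $h^{0,2}$ are already known to be invariant, solving for $h^{0,1}$ exhibits it as a derived invariant, and then $h^{1,0}=(h^{1,0}+h^{0,1})-h^{0,1}$ follows. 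Serre duality propagates invariance to $h^{2,1}=h^{0,1}$ and $h^{1,2}=h^{1,0}$, completing all nine entries.

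The real difficulty has been front-loaded into Theorem~\ref{thm:hodge-witt numbers}: the invariance of the numbers $h^{0,j}_W$ rests on the invariance of the domino numbers (Theorem~\ref{thm:domino numbers}), which is exactly where topological Hochschild homology is used. So within this proof the main obstacle is conceptual rather than computational—recognizing that it is $\chi(\mathcal{O}_X)$, and not any individual Hochschild group, that breaks the $h^{1,0}\leftrightarrow h^{0,1}$ symmetry, and that its invariance is supplied for free by Crew's formula once the Hodge--Witt numbers are known to be derived invariant.
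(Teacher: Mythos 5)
Your proposal is correct and follows essentially the same route as the paper: HKR plus Serre duality pin down every Hodge number except the splitting of $h^{1,0}+h^{0,1}$, and then the derived invariance of Hodge--Witt numbers (Theorem~\ref{thm:hodge-witt numbers}) combined with Crew's formula~\eqref{eq:Crew's formula} makes $\chi(\mathcal{O}_X)$ a derived invariant, which separates $h^{0,1}$ from $h^{1,0}$. The only cosmetic differences are that the paper passes to the perfection of $k$ rather than to an algebraic closure, and it reads off $h^{0,1}$ and $h^{1,0}$ from the invariance of $\chi(\Omega^i_X)$ for each $i$ rather than from the $i=0$ case alone.
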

\begin{proof}
  As described in Theorem \ref{thm:HH and hodge numbers}, the HKR isomorphism
  implies that $h^{2,0},h^{1,1},$ and $h^{0,2}$ are derived invariants, as are
  the sums $h^{0,1}+h^{1,2}$ and $h^{1,0}+h^{2,1}$. If $k$ has characteristic
  0, Serre duality and Hodge symmetry give the result. Suppose that $k$ has
  positive characteristic. We can assume $k$ is perfect since passage to the
  perfection does not change the Hodge numbers. Theorem \ref{thm:hodge-witt
  numbers} and~\eqref{eq:Crew's formula} gives that
  $\chi(\Omega^i_X)$ is a derived invariant for each $i$. It follows that
  $h^{0,1}$ and $h^{1,0}$ are derived invariants. By the HKR isomorphism,
  $h^{1,2}$ and $h^{2,1}$ are derived invariants.
\end{proof}

\begin{remark}
    For the positive characteristic case of the preceding theorem, we may
    instead argue as follows. As observed in the beginning of Section
    \ref{sec:first observations}, the cohomology group $\H^2(W\mathscr{O}_X)$,
    together with its $R^0$-module structure (that is, with its action of $F$
    and $V$), is derived invariant. The length of the $V$-torsion of
    $\H^2(W\mathscr{O}_X)$ is equal to the dimension of the tangent space at
    the origin of $\Pic^0_X$ minus the dimension of the tangent space at the
    origin of $(\Pic^0_X)_{\red}$ (see for instance \cite[Remarque
    II.6.4]{illusie-derham-witt}). By Theorem \ref{thm:Popa-Schnell}, the
    latter is a derived invariant as well. It follows that the dimension of the
    tangent space of $\Pic^0_X$ at the origin, which is equal to the Hodge
    number $h^{0,1}$, is a derived invariant. We then conclude by Serre duality
    and HKR, as before.
\end{remark}

We next consider threefolds in positive characteristic. To ensure that the HKR
spectral sequence degenerates, we might restrict our attention to
characteristic $p\geq 3$.\footnote{The examples of~\cite{antieau-bhatt-mathew}
of varieties with non-degenerate HKR spectral sequence $2p$-dimensional. We do
not know an example of a threefold in characteristic 2 with non-degenerate HKR
spectral sequence.} Even with this restriction, the failure of Hodge symmetry
and of Popa--Schnell to determine $h^{0,1}$ means that we do not have enough
control to prove derived invariance of Hodge numbers of threefolds in general
(see however Theorem \ref{thm:Mazur Ogus and derived equivalence}). We record
the following consequence of Theorem \ref{thm:hodge-witt numbers}.

\begin{theorem}\label{thm:hodge numbers for threefolds}
    Suppose that $X$ and $Y$ are smooth proper schemes of dimension $3$
    over $k$, a perfect field of positive characteristic $p$. If $X$ and
    $Y$ are FM-equivalent, then $\chi(\Omega^i_X)=\chi(\Omega^i_Y)$ for
    each $0\leq i\leq 3$.
\end{theorem}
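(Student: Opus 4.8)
The plan is to deduce this immediately from the derived invariance of the Hodge--Witt numbers, Theorem \ref{thm:hodge-witt numbers}, together with Crew's formula \eqref{eq:Crew's formula}. First I would invoke Theorem \ref{thm:hodge-witt numbers}: since $X$ and $Y$ are FM-equivalent smooth proper threefolds over the perfect field $k$, we have $h^{i,j}_W(X)=h^{i,j}_W(Y)$ for all $i,j$.

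Next I would apply Crew's formula \eqref{eq:Crew's formula}, which for each fixed $i$ expresses the Euler characteristic $\chi(\Omega^i_X)$ as the alternating sum $\sum_j(-1)^j h^{i,j}_W(X)$ of the Hodge--Witt numbers in the $i$th column. Combining the two facts yields
\[
    \chi(\Omega^i_X)=\sum_j(-1)^j h^{i,j}_W(X)=\sum_j(-1)^j h^{i,j}_W(Y)=\chi(\Omega^i_Y)
\]
for each $0\leq i\leq 3$, which is the assertion.

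There is in fact no genuine obstacle remaining at this point. All of the substantive work has already been carried out: the derived invariance of the crystalline and Hodge--Witt slopes (Theorem \ref{thm:Newton polygons}), that of the domino numbers (Theorem \ref{thm:domino numbers}), and their assembly into the Hodge--Witt numbers recorded in Theorem \ref{thm:hodge-witt numbers}. The present statement is a purely formal consequence, and the only point to keep in mind is that Crew's formula operates one column $i$ at a time, so that the alternating sum over $j$ of the derived-invariant quantities $h^{i,j}_W$ recovers exactly $\chi(\Omega^i)$; in particular, unlike in the surface case no appeal to Hodge symmetry or to the HKR isomorphism is needed here, so no restriction on $p$ enters.
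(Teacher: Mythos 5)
Your proposal is correct and is exactly the paper's proof: the result follows immediately from the derived invariance of the Hodge--Witt numbers (Theorem \ref{thm:hodge-witt numbers}) combined with Crew's formula~\eqref{eq:Crew's formula}, applied column by column in $i$. Your closing observations (no Hodge symmetry, HKR, or restriction on $p$ needed) are also consistent with the paper, whose statement indeed imposes no condition on $p$.
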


\begin{proof}
   This follows immediately from Theorem \ref{thm:hodge-witt numbers} and~\eqref{eq:Crew's formula}.
\end{proof}

\begin{remark}
    Suppose that $p\geq 3$, so that the spectral sequence~\eqref{eq:HKR spectral sequence} associated to a threefold $X$ degenerates. The HKR isomorphism then gives that certain sums of Hodge numbers of $X$ are derived invariants, as described in Theorem \ref{thm:HH and hodge numbers}. Combined with Serre duality and the obvious Hodge number $h^{0,0}=1$, we obtain 13 linearly independent relations which are preserved by derived equivalences on the 16 total Hodge numbers.
    
    It is not hard to check that the relations in the conclusion of Theorem \ref{thm:hodge numbers for threefolds} are \textit{not} in the span of these relations. Precisely, the result of Theorem \ref{thm:hodge numbers for threefolds} gives exactly one new linear relation on Hodge numbers that is preserved under derived equivalence.
\end{remark}

\subsection{Mazur--Ogus and Hodge--Witt varieties}

In this section, we prove the derived invariance of certain conditions on
de Rham and Hodge--Witt cohomology. We continue Notation~\ref{notation:k}, so that $k$ is a perfect field of positive characteristic $p$.

Following Joshi~\cite[2.31]{joshi-geography}, we make the following definition.

\begin{definition}
    Let $X$ be a smooth proper $k$-scheme. We say that $X$ is {\bf Mazur--Ogus} if
  \begin{enumerate}
      \item[(a)] the Hodge--de Rham spectral sequence for $X$ degenerates at $\E_1$, and
      \item[(b)] the crystalline cohomology groups of $X$ are torsion free.
  \end{enumerate}
\end{definition}
We view this as a rather mild set of assumptions, which still allow for a lot
of interesting behaviors in the Hodge--Witt cohomology of $X$. For instance, K3
surfaces, abelian varieties, and complete intersections in projective space are
Mazur--Ogus.

\begin{lemma}\label{lem:Mazur Ogus lemma}
  If $X$ is a smooth proper $k$-scheme, then the following conditions are equivalent:
  \begin{enumerate}
      \item[{\rm (1)}] $X$ is Mazur--Ogus;
      \item[{\rm (2)}] $b_n(X)=\sum_{i+j=n}h^{i,j}(X)$ for all $n$;
      \item[{\rm (3)}] $h^{i,j}(X)=h^{i,j}_W(X)$ for all $i,j$.
  \end{enumerate}
\end{lemma}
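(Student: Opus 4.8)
The plan is to prove the three equivalences by inserting $\dim_k\H^n_{\dR}(X/k)$ as an intermediary between $b_n(X)$ and $\sum_{i+j=n}h^{i,j}(X)$, and tracking exactly when each of the two bracketing inequalities degenerates to an equality. First I would recall the two standard inequalities at play. The Hodge--de Rham spectral sequence~\eqref{eq:Hodge to de Rham} gives $\dim_k\H^n_{\dR}(X/k)\leq\sum_{i+j=n}h^{i,j}(X)$ for every $n$, with equality for all $n$ if and only if the spectral sequence degenerates at $\E_1$; this is simply the statement that a nonzero differential strictly drops the total dimension in some degree. On the crystalline side, the inequality recalled just before Corollary~\ref{cor:betti numbers} gives $b_n(X)\leq\dim_k\H^n_{\dR}(X/k)$, with equality for all $n$ if and only if every crystalline cohomology group $\H^m(X/W)$ is torsion free.

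For the equivalence (1) $\Leftrightarrow$ (2), I would concatenate these into $b_n(X)\leq\dim_k\H^n_{\dR}(X/k)\leq\sum_{i+j=n}h^{i,j}(X)$. Condition (2) asserts equality of the two outer terms for all $n$. Since both inequalities hold a priori, equality of the two ends for every $n$ is equivalent to equality in each of the two inequalities separately for every $n$, that is, to the simultaneous degeneration of the Hodge--de Rham spectral sequence (condition (a) of Mazur--Ogus) and torsion-freeness of crystalline cohomology (condition (b)). This is exactly condition (1).

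For the equivalence (2) $\Leftrightarrow$ (3), I would instead use the two facts about Hodge--Witt numbers assembled in Section~\ref{sec:Hodge-Witt numbers}: the identity $b_n=\sum_{i+j=n}h^{i,j}_W$ of~\eqref{eq:HW numbers and Betti numbers}, which holds unconditionally, and the termwise inequality $h^{i,j}_W\leq h^{i,j}$ of~\eqref{eq:inequality for HW and hodge}. Combining the identity with (2) yields $\sum_{i+j=n}h^{i,j}_W=\sum_{i+j=n}h^{i,j}$ for all $n$; together with the termwise inequality, this forces $h^{i,j}_W=h^{i,j}$ for every pair $(i,j)$, which is (3). Conversely, (3) immediately gives $\sum_{i+j=n}h^{i,j}=\sum_{i+j=n}h^{i,j}_W=b_n$, which is (2).

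I do not anticipate a genuine obstacle: the whole argument is bookkeeping with inequalities that are already established, and no new geometric input beyond Sections~\ref{sec:slopes and isogeny invariants}--\ref{sec:Hodge-Witt numbers} is needed. The one point that warrants care is the logical step that equality of the two outer terms of a chain of inequalities forces equality at each intermediate stage; this is valid precisely because each individual inequality is known to hold beforehand, so I would be sure to invoke both the Hodge--de Rham and the crystalline inequalities (and their respective degeneracy criteria, applied uniformly over all $n$) before collapsing the chain.
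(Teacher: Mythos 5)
Your proof is correct and follows essentially the same route as the paper: the chain $b_n(X)\leq\dim_k\H^n_{\dR}(X/k)\leq\sum_{i+j=n}h^{i,j}(X)$ with its two degeneracy criteria for $(1)\Leftrightarrow(2)$, and the combination of~\eqref{eq:HW numbers and Betti numbers} with the termwise inequality~\eqref{eq:inequality for HW and hodge} for $(2)\Leftrightarrow(3)$. You simply spell out the bookkeeping that the paper leaves implicit, and all the steps are valid.
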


\begin{proof}
  In general, we have inequalities
  \[
    b_n(X)\leq \dim\H^n_{\dR}(X/k)\leq\sum_{i+j=n}h^{i,j}(X)
  \]
  The first of these is an equality if and only if $\H^n(X/W)$ and
  $\H^{n+1}(X/W)$ are torsion free,
  and the second is an equality if and only if the Hodge--de Rham spectral
  sequence in degree $n$ degenerates at $\E_1$. This shows $(1)\iff (2)$.
  Using~\eqref{eq:inequality for HW and hodge} and~\eqref{eq:HW numbers and
  Betti numbers} we deduce $(2)\iff (3)$.
\end{proof}

\begin{theorem}\label{thm:Mazur Ogus and derived equivalence}
    Let $X$ be a smooth proper $k$-scheme of dimension $\leq 3$. If $X$ is
    Mazur--Ogus and if $Y$ is a smooth proper $k$-scheme such that $\Dscr^b(X)\cong
    \Dscr^b(Y)$, then $Y$ is Mazur--Ogus and we have $h^{i,j}(X)=h^{i,j}(Y)$ for all $i,j$.
\end{theorem}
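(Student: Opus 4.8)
The plan is to derive the statement from three ingredients established earlier: the numerical characterization of the Mazur--Ogus condition in Lemma~\ref{lem:Mazur Ogus lemma}, the derived invariance of the Hodge--Witt numbers in dimension $\leq 3$ (Theorem~\ref{thm:hodge-witt numbers}), and the inequality $h^{i,j}_W\leq h^{i,j}$ from~\eqref{eq:inequality for HW and hodge}. Since a $k$-linear equivalence $\Dscr^b(X)\iso\Dscr^b(Y)$ is the same as an FM-equivalence, all of the derived-invariance results of the preceding sections apply to the pair $(X,Y)$.

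First I would extract a one-sided comparison of the Hodge numbers. Because $X$ is Mazur--Ogus, the implication $(1)\Rightarrow(3)$ of Lemma~\ref{lem:Mazur Ogus lemma} gives $h^{i,j}(X)=h^{i,j}_W(X)$ for all $i,j$, and Theorem~\ref{thm:hodge-witt numbers} gives $h^{i,j}_W(X)=h^{i,j}_W(Y)$. Feeding these into the general inequality~\eqref{eq:inequality for HW and hodge} applied to $Y$ produces
\[
  h^{i,j}(X)=h^{i,j}_W(X)=h^{i,j}_W(Y)\leq h^{i,j}(Y)
\]
for every pair $(i,j)$. Thus the Hodge numbers of $Y$ dominate those of $X$ termwise, and it remains only to rule out strict inequality.

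The crux is to promote this term-by-term inequality to an equality, and for this I would use a total count. Summing the conclusion of Theorem~\ref{thm:HH and hodge numbers} over all $i$ shows that the total Hodge number $\sum_{i,j}h^{i,j}$ is a derived invariant, provided the HKR spectral sequence degenerates for both schemes; this holds whenever $d\leq p$, which is automatic for $d\leq 2$ and forces the restriction $p\geq 3$ in the threefold case. Granting $\sum_{i,j}h^{i,j}(X)=\sum_{i,j}h^{i,j}(Y)$, the termwise inequalities above, having equal sums, must all be equalities, so $h^{i,j}(X)=h^{i,j}(Y)$ for all $i,j$. Finally $h^{i,j}(Y)=h^{i,j}_W(Y)$ then holds for all $i,j$, so the implication $(3)\Rightarrow(1)$ of Lemma~\ref{lem:Mazur Ogus lemma} shows that $Y$ is again Mazur--Ogus, completing the argument.

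The hard part is precisely this passage from the inequality to the equality: the one-sided bound is pure bookkeeping with the earlier theorems, but pinning down the \emph{full} Hodge total requires HKR degeneration, and this is the only step where the characteristic hypothesis $p\geq 3$ (for $d=3$) is needed.
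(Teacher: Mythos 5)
Your proposal is correct and follows essentially the same route as the paper: the one-sided bound $h^{i,j}(X)=h^{i,j}_W(X)=h^{i,j}_W(Y)\leq h^{i,j}(Y)$ from Lemma~\ref{lem:Mazur Ogus lemma}, Theorem~\ref{thm:hodge-witt numbers}, and~\eqref{eq:inequality for HW and hodge}, then HKR degeneration (Theorem~\ref{thm:HH and hodge numbers}, requiring $p\geq 3$ when $d=3$) to force equality, and finally Lemma~\ref{lem:Mazur Ogus lemma} again to conclude $Y$ is Mazur--Ogus. The only cosmetic difference is that you sum the HKR relations over all anti-diagonals to compare the global total $\sum_{i,j}h^{i,j}$, whereas the paper compares each anti-diagonal sum $\sum_j h^{j,j-i}$ separately; the squeeze argument is identical either way.
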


\begin{proof}
  Using Lemma \ref{lem:Mazur Ogus lemma}, Theorem \ref{thm:hodge-witt numbers}, and~\eqref{eq:inequality for HW and hodge} we have
  \[
    h^{i,j}(X)=h^{i,j}_W(X)=h^{i,j}_W(Y)\leq h^{i,j}(Y)
  \]
  for each $i,j$. Using the assumption that $p\geq 3$, we have by Theorem \ref{thm:HH and hodge numbers} that
  \[
    \sum_{j}h^{j,j-i}(X)=\sum_{j}h^{j,j-i}(Y)
  \]
  for each $i$. We conclude that $h^{i,j}(X)=h^{i,j}(Y)$ for all $i,j$ and
  hence $h^{i,j}_W(Y)=h^{i,j}(Y)$ for all $i$ and $j$. By Lemma \ref{lem:Mazur Ogus lemma} we conclude that $Y$ is
  Mazur--Ogus.
\end{proof}

\begin{definition}
  Following~\cite[Section~IV.4]{illusie-raynaud}, we say that a smooth proper
  $k$-scheme is {\bf Hodge--Witt} if $\H^j(X,W\Omega^i_X)$ is finitely
  generated as a $W$-module for all $i$ and $j$. We say that $X$ is {\bf
  derived Hodge--Witt} if $\TR_i(X)$ is finitely generated as a $W$-module for
  all $i$.
\end{definition}

Hodge--Witt is implied by ordinarity, but is weaker
than it. For example, a K3 surface is Hodge--Witt if and only if it is non-supersingular, whereas it is ordinary if and only if the associated formal group has height $1$ (see the \cite[II.7.2]{illusie-derham-witt}).

\begin{proposition}\label{prop:Hodge--Witt}
    Let $X$ be a smooth proper $k$-scheme. The following are equivalent.
    \begin{enumerate}
        \item[{\rm (1)}] $X$ is Hodge--Witt.
        \item[{\rm (2)}] The slope spectral sequence for $X$ degenerates at $\E_1$.
        \item[{\rm (3)}] $T^{i,j}(X)=0$ for all $i,j$.
    \end{enumerate}
\end{proposition}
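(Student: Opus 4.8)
The plan is to establish the two equivalences $(1)\iff(3)$ and $(1)\iff(2)$ separately: the first is where the domino machinery of this section does the real work, while the second combines an elementary finiteness observation with the $\E_1$-degeneration theorem of Illusie--Raynaud.

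I would dispatch $(2)\Rightarrow(1)$ first, since it is formal. If the slope spectral sequence~\eqref{eq:slope SS} degenerates at $\E_1$, then each term $\H^t(W\Omega^s_X)=\E_1^{s,t}=\E_\infty^{s,t}$ is a subquotient of the abutment $\H^{s+t}(X/W)$. Since the crystalline cohomology of a smooth proper scheme is finitely generated over $W$, every $\H^t(W\Omega^s_X)$ is then finitely generated, which is exactly the Hodge--Witt condition.

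The core of the argument is $(1)\iff(3)$, which I would read off from the structure of the coherent $R$-modules $M=\H^j(W\Omega^\bullet_X)$. By Lemma~\ref{lem:another little lemma}, all of the terms $M^i=\H^j(W\Omega^i_X)$ are finitely generated over $W$ if and only if every de Rham--Witt differential $M^i\to M^{i+1}$ vanishes. I would then match this condition with the dominoes using the factorization~\eqref{eq:dominofactorization}: the differential $M^i\to M^{i+1}$ is zero precisely when $M^i/V^{-\infty}Z^iM=0$, equivalently when $\Dom^i(M)=0$; and because a nonzero domino is a nonzero iterated extension of the one-dimensional modules $U_\sigma$, additivity of domino dimensions (Lemma~\ref{lem:additivity of domino numbers}) gives $\Dom^i(M)=0$ if and only if $T^i(M)=0$. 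Letting $j$ range, this shows that $X$ is Hodge--Witt if and only if every de Rham--Witt differential vanishes, if and only if $T^{i,j}(X)=0$ for all $i,j$, which is $(1)\iff(3)$.

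It remains to prove $(1)\Rightarrow(2)$, and this is the main obstacle. Under $(1)$ the $d_1$-differentials of~\eqref{eq:slope SS}, which are exactly the de Rham--Witt differentials, all vanish by the previous paragraph, so $\E_2=\E_1$; but the vanishing of the higher differentials is not formal. Indeed, since the slope differentials do not commute with $F$ and $V$ (Remark~\ref{rem:differentials are compatible}(3)), there is no slope obstruction forcing $d_r=0$ for $r\geq2$, and controlling these torsion-valued higher differentials is precisely the content of the degeneration theorem of Illusie--Raynaud~\cite{illusie-raynaud}. I would invoke it to conclude $(1)\Rightarrow(2)$ and thereby close all three equivalences.
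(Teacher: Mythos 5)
Your proposal is correct, but it is organized quite differently from the paper's proof, which is a pure citation: the paper deduces $(1)\iff(2)$ from \cite[IV.4.6.2]{illusie-raynaud} and $(1)\iff(3)$ from \cite[3.1.4]{MR726420}, with no argument given. You instead prove two of the three equivalences directly from the machinery already developed in the paper. Your $(2)\Rightarrow(1)$ is the formal subquotient argument (degeneration at $\E_1$ makes each $\H^t(W\Omega^s_X)$ a subquotient of the finitely generated $W$-module $\H^{s+t}(X/W)$), and your $(1)\iff(3)$ correctly chains together Lemma~\ref{lem:another little lemma}, the observation that the differential $M^i\to M^{i+1}$ vanishes exactly when $\Dom^i(M)=0$ (both directions of which you verify via the definitions of $V^{-\infty}Z^i$ and $F^{\infty}B^{i+1}$), and the fact that a nonzero domino has positive dimension because it is an iterated extension of at least one one-dimensional $U_\sigma$, using the additivity of Lemma~\ref{lem:additivity of domino numbers}. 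Only for $(1)\Rightarrow(2)$ do you fall back on Illusie--Raynaud, and you are right that this is the one genuinely non-formal step: rational degeneration only forces the higher differentials to be torsion-valued, so their vanishing under the Hodge--Witt hypothesis really is the content of \cite[IV.4.6.2]{illusie-raynaud}, the same result the paper cites. What your route buys is a clear accounting of where the depth lies: the equivalence of finite generation with vanishing dominoes is elementary given coherence, and the external input is needed only for the implication from Hodge--Witt to $\E_1$-degeneration. What the paper's route buys is brevity, since both equivalences are already in the literature.
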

\begin{proof}
    By~\cite[IV.4.6.2]{illusie-raynaud}, $X$ is Hodge--Witt if and only if the
    slope spectral sequence for $X$ degenerates at $\E_1$, and so we have
    $(1)\iff(2)$. We have $(1)\iff (3)$ by for instance
    \cite[3.1.4]{MR726420}.
\end{proof}

\begin{theorem}\label{thm:Hodge--Witt and derived Hodge--Witt}
    Let $X$ be a smooth proper $k$-scheme. If $X$ is of dimension $\leq 3$, then $X$ is Hodge--Witt if and only if it is derived Hodge--Witt.
\end{theorem}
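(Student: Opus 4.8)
The plan is to translate both conditions into the vanishing of domino numbers and then to link the two families of domino numbers using the additivity relation of Proposition~\ref{prop:degenerate at the level of dominos}, which is available precisely because $\dim X\leq 3$. On the classical side this is immediate: Proposition~\ref{prop:Hodge--Witt} already states that $X$ is Hodge--Witt if and only if $T^{i,j}(X)=0$ for all $i,j$. On the derived side, recall that $\TR_\bullet(X)$ is a coherent $R$-module by Proposition~\ref{prop:TR is coherent}. First I would combine Lemma~\ref{lem:another little lemma} with the fact (recorded in Section~\ref{sec:domino numbers}) that the domino attached to a vanishing differential is zero and that $T^i(M)$ counts the simple factors $U_\sigma$ of $\Dom^i(M)$, so that $T^i(M)=0$ exactly when $\Dom^i(M)=0$, i.e.\ exactly when the relevant differential vanishes. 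Applying this to $\TR_\bullet(X)$, Lemma~\ref{lem:another little lemma} gives that $\TR_i(X)$ is finitely generated over $W$ for every $i$ if and only if every differential in the complex $\TR_\bullet(X)$ vanishes, which happens if and only if $T^{\cyc}_i(X)=0$ for all $i$. In short, $X$ is derived Hodge--Witt if and only if all derived domino numbers vanish.

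With both sides so reinterpreted, the equivalence reduces to the statement that $T^{i,j}(X)=0$ for all $i,j$ if and only if $T^{\cyc}_i(X)=0$ for all $i$. Since $\dim X\leq 3$, Lemma~\ref{lem:degeneration for threefolds} shows that the descent spectral sequence is degenerate at the level of dominoes, so Proposition~\ref{prop:degenerate at the level of dominos} yields
\[
  T^{\cyc}_i(X)=\sum_{j\geq 0} T^{i+j,j}(X)
\]
for every $i$. The key point is that all domino numbers are non-negative integers, so a sum of them vanishes precisely when each summand does. If $X$ is Hodge--Witt, every $T^{i,j}(X)$ vanishes and the formula forces every $T^{\cyc}_i(X)$ to vanish, hence $X$ is derived Hodge--Witt. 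Conversely, if $X$ is derived Hodge--Witt then $T^{\cyc}_i(X)=0$ for all $i$; as $i$ ranges over $\ZZ$ and $j\geq 0$ the pairs $(i+j,j)$ exhaust all $(a,b)$ with $b\geq 0$, while $T^{a,b}=0$ automatically for $b<0$, so non-negativity forces $T^{i,j}(X)=0$ for all $i,j$ and $X$ is Hodge--Witt.

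The step I expect to be the main obstacle is exactly this bridge between the derived and classical domino numbers, and it is here that the hypothesis $\dim X\leq 3$ is essential: the clean additivity formula relies on degeneracy at the level of dominoes, and in higher dimensions differentials on later pages of the descent spectral sequence can move dominoes between filtration steps, so no such identity is available and the equivalence should not be expected in general. I would also note for context that the implication ``Hodge--Witt $\Rightarrow$ derived Hodge--Witt'' can be established in any dimension directly from the descent spectral sequence~\eqref{eq:Hesselholt SS}, since finitely generated $W$-modules are closed under subquotients and extensions and $\TR_i(X)$ carries a finite filtration with graded pieces subquotient to the finitely generated groups $\H^t(W\Omega^s_X)$; only the reverse implication genuinely needs the low-dimensional input.
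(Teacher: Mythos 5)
Your proposal is correct, but it takes a genuinely different route from the paper's proof --- in fact it is precisely the ``less direct proof'' that the authors themselves sketch in the remark immediately following the theorem, namely arguing via the equivalent conditions of Proposition~\ref{prop:Hodge--Witt} together with Proposition~\ref{prop:degenerate at the level of dominos}. The paper's actual proof never mentions dominoes: it invokes the finite generation of $\H^j(W\Omega^i_X)$ for $j=0,1$, observes that the differentials of the descent spectral sequence have vertical degree $2$, and deduces that $X$ is Hodge--Witt if and only if every term $\E_{\infty}^{i,j}$ is finitely generated; since each $\TR_i(X)$ carries a finite filtration with graded pieces the $\E_{\infty}^{i,j}$ and $W$ is Noetherian, this is in turn equivalent to all $\TR_i(X)$ being finitely generated. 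Your route instead converts both conditions into the vanishing of domino numbers: Proposition~\ref{prop:Hodge--Witt} on the classical side, and on the derived side Lemma~\ref{lem:another little lemma} combined with the observations that a differential of a coherent $R$-module vanishes if and only if its associated domino vanishes, and that $T^i(M)=0$ forces $\Dom^i(M)=0$ by the $U_\sigma$-d\'evissage and additivity (Lemma~\ref{lem:additivity of domino numbers}); you then conclude from the identity $T^{\cyc}_i(X)=\sum_{j\geq 0}T^{i+j,j}(X)$ and the non-negativity of domino numbers. Both arguments are sound, and each buys something: the paper's is more elementary, needing only Noetherian bookkeeping in the spectral sequence plus the low-degree finiteness input; yours isolates exactly where the hypothesis $\dim X\leq 3$ enters (degeneracy at the level of dominoes, Lemma~\ref{lem:degeneration for threefolds}), and consequently proves the slightly more general statement that the equivalence holds for any smooth proper $X$, in any dimension, whose descent spectral sequence is degenerate at the level of dominoes --- while also making explicit, as you note, that the implication Hodge--Witt $\Rightarrow$ derived Hodge--Witt requires no dimension hypothesis at all.
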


\begin{proof}
    By \cite[3.1.4]{MR726420}, we have that for $j=0,1$ the Hodge--Witt
    cohomology groups $\H^j(W\Omega^i_X)$ are finitely generated for each $i$.
    The differentials in the descent spectral sequence have vertical degree 2
    (with our conventions), and hence $X$ is Hodge--Witt if and only if the terms $\E_{\infty}^{i,j}$ appearing on the $\E_{\infty}$-page of the descent spectral sequence are finitely generated for all $i,j$.
    
    Furthermore, each of the $\TR_i(X)$ admits a filtration whose successive quotients are given by terms appearing on the $\E_{\infty}$ page of the descent spectral sequence for $X$. In particular, we see that all of the $\TR_i(X)$ are finitely generated $W$-modules if and only if $\E_{\infty}^{i,j}$ is finitely generated for all $i,j$.
\end{proof}

\begin{corollary}\label{cor:hw}
  Let $X$ and $Y$ be smooth proper threefolds over $k$. If $\Dscr^b(X)\we\Dscr^b(Y)$ and if $X$ is Hodge--Witt, then so is $Y$.
\end{corollary}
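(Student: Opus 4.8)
The plan is to deduce this from the characterization of the Hodge--Witt condition given in Theorem~\ref{thm:Hodge--Witt and derived Hodge--Witt}, exploiting the fact that the derived Hodge--Witt condition is visibly a derived invariant. First I would apply that theorem to $X$: since $X$ is a threefold and is Hodge--Witt, it is derived Hodge--Witt, meaning that each $\TR_i(X)$ is finitely generated as a $W$-module.

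Next I would invoke the derived invariance of the groups $\TR_*$. As recalled in Section~\ref{sec:Topological Hochschild homology}, the spectrum $\TR(X)=\TR(\THH(\Perfscr(X)))$ depends only on the $k$-linear dg category $\Perfscr(X)$, and for smooth proper schemes $\Perfscr(X)\we\Dscr^b(X)$. The hypothesis $\Dscr^b(X)\we\Dscr^b(Y)$ therefore yields isomorphisms $\TR_i(X)\iso\TR_i(Y)$ of $W$-modules for every $i$. In particular each $\TR_i(Y)$ is finitely generated over $W$, so $Y$ is derived Hodge--Witt. Applying Theorem~\ref{thm:Hodge--Witt and derived Hodge--Witt} a second time, now to $Y$, we conclude that $Y$ is Hodge--Witt, as desired.

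There is essentially no obstacle beyond bookkeeping here, since the real work has already been done in establishing Theorem~\ref{thm:Hodge--Witt and derived Hodge--Witt}. The one point worth flagging is where the dimension hypothesis enters: the equivalence between the Hodge--Witt and derived Hodge--Witt properties is only available in dimension $\leq 3$, and the argument uses it at \emph{both} ends of the chain, so it is important that $X$ and $Y$ are both threefolds (as is part of the hypothesis). No analysis of the differentials, dominoes, or the slope spectral sequence is needed for this corollary; the entire content is repackaging the invariance of $\TR_*$ through Theorem~\ref{thm:Hodge--Witt and derived Hodge--Witt}.
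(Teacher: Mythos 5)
Your proof is correct and is exactly the argument the paper intends: the corollary is stated immediately after Theorem~\ref{thm:Hodge--Witt and derived Hodge--Witt} precisely because it follows by applying that theorem to $X$, transferring finite generation of the $\TR_i$ across the equivalence (since $\TR_*$ is a derived invariant), and applying the theorem again to $Y$. Your remark that the dimension hypothesis is needed on both sides is also the right point to flag.
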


In particular, the equivalent conditions recorded in Proposition \ref{prop:Hodge--Witt} are all derived invariants in dimension $d\leq 3$.

\begin{example}
    Joshi shows in~\cite[Corollary~6.2]{joshi-exotic} that if $X$ is an
    $F$-split threefold, then it is Hodge--Witt. Thus, Corollary~\ref{cor:hw}
    applies to $F$-split threefolds. This has recently been extended to
    quasi-$F$-split threefolds by Nakkajima~\cite[Corollary~1.8]{nakkajima} and
    in particular applies then to all finite height Calabi--Yau threefolds
    by~\cite{yobuko}.
\end{example}

\begin{remark}
  Using the equivalent conditions of Proposition \ref{prop:Hodge--Witt}, one can give a less direct proof
  of Theorem \ref{thm:Hodge--Witt and derived Hodge--Witt} using Proposition
  \ref{prop:degenerate at the level of dominos}.
\end{remark}

\section{Twisted K3 surfaces}\label{sec:K3}

In this section we will study some examples with interesting behavior in the
slope and descent spectral sequences. Specifically, we will completely
compute $\TR$ and $\TP$ for twisted K3 surfaces. This will give a different
perspective on the twisted K3 crystals defined and studied in
\cite{bragg-lieblich-twistor}. For the remainder of this section, we let $k$ be an algebraically closed field of positive
characteristic $p$.

While we have only discussed derived invariants of varieties so far, much of our discussion carries over unchanged to twisted varieties. For instance, $\TR$ and $\TP$ are defined for an abstract dg category. Given $\alpha\in\Br(X)$ a Brauer class on a smooth proper variety $X$, we let $\TR_*(X,\alpha)$ and $\TP_*(X,\alpha)$ denote their application to the natural enhancement of the bounded derived category of $\alpha$-twisted coherent sheaves on $X$. There is a descent spectral sequence
\begin{equation}\label{eq:twisted Hesselholt SS}
    \E_2^{s,t}=\H^t(X,W\Omega^s_{X})\Rightarrow\TR_{s-t}(X,\alpha)
\end{equation}
which computes the $\TR_*(X,\alpha)$ in terms of the Hodge--Witt cohomology groups of the underlying variety $X$. We remark that the Hodge--Witt cohomology groups of a K3 surface are determined very explicitly in \cite[II.7.2]{illusie-derham-witt}. We will see that, although the objects appearing on the $\E_2$ page of~\eqref{eq:twisted Hesselholt SS} are the same as those on the $\E_2$ page of the descent spectral sequence for $X$, the differentials may be different.
We let $d_r^\alpha$ denote the differentials in the $\alpha$-twisted descent
spectral sequence.

Let $X$ be a K3 surface over $k$. We begin by computing the topological periodic cyclic homology groups $\TP_*(X)$ of $X$ in terms of crystalline cohomology. Since the crystalline cohomology groups of $X$ are all torsion-free, the crystalline--TP spectral sequence~\eqref{eq:crystalline to TP} degenerates; the corresponding filtration splits noncanonically and gives an isomorphism of $W$-modules
    \[
        \TP_{2i}(X)\iso\H^0(X/W)\oplus\H^2(X/W)\oplus\H^4(X/W)
    \]
for each $i$. In general, the $K$-vector spaces obtained by tensoring $\TP_i$
with $\QQ$ admit a natural Frobenius operator coming from the cyclotomic
Frobenius and are so endowed with a structure of $F$-isocrystal.\footnote{By an argument of Scholze, the filtration from the crystalline--$\TP$ spectral sequence can be canonically split rationally by using Adams operations; see~\cite{elmanto}.} However, after inverting $p$ this isomorphism does not carry the Frobenius on the left hand side to the natural Frobenius operator $\Phi$ on crystalline cohomology. Rather, consider the Mukai crystal
    \[
        \widetilde{\H}(X/W)=\H^0(X/W)(-1)\oplus\H^2(X/W)\oplus\H^4(X/W)(1)
    \]
as introduced in \cite{MR3429474}. The above can then be upgraded to isomorphisms $\TP_{2i}(X)_{\QQ}\cong\widetilde{\H}(X/K)(i+1)$ of $F$-isocrystals for each $i$, where $\widetilde{\H}(X/K)=\widetilde{\H}(X/W)\otimes_WK$. In fact, for any (possibly twisted) surface, the Frobenius is defined integrally\footnote{To
see this, one must use the Nygaard filtration.} on $\TP_{i}(X,\alpha)$ for $i\leq -2$. The filtration on $\TP$ can be split even integrally, and so we obtain an isomorphism
    \begin{equation}\label{eq:tricky twists 2}
      \TP_{2i}(X)\cong\widetilde{\H}(X/W)(i+1)
    \end{equation}
of $F$-crystals for each $i\leq -1$.

\subsection{Finite height}

Let $(X,\alpha)$ be a twisted K3 surface over $k$, and suppose that $X$ has finite
height. By the computation of the Hodge--Witt cohomology groups of $X$ in \cite[Section II.7.2]{illusie-derham-witt}, we see that the slope spectral
sequence for $X$ and the descent spectral sequences for $X$ and
$(X,\alpha)$ are all degenerate for degree reasons. Hence, both $\TR_0(X)$ and $\TR_0(X,\alpha)$ admit a filtration by $R^0$-submodules with graded pieces $\H^0(W\mathscr{O}_X)$, $\H^1(W\Omega^1_X)$, and $\H^2(W\Omega^2_X)$. As these groups are all torsion free, this filtration certainly splits at the level of $W$-modules. In fact, by computing the appropriate $\Ext$ groups, one can show that it even splits at the level of $R^0$-modules. We conclude that there exist (non canonical) isomorphisms
\begin{equation}\label{eq:TR computation finite height}
    \TR_i(X)\cong\TR_i(X,\alpha)\cong\begin{cases}
        \H^2(W\Oscr_X)&\text{if $i=-2$,}\\
        \H^0(W\mathscr{O}_X)\oplus\H^1(W\Omega^1_X)\oplus\H^2(W\Omega^2_X)&\text{if $i=0$,}\\
        \H^0(W\Omega^2_X)&\text{if $i=2$}
    \end{cases}
\end{equation}
of $R^0$-modules, and $\TR_i(X)=\TR_i(X,\alpha)=0$ otherwise.

We next compute $\TP$. Because $\TR$ is concentrated in even degrees, the Tate spectral sequences
for $X$ and $(X,\alpha)$ degenerate and $\TP$ is also concentrated in even degrees.
We have a filtration on $\TP_{2i}(X,\alpha)$ with graded pieces
$\H^0(W\Omega^2_X)$, $\TR_0(X,\alpha)$, and $\H^2(W\Oscr_X)$. Moreover, for
$i\leq -1$, keeping track of the appropriate Tate twists this yields a filtration by $F$-crystals,\footnote{By
\cite[II.7.2a]{illusie-derham-witt} the Hodge--Witt cohomology groups of $X$ are finitely generated and torsion free.} whose graded pieces are
$\H^0(W\Omega^2_X)(i-1)$, $\TR_0(X,\alpha)(i)$, and $\H^2(W\Oscr_X)(i+1)$.

In particular, this determines the $\TP_{2i}(X,\alpha)$ as $F$-isocrystals. To
determine their structure as $F$-crystals, one needs some additional input.
This can be done by comparison with the B-field constructions of
\cite{bragg-lieblich-twistor}, from which one can compute the Hodge polygon of
$\TP_{2i}(X,\alpha)$. Using Katz's Newton--Hodge decomposition~\cite[Theorem
1.6.1]{MR563463}, one then deduces that the filtration on $\TP_{2i}(X,\alpha)$
in fact splits canonically, and so we have a canonical isomorphism
\[
  \TP_{2i}(X,\alpha)=\H^2(W\mathscr{O}_X)(i+1)\oplus\TR_0(X,\alpha)(i)\oplus\H^0(W\Omega^2_X)(i-1)
\]
of $F$-crystals for each $i\leq -1$. In particular, by~\eqref{eq:TR computation finite height}, we see that $\TP_{2i}(X)\cong\TP_{2i}(X,\alpha)$ as $F$-crystals for each $i\leq -1$, although not canonically.

\subsection{Supersingular}

We now consider a twisted K3 surface $(X,\alpha)$ where $X$ is supersingular.
We will examine the descent spectral sequence for $(X,\alpha)$. In particular,
we will see that it is not degenerate.

We begin with a few general facts. For any smooth $k$-scheme $X$, there is
a natural map of \'etale sheaves
\begin{equation}\label{eq:dlog map}
  \mathbb{G}_m\xrightarrow{\dlog} W\Omega^1_X
\end{equation}
given on sections by $f\mapsto\tfrac{d[f]}{[f]}$. There is an induced map on cohomology
\begin{equation}\label{eq:dlog map on cohomology}
  \dlog:\H^2(X,\mathbb{G}_m)\to\H^2(W\Omega^1_X).
\end{equation}

\begin{lemma}\label{lem:differential in descent ss}
    Let $X$ be a smooth proper $k$-scheme and let
    $\alpha\in\H^2(X,\mathbb{G}_m)$.
    \begin{enumerate}
        \item[{\rm (a)}]
        The differential
            \[
            d_2^\alpha:\H^0(W\mathscr{O}_X)\to\H^2(W\Omega^1_X)
            \]
            appearing in the $\E_2$-page of the $\alpha$-twisted descent
            spectral sequence~\eqref{eq:twisted Hesselholt SS} sends the
            canonical generator $1\in W=\H^0(W\mathscr{O}_X)$ to $\dlog(\alpha)$.
        \item[{\rm (b)}]
            If $X$ is a surface, then all other differentials in the twisted
            descent spectral sequence are zero.
    \end{enumerate}
\end{lemma}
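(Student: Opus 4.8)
The plan is to present the twisted descent spectral sequence~\eqref{eq:twisted Hesselholt SS} as the descent (hypercohomology) spectral sequence of a twisted form of the sheaf of spectra $\TR(\mathscr{O}_X)$ on $X$, and to isolate the effect of the Brauer twist on the single differential emanating from the unit. Since $\alpha$-twisted sheaves form a module category over $\Perfscr(X)$, the spectrum $\TR(X,\alpha)$ is a module over $\TR(X)$ and~\eqref{eq:twisted Hesselholt SS} is a module over the untwisted descent spectral sequence~\eqref{eq:Hesselholt SS}; this module structure will do most of the work in part~(b). The twist itself is visible through the units: by Hesselholt's theorem the homotopy sheaves of $\TR(\mathscr{O}_X)$ are $\pi_s=W\Omega^s_X$, so $\pi_0\,\mathrm{gl}_1\TR(\mathscr{O}_X)=(W\mathscr{O}_X)^\times$ and $\pi_1\,\mathrm{gl}_1\TR(\mathscr{O}_X)=W\Omega^1_X$, and the Teichm\"uller lift $\Gm\xrightarrow{[-]}(W\mathscr{O}_X)^\times$ carries $\alpha$ to a gerbe for $\mathrm{gl}_1\TR(\mathscr{O}_X)$ whose associated twisted free module is $\TR(\mathscr{O}_X,\alpha)$. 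Part~(a) is the crux.

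For part~(a), I would compute $d_2^\alpha$ on $1\in\H^0(W\mathscr{O}_X)$ by linearizing this multiplicative twist into $\pi_1=W\Omega^1_X$. On a cover trivializing $\alpha$ with cocycle $\{g_{ijk}\}$, the twisted sheaf is glued from copies of $\TR(\mathscr{O}_{U_i})$ with associativity defect given by the Teichm\"uller units $[g_{ijk}]\in(W\mathscr{O}_X)^\times$, and the obstruction to gluing the local unit sections through the first Postnikov stage is the image of this defect in $W\Omega^1_X$. The map sending a unit to this image is the logarithmic derivative $u\mapsto du/u$, where $d$ is the de Rham--Witt differential, which by Lemma~\ref{lem:compatible} arises from the $S^1$-action on $\TR(\mathscr{O}_X)$. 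The cohomological degree two of the target is supplied by $\alpha$ itself, so $d_2^\alpha(1)$ is the image of $\alpha$ under $\H^2$ of the sheaf map $\Gm\xrightarrow{[-]}(W\mathscr{O}_X)^\times\xrightarrow{d\log}W\Omega^1_X$, $f\mapsto d[f]/[f]$. This composite is precisely $\dlog$ of~\eqref{eq:dlog map}, whence $d_2^\alpha(1)=\dlog(\alpha)$ in $\H^2(W\Omega^1_X)$, as in~\eqref{eq:dlog map on cohomology}.

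For part~(b), note first that when $X$ is a surface the only differentials of~\eqref{eq:twisted Hesselholt SS} whose source and target both lie in the range $0\leq s,t\leq 2$ are the two $d_2$'s out of the bottom row, namely $\H^0(W\mathscr{O}_X)\to\H^2(W\Omega^1_X)$ and $\H^0(W\Omega^1_X)\to\H^2(W\Omega^2_X)$; all $d_r$ with $r\geq 3$, and all remaining $d_2$'s, vanish for degree reasons. The first is computed by part~(a), so it remains to show the second vanishes. Writing $\omega\in\H^0(W\Omega^1_X)$ as $\omega\cdot 1$ and using that the untwisted spectral sequence degenerates at $\E_2$ for a surface (Lemma~\ref{lem:E2degeneration}), the Leibniz rule for the module structure gives $d_2^\alpha(\omega)=\pm\,\omega\cup d_2^\alpha(1)=\pm\,\omega\cup\dlog(\alpha)$ in $\H^2(W\Omega^2_X)$. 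As $\alpha$ is a Brauer class it is torsion, so $\dlog(\alpha)$ is torsion and this cup product is torsion; but $\H^2(W\Omega^2_X)$ is torsion free (as already used in the proof of Lemma~\ref{lem:E2degeneration}), and therefore the differential vanishes.

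The main obstacle is the identification carried out in part~(a): one must verify that the multiplicative gluing defect $[g_{ijk}]\in(W\mathscr{O}_X)^\times$ contributes to the $\E_2$-differential $d_2^\alpha$ precisely through the logarithmic derivative into $\pi_1=W\Omega^1_X$. As $d_2$ depends only on the first Postnikov stage, this reduces to understanding the single $k$-invariant relating $\pi_0=W\mathscr{O}_X$ and $\pi_1=W\Omega^1_X$; pinning it down uses Hesselholt's computation of the multiplicative ring $\TR_*(\mathscr{O}_X)\cong W\Omega^*_X$ together with the identification in Lemma~\ref{lem:compatible} of the $S^1$-action with the de Rham--Witt differential, which force the unit-twist to linearize as $d[f]/[f]$.
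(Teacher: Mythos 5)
Your overall strategy for part (a) --- a direct \v{C}ech/Postnikov obstruction analysis of a twisted sheaf of spectra --- is a legitimate alternative to the paper's argument, and the shape of your answer is right. But the crux of your proof is exactly the step you wave through: the claim that the ``linearization'' of the multiplicative twisting data into $\pi_1=W\Omega^1_X$ is the logarithmic derivative $f\mapsto d[f]/[f]$, and that this is ``forced'' by Hesselholt's computation of $\TR_*(\Oscr_X)$ together with the $S^1$-action. It is not forced: multiplicativity and the de Rham--Witt differential alone cannot distinguish $\dlog$ from, say, the zero homomorphism, $-\dlog$, or a Frobenius twist of $\dlog$ --- all of these are homomorphisms from units to $W\Omega^1_X$ compatible with the structures you invoke (the paper even remarks in a footnote that the sign of $\dlog$ depends on a choice of orientation of the circle, so no purely structural argument can pin it down). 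What actually identifies the map is a genuine computation: it is the composite of the unit map $\Gm\iso\K_1(\Oscr_X)$ with the cyclotomic trace $\K_1(\Oscr_X)\to\TR_1(\Oscr_X)\iso W\Omega^1_X$, and the statement that this composite is $\dlog$ is precisely the Geisser--Hesselholt lemma cited in the paper \cite{geisser-hesselholt}. The paper's proof is engineered to supply this input without any direct obstruction analysis: it compares the twisted descent spectral sequence for $\TR$ with the descent spectral sequence for twisted \'etale $K$-theory via the trace map, where the identity $d_2^\alpha(1)=\alpha$ is already known \cite{antieau-cech}, and then pushes forward along $\K_1\to\TR_1$, which is $\dlog$. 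Your proof needs these same external inputs (or a re-proof of them); without them part (a) is an assertion, not an argument.

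There is also an imprecision in your setup that matters at exactly this point. In the categorical picture, the gerbe's $2$-cocycle $g_{ijk}$ enters the descent data for $U\mapsto\TR(\Perfscr(U,\alpha|_U))$ not as multiplication by the Teichm\"uller units $[g_{ijk}]\in(W\Oscr_X)^\times$ (a $\pi_0\,\mathrm{gl}_1$-level datum), but as the natural automorphism $g_{ijk}$ of the identity functor of $\Perfscr(U_{ijk})$, which $\TR$ converts into a self-homotopy of the identity map of $\TR(U_{ijk})$, i.e.\ a $\pi_1$-level datum, an element of $\TR_1(U_{ijk})$. The obstruction cocycle for gluing the unit section is literally this collection of loops, so the entire content of part (a) is the identification of the sheaf map $\Gm\to W\Omega^1_X$, $g\mapsto(\text{loop induced by }g)$, with $\dlog$; this is again the trace computation above, not a consequence of the ring structure. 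By contrast, your part (b) is fine as an alternative to the paper's: granting the module structure of the twisted spectral sequence over the untwisted one, the Leibniz rule, torsion-freeness of $\H^2(W\Omega^2_X)$, and torsionness of $\alpha$ (which holds since $X$ is smooth, hence regular) do kill the remaining $d_2$, where the paper instead observes directly that all differentials are torsion. Note, however, that your (b) uses part (a) and therefore inherits its gap, whereas the paper's (b) is independent of (a).
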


\begin{proof}
    Let $\K^\et(X,\alpha)$ denote the $\alpha$-twisted \'etale $K$-theory of
    $X$. There is a descent spectral sequence
    $$\E_2^{s,t}=\H^t_\et(X,\K_s(\Oscr_X))\Rightarrow\K^\et_{s-t}(X,\alpha).$$
    We also have natural isomorphisms $\ZZ\iso\K_0(\Oscr_X)$ and
    $\Gm\iso\K_1(\Oscr_X)$. The $d_2^\alpha$-differential
    $\H^0_\et(X,\ZZ)\rightarrow\H^2_\et(X,\Gm)$ sends $1$ to $\alpha$
    by~\cite[Theorem~8.5]{antieau-cech}. Now, the map
    $\K_1(\Oscr_X)\iso\Gm\rightarrow\TR_1(\Oscr_X)\iso W\Omega^1$ is given by
    the $\dlog$ map; see~\cite[Lemma~4.2.3]{geisser-hesselholt}.\footnote{Note that in Geisser--Hesselholt, the map is given by $-\dlog$, but this
    sign depends on a choice of the HKR isomorphism $\TR_1(\Oscr_X)\iso
    W\Omega^1_X$ which amounts to a choice of an orientation on the circle.}
    Thus, part (a) follows from the compatibility between the descent spectral
    sequences for $\K^\et(X,\alpha)$ and $\TR(X,\alpha)$ using the trace map
    $\K^\et(X,\alpha)\rightarrow\TR(X,\alpha)$ and especially the commutative
    diagram
    $$\xymatrix{
    \H^0_\et(X,\ZZ)\ar[r]^{d_2^\alpha}\ar[d]&\H^2(X,\Gm)\ar[d]^{\dlog}\\
    \H^0(W\Oscr_X)\ar[r]^{d_2^\alpha}&\H^2(W\Omega^1_X).
    }$$ For part (b), note that all of the differentials are torsion but that
    $\H^2(W\Omega^2_X)$ is torsion-free. Thus,
    $d_2^\alpha\colon\H^0(W\Oscr_X)\rightarrow\H^2(W\Omega^1_X)$ is the
    only possible non-zero differential for a surface.
\end{proof}

We conclude from the above that the descent spectral sequence for a twisted
surface $(X,\alpha)$ is degenerate at $\E_2$ if and only if $\dlog(\alpha)=0$,
and is always degenerate at $\E_3$ for degree reasons.

Let us now return to the situation where $X$ is a supersingular K3 surface. We record the following result.

\begin{lemma}\label{lem:SES for ssing K3}
    If $X$ is a supersingular K3 surface over an algebraically closed field $k$
    of positive characteristic $p$, then the sequence
    \[
      0\to\Br(X)\xrightarrow{\dlog}\H^2(W\Omega_X^1)\xrightarrow{1-F}\H^2(W\Omega^1_X)\to 0
    \]
    is exact, where the left arrow is the map on cohomology induced by~\eqref{eq:dlog map}.
\end{lemma}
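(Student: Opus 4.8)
The plan is to realize the displayed sequence as a fragment of the long exact cohomology sequence attached to the fundamental short exact sequence of \'etale sheaves
\[
0 \to W\Omega^1_{X,\log} \to W\Omega^1_X \xrightarrow{1-F} W\Omega^1_X \to 0,
\]
where $W\Omega^1_{X,\log}$ is the subsheaf of $F$-fixed (logarithmic) forms, that is, the kernel of $1-F$, and where surjectivity of $1-F$ on the \'etale sheaf $W\Omega^1_X$ is a theorem of Illusie. By construction the map~\eqref{eq:dlog map} factors through $W\Omega^1_{X,\log}$, which is generated \'etale-locally by the classes $\dlog[f]$. Since $W\Omega^1_X$ is quasicoherent over $W\Oscr_X$, its \'etale and Zariski cohomologies agree, so $\H^i_{\et}(X,W\Omega^1_X)=\H^i(W\Omega^1_X)$, and the long exact sequence reads
\[
\cdots \to \H^1(W\Omega^1_X) \xrightarrow{1-F} \H^1(W\Omega^1_X) \to \H^2_{\et}(X,W\Omega^1_{X,\log}) \xrightarrow{\iota} \H^2(W\Omega^1_X) \xrightarrow{1-F} \H^2(W\Omega^1_X) \to \H^3_{\et}(X,W\Omega^1_{X,\log}) \to \cdots.
\]

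Granting this, the assertion reduces to three points: (i) the comparison $\dlog\colon \Br(X)\to \H^2_{\et}(X,W\Omega^1_{X,\log})$ is an isomorphism; (ii) $\iota$ is injective, equivalently $1-F$ is surjective on $\H^1(W\Omega^1_X)$; and (iii) $1-F$ is surjective on $\H^2(W\Omega^1_X)$, equivalently $\H^3_{\et}(X,W\Omega^1_{X,\log})=0$. For (i) I would use that $X$ is supersingular, so $\rho(X)=22$ and the prime-to-$p$ part of $\Br(X)$ vanishes, giving $\Br(X)=\Br(X)[p^\infty]$; the $p$-primary part is then identified with the cohomology of the logarithmic de Rham--Witt sheaf through the flat--\'etale comparison of $\mu_{p^n}$ with $\nu_n(1)=W_n\Omega^1_{X,\log}$ (Illusie, Milne), passing to the limit over $n$ and matching symbol maps so that the identification carries $\dlog$ to $\iota$. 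For (ii), the group $\H^1(W\Omega^1_X)$ is the unit-root (slope $0$) part of the slope decomposition~\eqref{eq:slopes of crystalline coho}, hence a finitely generated $W$-module on which $F$ acts invertibly; over the algebraically closed field $k$ the operator $1-F$ is then surjective by the Artin--Schreier--Witt / Lang--Steinberg argument. For (iii), I would invoke Milne's duality $\H^3_{\et}(X,W\Omega^1_{X,\log})\cong \H^0_{\et}(X,W\Omega^1_{X,\log})^{\vee}$ for a surface together with the absence of global logarithmic $1$-forms on a K3 surface to conclude the vanishing.

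Alternatively, and this is perhaps the most self-contained route given the rest of the section, one can read the result off Illusie's explicit determination of the de Rham--Witt cohomology of a supersingular K3 surface in~\cite[II.7.2]{illusie-derham-witt}: there $\H^2(W\Omega^1_X)$ is described as an explicit module with its $F$-action, and one checks directly that $1-F$ is surjective and that its kernel is exactly the image of $\dlog$, which is isomorphic to $\Br(X)$.

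The main obstacle I expect is point (i): the clean identification of $\Br(X)$ with $\H^2_{\et}(X,W\Omega^1_{X,\log})$ compatibly with $\dlog$. This is where the supersingular hypothesis is genuinely used (to discard prime-to-$p$ Brauer classes) and where one must handle the flat-versus-\'etale comparison and the passage to the inverse limit over the truncation level $n$ with care, using coherence and Mittag--Leffler so that $\H^i(W\Omega^1_X)=\varprojlim_n \H^i(W_n\Omega^1_X)$ and so that $1-F$ on the limit behaves as on each level. The surjectivity statement (iii) is the second delicate point, but it is controlled by the duality and the vanishing of global $1$-forms.
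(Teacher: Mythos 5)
Your strategy is, in substance, the paper's own proof with its main citation unpacked: the long exact sequence you extract from $0\to W\Omega^1_{X,\log}\to W\Omega^1_X\xrightarrow{1-F}W\Omega^1_X\to 0$ together with Milne's comparison $\H^i_{\fl}(X,\mu_{p^n})\cong\H^{i-1}_{\et}(X,\nu_n(1))$ is exactly Illusie's Th\'eor\`eme II.5.5, which the paper invokes directly, phrased in terms of flat cohomology $\H^*(X,\mathbb{Z}_p(1))$ rather than your $\H^*_{\et}(X,W\Omega^1_{X,\log})$. Your step (ii) is the paper's appeal to Illusie's Lemme II.5.3 (though you should justify finite generation of $\H^1(W\Omega^1_X)$ by the general finiteness of $\H^j(W\Omega^i_X)$ for $j\le 1$, or by Illusie's computation for K3 surfaces, rather than by a slope argument --- slopes alone do not exclude infinitely generated torsion), and your step (iii), Milne duality against $\H^0_{\et}(X,\nu_n(1))=0$, is the paper's ``by flat duality, $\H^4(X,\mathbb{Z}_p(1))=0$'', since $\H^3_{\et}(X,\nu_n(1))\cong\H^4_{\fl}(X,\mu_{p^n})$. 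So the skeleton is correct and essentially identical to the paper's.

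The genuine gap is step (i), and your closing paragraph misdiagnoses what is missing there. Vanishing of the prime-to-$p$ Brauer group only gives $\Br(X)=\Br(X)[p^\infty]$; it does not identify $\Br(X)$ with $\varprojlim_n\H^2_{\et}(X,\nu_n(1))$. Kummer theory together with $\H^3_{\et}(X,\Gm)=0$ identifies $\H^2_{\et}(X,\nu_n(1))\cong\H^3_{\fl}(X,\mu_{p^n})$ with $\Br(X)/p^n$, with transition maps the natural surjections, so the inverse limit you need is the $p$-adic \emph{completion} of $\Br(X)$. No Mittag--Leffler or coherence argument prevents this completion from killing divisible subgroups; this is precisely why the lemma fails for K3 surfaces of finite height, where $\Br(X)[p^\infty]$ is $p$-divisible of positive corank and $\dlog$ has enormous kernel. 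The substantive input you are missing --- and what the paper cites from Artin's supersingular K3 paper --- is the flat-duality consequence that for supersingular $X$ the inverse system $\H^3_{\fl}(X,\mu_{p^n})$ is constant, equivalently that $\Br(X)$ is killed by $p$ (abstractly $\Br(X)\cong k$). With that in hand, your limit argument and the matching of the Kummer boundary with $\dlog$ close as you describe; without it, step (i) does not close. Your fallback route via Illusie's computation [II.7.2] has the same hole: the explicit module structure lets you verify that $1-F$ is surjective on $\H^2(W\Omega^1_X)$ and determine the size of its kernel, but identifying that kernel with $\dlog(\Br(X))$ is again Artin's theorem, not something one ``checks directly''.
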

\begin{proof}
  As $\H^1(W\Omega^1_X)$ is finitely generated, its endomorphism $1-F$ is
  surjective by \cite[Lemme II.5.3]{illusie-derham-witt}. By flat duality,
  $\H^4(X,\mathbb{Z}_p(1))=0$. By
  \cite[Th\'{e}or\`{e}me~II.5.5]{illusie-derham-witt} we therefore obtain a
  short exact sequence
  \[
    0\to\H^3(X,\mathbb{Z}_p(1))\to\H^2(W\Omega^1_X)\xrightarrow{1-F}\H^2(W\Omega^1_X)\to 0
  \]
  where as usual we put
  \[
    \H^3(X,\mathbb{Z}_p(1))\defeq \varprojlim \H^3(X,\mu_{p^n}).
  \]
  As a consequence of flat duality, Artin showed that this inverse system is constant, and hence the natural map
  \[
    \H^3(X,\mathbb{Z}_p(1))\xrightarrow{\sim}\H^3(X,\mu_p)
  \]
  is an isomorphism. Furthermore, the boundary map induced by the K\"ummer sequence gives an isomorphism
  \begin{equation}\label{eq:Br=mu}
    \Br(X)\xrightarrow{\sim}\H^3(X,\mu_p).
  \end{equation}
  For these facts, see the proof of \cite[Theorem 4.3]{MR0371899} on page 559.
  We thus find an isomorphism
  $\Br(X)\xrightarrow{\sim}\H^3(X,\mathbb{Z}_p(1))$. Using the definitions of
  the maps involved, one checks that the resulting map
  $\Br(X)\to\H^2(W\Omega^1_X)$ is the map on cohomology induced
  by~\eqref{eq:dlog map}.
\end{proof}

We remark that the isomorphism~\eqref{eq:Br=mu} implies $\Br(X)$ is
$p$-torsion; in fact, as recorded in \cite{MR0371899}, there is an abstract
isomorphism of groups $\Br(X)\cong k$.

Lemma \ref{lem:SES for ssing K3} implies in particular that $\dlog$ is
injective. The Brauer group of a supersingular K3 surface is non-trivial, so
combined with Lemma \ref{lem:differential in descent ss} we have produced
examples of twisted surfaces with non-degenerate descent spectral sequence. We record the $\E_2$ and $\E_3$ pages in the following figure.

\begin{figure}[H]
    \centering
    \begin{equation*}
        \tikz[
    overlay]{
        \draw[draw=black] (-.2,1.8)--(-.2,-2);
        \draw[draw=black] (-.8,-1.6)--(7,-1.6);
    }
        \begin{tikzcd}[column sep=small, row sep=small]
          \H^2(W\mathscr{O}_X)\arrow{r}{d}&\H^2(W\Omega^1_X)&\H^2(W\Omega^2_X)    \\
          0 &\H^1(W\Omega^1_X)& 0 \\
         \H^0(W\mathscr{O}_X)\arrow{uur}{d_2^{\alpha}}&0&0
        \end{tikzcd}
        \hspace{2cm}
        \tikz[
    overlay]{
        \draw[draw=black] (-.2,1.8)--(-.2,-2);
        \draw[draw=black] (-.8,-1.6)--(7,-1.6);
    }
        \begin{tikzcd}[column sep=small, row sep=small]
          \H^2(W\mathscr{O}_X)\arrow{r}{d}&\frac{\H^2(W\Omega^1_X)}{\dlog(\alpha)}&\H^2(W\Omega^2_X)    \\
          0 &\H^1(W\Omega^1_X)& 0 \\
            \mathrm{ord}(\alpha)W&0&0
        \end{tikzcd}
    \end{equation*}
    \caption{The $\E_2$ and $\E_3=\E_{\infty}$ pages of the descent spectral
    sequence for $(X,\alpha)$. The horizontal arrows are the maps induced by
the differentials appearing on the $\E_1$ page of slope spectral sequence for
$X$. The term $\mathrm{ord}(\alpha)W$ is the kernel of
the non-zero differential, which is generated by
$\mathrm{ord}(\alpha)\in W$, where $\mathrm{ord}(\alpha)=\mathrm{ord}(\dlog(\alpha))$ is the
order of $\alpha$.}
    \label{fig:twisted surface slope}
\end{figure}

As in the finite height case, we therefore have noncanonical isomorphisms
\begin{equation}\label{eq:TR computation}
  \TR_0(X)\cong\TR_0(X,\alpha)\cong\H^0(W\mathscr{O}_X)\oplus\H^1(W\Omega^1_X)\oplus\H^2(W\Omega^2_X),
\end{equation}
of $R^0$-modules, where we use $\mathrm{ord}(\alpha)$ to give an isomorphism between $W\iso\H^0(W\Oscr_X)$ and $\mathrm{ord}(\alpha)W$. We have a commuting diagram
\[
\begin{tikzcd}
  \H^2(W\mathscr{O}_X)\arrow{r}{d}\isor{d}{}&\dfrac{\H^2(W\Omega^1_X)}{\dlog\alpha}\isor{d}{}&\\
  \TR_{-2}(X,\alpha)\arrow{r}{d}&\TR_{-1}(X,\alpha)\arrow{r}{0}&\TR_0(X,\alpha)
\end{tikzcd}
\]
where the vertical arrows are induced by the descent spectral sequence, and the
right lower differential vanishes by Lemma \ref{lem:another little lemma}.
Finally, we have that $\TR_1(X,\alpha)=\TR_2(X,\alpha)=0$.

The differential $d$ is surjective, and we let $K(X,\alpha)$ denote its kernel, so that we have a short exact sequence
\[
  0\to K(X,\alpha)\to
  \H^2(W\mathscr{O}_X)\xrightarrow{d}\dfrac{\H^2(W\Omega^1_X)}{\dlog(\alpha)}\to
  0.
\]
We know that $K(X)=K(X,0)$ is a $k$-vector space of dimension $\sigma_0(X)$.
Recall from \cite[Corollary 3.4.23]{bragg-lieblich-twistor} that the Artin
invariant of a twisted supersingular K3 surface is given by
$\sigma_0(X,\alpha)=\sigma_0(X)+1$ if $\alpha\neq 0$ and
$\sigma_0(X,\alpha)=\sigma_0(X)$ otherwise. We conclude that $K(X,\alpha)$ is a
$k$-vector space of dimension $\sigma_0(X,\alpha)$. In particular, this shows
that the Artin invariant $\sigma_0(X,\alpha)$ is a derived invariant of
$(X,\alpha)$, which gives another proof of \cite[Corollary
3.4.3]{bragg-derived}.

The topological periodic cyclic homology $\TP(X,\alpha)$ is computed by the
Tate spectral sequence~\eqref{eq:Tate for TP}, whose $\E_2$ and $\E_3$ pages
are pictured in Figures~\ref{eq:Tate E2 for twisted TP} and~\ref{eq:Tate E3 for
twisted TP}.

\begin{figure}[H]
    \centering
    \begin{tikzcd}[row sep=small]
        \cdots&\TR_{0}(X,\alpha)            & 0      & \TR_{0}(X,\alpha) & 0      & \TR_{0}(X,\alpha)  & \cdots\\
        \cdots&\frac{\H^2(W\Omega^1_X)}{\dlog(\alpha)}\arrow{urr}{0} & 0      &
        \frac{\H^2(W\Omega^1_X)}{\dlog(\alpha)}\arrow{urr}{0}& 0 &
        \frac{\H^2(W\Omega^1_X)}{\dlog(\alpha)} & \cdots\\
        \cdots&\H^2(W\mathscr{O}_X)\arrow{urr}{d} & 0      & \H^2(W\mathscr{O}_X)\arrow{urr}{d}& 0      & \H^2(W\mathscr{O}_X)  & \cdots
    \end{tikzcd}
    \caption{A portion of the $\E_2$-page of the Tate spectral sequence for $(X,\alpha)$.}
\label{eq:Tate E2 for twisted TP}
\end{figure}
\begin{figure}[H]
    \centering
    \begin{tikzcd}[row sep=small]
        \cdots&\TR_{0}(X,\alpha)            & 0      & \TR_{0}(X,\alpha) & 0      & \TR_{0}(X,\alpha)  & \cdots\\
        \cdots& 0 & 0      & 0 & 0 & 0 & \cdots\\
        \cdots&K(X,\alpha) & 0      & K(X,\alpha)& 0      & K(X,\alpha)  & \cdots
    \end{tikzcd}
    \caption{A portion of the $\E_3=\E_{\infty}$-page of the Tate spectral sequence for $(X,\alpha)$.}
\label{eq:Tate E3 for twisted TP}
\end{figure}

We therefore find short exact sequences
\begin{equation}\label{eq:TP computation}
  0\to\TR_0(X,\alpha)\to\TP_i(X,\alpha)\to K(X,\alpha)\to 0
\end{equation}
of $W$-modules for all even $i$, and $\TP_i(X,\alpha)=\TP_i(X)=0$ for $i$ odd.
In particular, keeping track of the respective Frobenius actions as in the
previous section, we find for each $i\leq -1$ a short exact sequence
\begin{equation}\label{eq:TR and TP SES}
   0\to\TR_0(X,\alpha)(-i)\to\TP_{2i}(X,\alpha)\to K(X,\alpha)\to 0
\end{equation}
of $W$-modules, where the left hand arrow is a map of $F$-crystals.

If $\alpha=0$, then using~\eqref{eq:tricky twists 2} and~\eqref{eq:TR
computation} one checks that the inclusion $\TR_0(X)(-1)\to\TP_{-2}(X)$ above
is isomorphic to the inclusion of the Tate module of $\widetilde{\H}(X/W)$. The
cokernel of this inclusion is the same as the cokernel of the inclusion of the
Tate module of $\H^2(X/W)$. Thus, $K(X)$ is naturally identified with the
characteristic subspace associated to $X$ by Ogus \cite{MR563467}.

In general, one can show that for any $i\leq -1$ there is an isomorphism
\[
  \TP_{2i}(X,\alpha)\cong\widetilde{\H}(X/W,B)(i)
\]
of $F$-crystals, where $\widetilde{\H}(X/W,B)$ is the twisted K3 crystal
attached to $(X,\alpha)$ in \cite{bragg-derived}. As in the classical setting,
the construction of $\widetilde{\H}(X/W,B)$ depends on a non-canonical choice
of a $B$-field lift of $\alpha$, although the isomorphism class of the resulting
crystal is independent of this choice.
Furthermore, under this isomorphism, the inclusion
$\TR_0(X,\alpha)(-1)\to\TP_{-2}(X,\alpha)$ is identified with the inclusion of
the Tate module of $\widetilde{\H}(X/W,B)$, so that $K(X,\alpha)$ is identified
with the characteristic subspace associated to $(X,\alpha)$ defined in
\cite{bragg-derived}. In particular, as the dimension of $K(X,\alpha)$ is
determined by the $F$-crystal structure on $\TP_{2i}(X,\alpha)$, we see that if
$\alpha\neq 0$ then $\TP_{2i}(X)$ is not isomorphic to $\TP_{2i}(X,\alpha)$ as
an $F$-crystal for any $i\leq -1$, in contrast to the finite height case. We
remark that the derived Torelli theorem of \cite{bragg-derived} states that
$\Dscr^b(X,\alpha)$ is determined by the $F$-crystal $\widetilde{\H}(X/W,B)$
together with its Mukai pairing.

\addcontentsline{toc}{section}{References}

\small
\bibliographystyle{amsalpha}
\bibliography{main}

\vspace{20pt}
\scriptsize
\noindent
Benjamin Antieau\\
University of Illinois at Chicago\\
Department of Mathematics, Statistics, and Computer Science\\
851 South Morgan Street, Chicago, IL 60607\\
\texttt{benjamin.antieau@gmail.com}

\vspace{10pt}
\noindent
Daniel Bragg\\
UC Berkeley\\
Department of Mathematics\\
970 Evans Hall, Berkeley, CA 94720\\
\texttt{braggdan@math.berkeley.edu}

\end{document}